\renewcommand{\algocf@captiontext}[2]{#1\algocf@typo. \AlCapFnt{}#2} 
\def\@algocf@capt@plain{top}
\renewcommand{\algocf@makecaption}[2]{%
  \addtolength{\hsize}{\algomargin}%
  \sbox\@tempboxa{\algocf@captiontext{#1}{#2}}%
  \ifdim\wd\@tempboxa >\hsize
    \hskip .5\algomargin%
    \parbox[t]{\hsize}{\algocf@captiontext{#1}{#2}}
  \else%
    \global\@minipagefalse%
    \hbox to\hsize{\box\@tempboxa}
  \fi%
  \addtolength{\hsize}{-\algomargin}%
}
\def\T{{ \mathrm{\scriptscriptstyle T} }}
\providecommand{\cov}{\mathrm{ cov}}
\providecommand{\MDI}{\mathrm{\sc MDI}}
\providecommand{\E}{\mathrm{ E}}
\providecommand{\diag}{\mathrm{diag}}
\newcommand{\argmax}{\operatornamewithlimits{argmax}}
\begin{document}



\markboth{F. Bachoc, M.~G. Genton, K. Nordhausen, A. Ruiz-Gazen \and J. Virta}{Spatial blind source separation}

\title{Spatial blind source separation}

\author{Fran\c{c}ois BACHOC}
\affil{{Institut de Math\'ematiques de Toulouse, Universit\'e Paul Sabatier}, \\
118 route de Narbonne, 31062 Toulouse, France
\email{francois.bachoc@math.univ-toulouse.fr}}

\author{Marc G. GENTON}
\affil{Statistics Program, King Abdullah University of Science and Technology,\\
Thuwal 23955-6900, Saudi Arabia \email{marc.genton@kaust.edu.sa}}

\author{Klaus NORDHAUSEN}
\affil{CSTAT, Vienna University of Technology, \\Wiedner Hauptstr. 7, A-1040 Vienna, Austria
 \email{klaus.nordhausen@tuwien.ac.at}}

\author{Anne RUIZ-GAZEN}
\affil{Toulouse School of Economics, University {of} Toulouse Capitole, \\
21 all\'ee de Brienne, 31000 Toulouse, France  \email{anne.ruiz-gazen@tse-fr.eu}}

\author{\and Joni VIRTA}
\affil{{Department of Mathematics and Statistics, University of Turku, \\
		20014 Turun yliopisto, Finland, \\
		Department of Mathematics and Systems Analysis, Aalto University, \\
		PL 11000, 00076 AALTO, Finland.} \email{joni.virta@utu.fi}}

\maketitle

\begin{abstract}
Recently a blind source separation model was suggested for 
spatial data together with an estimator based on the simultaneous 
diagonalisation of two scatter matrices. The asymptotic properties of this estimator are derived here and a new estimator, based on the joint diagonalisation of more than two scatter matrices, is proposed. 
The {asymptotic} properties and merits of the novel estimator are verified in simulation studies. A real data example illustrates the method.
\end{abstract}

\begin{keywords}
{J}oint diagonalisation; {L}imiting distribution; 
{M}ultivariate random field; {S}patial scatter matrix.
\end{keywords}

\section{Introduction}

\thispagestyle{empty}

There is an abundance of multivariate data measured at spatial locations $ s_1,\ldots, s_n$ in a domain $\mathcal{S}^d \subseteq \mathbb{R}^d$. Such data {exhibit} two kinds of dependence{:} measurements taken closer to each other tend to be more similar than measurements taken further apart, and the variable values within a single location are likely to be correlated.

This complexity makes modelling multivariate spatial data computationally and theoretically difficult due to the large number of parameters required to represent the dependencies. In this work we address this problem through blind source separation, a framework established as independent component analysis for independent and identically distributed data and for stationary and non-stationary time series; see \citet{comon2010handbook} and  \citet{NordhausenOja2018}. Denoting a $p$-variate  random field as $ X( s)=\{X_{1}( s),\ldots,X_{p}( s)\}^\T$, where $^\T$ is the transpose operator, we assume that $ X(s) $ obeys the spatial blind source separation model introduced in \cite{NordhausenOjaFilzmoserReimann2015}. That is, $ X( s)$ at a location $ s$ is a linear mixture of an underlying $ p $-variate latent field $ Z( s) = \{Z_{1}( s),\ldots, Z_{p}(s)\}^\T$ with independent components,
\begin{equation} \label{eq:form:X:in:spatial:BSS}
X( s) =  \Omega  Z( s),
\end{equation}
where $ \Omega$ is an unknown $p \times p$  full rank matrix. {In this introduction section, we consider that the random fields $X$ and $Z$ have mean functions zero, for the sake of simplicity.}

{
When the observed random field $X$ takes the form \eqref{eq:form:X:in:spatial:BSS}, modeling and computational simplifications can be obtained.
Indeed, if no assumption at all is made on $X$, then the distribution of $X$ is characterized by $p$ covariance functions and {by} $p(p-1)/2$ cross-covariance functions. In contrast, when it is assumed that $X$ takes the form \eqref{eq:form:X:in:spatial:BSS}, then the distribution of $X$ is characterized by {$p$ covariance functions and by a $p \times p$ matrix.} As a function is an infinite-dimensional object, it is  more difficult to model and estimate than a fixed-dimensional matrix. Thus, when the observed random field $X$ takes the form \eqref{eq:form:X:in:spatial:BSS}, modeling simplifications are available.
}

{
When no assumption is made on $X$, a common practice in geostatistics is to let each of the $p$ covariance functions and each of the $p(p-1)/2$ cross-covariance functions of $X$ be characterized by $q$ parameters. For instance the case $q=2$ can correspond to a variance and a length scale parameter for an isotropic function. Then, the resulting $qp(p+1)/2$ parameters are usually estimated jointly by optimizing a fit criterion, typically the likelihood \citep{GentonKleiber2015}. This requires to perform an optimization in dimension $qp(p+1)/2$, where the computational cost of an evaluation of the likelihood is $O(p^3n^3)$. Once the $qp(p+1)/2$ parameters are estimated, the prediction of $X(s)$ for new values of $s$ can be performed at the computational cost  $O(p^3n^3)$.}

{
 In contrast, consider that {model} \eqref{eq:form:X:in:spatial:BSS} holds for $X$. We will show in this paper that an estimate of $\Omega^{-1}$ can be obtained. This is carried out by, first, computing scatter matrices with computational cost $O(p^2 n^2)$ and, second, performing an optimization in dimension $p^2$ where the computational cost of the function to be evaluated is $O(p^2)$, see {\S}~\ref{BSSest2} for details.  If each covariance function of $Z$ is characterized by $q$ parameters, each of them can be estimated separately, by optimizing the likelihood in dimension $q$. The evaluation cost of the likelihood is $O(n^3)$. Once the $qp$ covariance parameters are estimated, the prediction of $X(s)$ for new values of $s$ can be performed at cost $O( p n^3 )$. Indeed, the predictions of $Z_1(s),\ldots,Z_p(s)$ can be performed separately at cost $O(n^3)$ and  aggregated with negligible cost. 
 }

{Not all random fields $X$ obey a spatial blind source separation model of the form \eqref{eq:form:X:in:spatial:BSS}. For instance, \eqref{eq:form:X:in:spatial:BSS} forces the cross-covariance functions of $X$ to be symmetric. Nevertheless, {it} is a reasonable assumption in a fair number of practical situations \citep{NordhausenOjaFilzmoserReimann2015} and brings the computational benefits discussed above. Furthermore, an additional benefit of the form \eqref{eq:form:X:in:spatial:BSS} is dimension reduction. In blind source separation, often significantly fewer than the full $ p $ latent components are needed to capture the essential structure of the original observations and the remaining components can be discarded as noise.}

{We thus consider the spatial blind source separation model \eqref{eq:form:X:in:spatial:BSS} in this paper and focus on the estimation of $ \Omega^{-1} $. As discussed above, this estimation {} enables to estimate the cross-covariance functions of $X$ and to perform prediction.} Our approach for estimating $ \Omega^{-1} $  is based on the use of local covariance, or scatter, matrices, 
\begin{equation} \label{eq:hat:M:f}
\widehat{M}(f)=  n^{-1} \sum_{i=1}^n \sum_{j=1}^n f( s_i -  s_j)  X( s_i)  X( s_j)^\T,
\end{equation}
where $f: \mathbb{R}^d \to \mathbb{R}$ is called the kernel function. \citet{NordhausenOjaFilzmoserReimann2015} obtained estimators $\widehat{ \Gamma}(f)$ of $ \Omega^{-1} $ through a generalized eigendecomposition of pairs of local covariance matrices with kernels of the form $ (f_0, f_h) $, with $f_h( s_i -  s_j)=I(\| s_i -  s_j \| {\leq} h)$, for a positive constant $h$,  where $ I(\cdot)$ denotes the indicator function and $f_0( s)=I( s =0)$. Their estimators were based on the following definition, with $ f = f_h $ for some $ h > 0 $.

\begin{definition} \label{sBSSunmixest}
	An unmixing matrix estimator
	$\widehat{ \Gamma}(f)$ jointly diagonalizes $\widehat{M}(f_0)$ and $\widehat{M}(f)$
	in the following way
	\[
	\widehat{ \Gamma}(f) \widehat{M}(f_0) \widehat{ \Gamma}(f)^\T =  I_p \quad \mbox{and} \quad \widehat{ \Gamma}(f)\widehat{M}(f)
	\widehat{ \Gamma}(f)^\T =\widehat{ \Lambda}(f),
	\]
	where $\widehat{  \Lambda}(f)$ is a diagonal matrix with diagonal elements in decreasing order.
\end{definition}

This method is conceptually close to principal component analysis where latent variables that have maximal variance are found through the diagonalisation of the covariance matrix. However, since the covariance matrix does not capture spatial information, it was  extended to the concept of a local covariance matrix in \citet{NordhausenOjaFilzmoserReimann2015}. Analogously, diagonalising local covariance matrices then aims to find latent fields that maximize spatial correlation. 

Here, we expand on their work by not restricting the kernel $ f $ in Definition~\ref{sBSSunmixest} to be of the ``ball'' form $ f_h $. Furthermore, we derive the asymptotic behavior for the method proposed in \citet{NordhausenOjaFilzmoserReimann2015} for a large class of kernel functions $ f $.

{The idea when constructing these kernel functions is that {the mean values of $\widehat{M}(f)$ and $\widehat{M}(f_0)$ would be diagonal matrices if, in their definition, the mixed components $X$ were replaced by the latent components $Z$}. {Hence, a general blind source separation strategy is to undo the mixing in $X$ by finding a matrix $\widehat{\Gamma}(f)$ which simultaneously diagonalizes $\widehat{M}(f)$ and $\widehat{M}(f_0)$.} This is computationaly simple and can always be done exactly using generalized eigenvalue-eigenvector theory. From temporal blind source separation, it is however well known that when diagonalising only two matrices, the choice of the matrices can have a large impact on the separation efficiency. Therefore, it is a popular strategy to approximately diagonalize more than two matrices with the hope of including more information; see for example \cite{BelouchraniAbedMeraimCardosoMoulines:1997}, \cite{nordhausen2014robustifying}, \cite{MiettinenNordhausenOjaTaskinen:2014b}, \cite{matilainen2015new} and \cite{MiettinenIllnerNordhausenOjaTaskinenTheis2016}.
	Approximate diagonalization becomes then necessary as the matrices commute only at the population level but not when estimated using finite data. There are  many algorithms available for this purpose.  We use this idea to extend the method of \citet{NordhausenOjaFilzmoserReimann2015} to jointly diagonalize more than two local covariance matrices. 
	We also derive the asymptotic behaviour of these novel estimators.}

\section{Spatial blind source separation model} \label{BSSmodel}

\subsection{General assumptions}

In the spatial blind source separation model, the following assumptions are made:
\begin{assumption} \label{assumption:mean:zero:Z}
 $\E\{Z( s)\} =  0$ for $ s \in \mathcal{S}^d$;
 \end{assumption}
 \begin{assumption} \label{assumption:cov:IP:_Z}
 $\cov\{Z( s)\} =\E\{ Z( s) Z( s)^\T\}= I_p$;
 \end{assumption}
 \begin{assumption} \label{assumption:cov:D:Z}
  $\cov\{ Z( s_1),  Z( s_2)\}=\E\{ Z( s_1) Z( s_2)^\T\}= D( s_1, s_2)$, where $ D$ is a diagonal matrix whose diagonal elements depend only on $ s_1 -  s_2$. 
  \end{assumption}
Let  $\cov\{Z_k( s_i),Z_k( s_j)\}=
K_k( s_i- s_j)= D( s_i, s_j)_{k,k}$,
where $K_k$ denotes the stationary covariance function of $Z_k$, for  $k=1,\ldots,p$.

Assumption \ref{assumption:mean:zero:Z} is made for convenience and can easily be replaced by assuming a constant unknown mean (see Lemma \ref{lem:mean} in the supplementary material). Assumption \ref{assumption:cov:IP:_Z} says that the components of $ Z ( s)$ are uncorrelated and implies that the variances of the components are one, which reduces identifiability issues and comes without loss of generality.
Assumption~\ref{assumption:cov:D:Z} says that there is also no spatial cross-dependence between the components.
However, even after these assumptions are made, the model is not uniquely defined. The order of the latent fields and also their signs can be changed. This is common for all blind source separation approaches and is not considered a problem in practice.

\subsection{Identifiability}
\label{section:identifiability}

The expectations of $\widehat{M}(f)$ and $\widehat{M}(f_0)$ are {respectively}
\begin{equation}\label{eq:local_cov_population}
M(f)=  n^{-1}\sum_{i=1}^n \sum_{j=1}^n f( s_i - s_j) \E\{ X( s_i) X( s_j)^\T\}
\; \mbox{ and } \;
M(f_0)= n^{-1} \sum_{i=1}^n  \E\{ X( s_i) X( s_i)^\T\}.
\end{equation}
{Thus the empirical procedure of Definition \ref{sBSSunmixest}, operating on $\widehat{M}(f)$ and $\widehat{M}(f_0)$, can be associated to the following theoretical procedure, operating on $M(f)$ and $M(f_0)$.
}

\begin{definition} \label{sBSSunmix}
	For any function $f: \mathbb{R}^d \to \mathbb{R}$, an unmixing matrix functional
	$ \Gamma(f)$ is defined as a functional which jointly diagonalizes $M(f)$ and $M(f_0)$
	in the following way
	\[
	\Gamma(f)M(f_0) \Gamma(f)^\T =  I_p \quad \mbox{and} \quad  \Gamma(f) M(f) \Gamma(f)^\T =  \Lambda(f),
	\]
	where $ \Lambda(f)$ is a diagonal matrix with diagonal elements in decreasing order.
\end{definition}

We remark that an unmixing matrix $ \Gamma(f)$ can be found using the generalized eigenvalue-eigenvector theory. In addition, an unmixing matrix is never unique, since if $\Gamma(f)$ and $\Lambda(f)$ satisfy Definition \ref{sBSSunmix}, then $ S \Gamma(f)$ and $\Lambda(f)$ also satisfy Definition \ref{sBSSunmix} for any diagonal matrix $S$ with diagonal elements equal to $-1$ or $1$.
{We also remark that $\Lambda(f)$ is not the expectation of $\widehat{ \Lambda}(f)$, in general. Indeed, Definitions \ref{sBSSunmixest} and \ref{sBSSunmix} are {based on} non-linear functions of $\{\widehat{M}(f),\widehat{M}(f_0)\}$ and of $\{M(f),M(f_0)\}$.}

The usual notion of identifiability in blind source separation is that any unmixing functional $\Gamma(f)$ should recover the components of $Z$ up to signs and order of the components. Thus, any unmixing functional $\Gamma(f)$ should coincide with $\Omega^{-1}$, up to the order and signs of the rows.

\begin{definition} \label{def:identifiability:two:matrices}
We say that the unmixing problem given by $f$ is identifiable if any unmixing functional $\Gamma(f)$ satisfying Definition \ref{sBSSunmix} can be written as
$P S \Omega^{-1}$, where $P$ is a permutation matrix and $S$ is a diagonal matrix with diagonal elements equal to $-1$ or $1$.
\end{definition}

The motivation behind identifiability is that, if identifiability holds, then estimating $M(f_0)$ and $M(f)$ consistently by  $\widehat{M}(f_0)$ and $\widehat{M}(f)$ enables to obtain $\widehat{\Gamma}(f)$, which will be approximately equal to a matrix of the form $P S \Omega^{-1}$, with $P$ and $S$ as in Definition \ref{def:identifiability:two:matrices}.
 The following proposition provides a necessary and sufficient condition for identifiability. This proposition is proved in {{\S}~\ref{supplement:proof:identifiability:one}} of the supplementary material. All the other theoretical results in this paper are also proved in the supplementary material. Let $M^{-\T}$ denote the inverse of the transpose of $M$.

\begin{proposition} \label{prop:identifiability}
The unmixing problem given by $f$ is identifiable if and only if the diagonal elements of $\Omega^{-1} M(f) \Omega^{-\T}$ are {distinct}.
\end{proposition}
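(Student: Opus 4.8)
The plan is to reduce the statement to a standard fact about orthogonal matrices that simultaneously diagonalize a fixed diagonal matrix. First I would exploit the mixing structure to compute the two population scatter matrices explicitly. Writing $X(s)=\Omega Z(s)$ and using Assumptions~\ref{assumption:cov:IP:_Z}--\ref{assumption:cov:D:Z}, one has $\E\{X(s_i)X(s_j)^\T\}=\Omega D(s_i,s_j)\Omega^\T$ with $D(s_i,s_i)=I_p$, so that by \eqref{eq:local_cov_population}
\[
M(f_0)=\Omega\Omega^\T, \qquad M(f)=\Omega\,\Delta(f)\,\Omega^\T,
\]
where $\Delta(f)=n^{-1}\sum_{i=1}^n\sum_{j=1}^n f(s_i-s_j)D(s_i,s_j)$ is diagonal, being a sum of diagonal matrices. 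In particular $\Delta(f)=\Omega^{-1}M(f)\Omega^{-\T}$, so the diagonal elements of $\Delta(f)$ are exactly the quantities named in the statement, and it remains to prove that identifiability holds if and only if these are distinct.

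Next, for an arbitrary unmixing functional $\Gamma(f)$ I would introduce $B=\Gamma(f)\Omega$ and rewrite Definition~\ref{sBSSunmix} in terms of $B$. The constraint $\Gamma(f)M(f_0)\Gamma(f)^\T=I_p$ becomes $BB^\T=I_p$, i.e.\ $B$ is orthogonal, while $\Gamma(f)M(f)\Gamma(f)^\T=\Lambda(f)$ becomes $B\Delta(f)B^\T=\Lambda(f)$. Since $B$ is orthogonal, $\Lambda(f)$ and $\Delta(f)$ are similar diagonal matrices, so $\Lambda(f)$ is forced to be the decreasing rearrangement of the diagonal of $\Delta(f)$; moreover $\Gamma(f)=PS\Omega^{-1}$ for a signed permutation $PS$ if and only if $B=PS$, because $\Omega^{-1}$ is invertible. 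Thus the whole problem is recast as: for which $\Delta(f)$ is every orthogonal $B$ with $B\Delta(f)B^\T$ diagonal necessarily a signed permutation matrix?

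For the sufficiency direction I would assume the diagonal entries of $\Delta(f)$ are distinct and argue entrywise. From $B\Delta(f)=\Lambda(f)B$ one gets $B_{kl}\{\Delta(f)_{ll}-\Lambda(f)_{kk}\}=0$ for all $k,l$. Because $\Lambda(f)$ is a rearrangement of the distinct entries of $\Delta(f)$, for each $k$ there is a unique index $l$ with $\Delta(f)_{ll}=\Lambda(f)_{kk}$, so each row of $B$ has a single nonzero entry; orthonormality of the rows then forces that entry to be $\pm1$ and the nonzero positions to be distinct, whence $B$ is a signed permutation matrix and $\Gamma(f)=PS\Omega^{-1}$ as required by Definition~\ref{def:identifiability:two:matrices}.

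The converse is where the real work lies, and I expect the construction of an explicit counterexample to be the main obstacle. Assuming two diagonal entries of $\Delta(f)$ coincide, say at indices $k\neq l$, I would build a planar rotation $R$ acting as the identity outside the coordinates $\{k,l\}$ and as a rotation by an angle that is not a multiple of $\pi/2$ within them. Since $\Delta(f)$ restricted to that block is a scalar multiple of $I_2$, it commutes with $R$, so $R\Delta(f)R^\T=\Delta(f)$ stays diagonal. Composing with a fixed permutation $P_0$ that sorts the diagonal decreasingly, I would set $\Gamma(f)=P_0R\Omega^{-1}$ and verify that it satisfies Definition~\ref{sBSSunmix}, since $P_0RR^\T P_0^\T=I_p$ and $P_0\Delta(f)P_0^\T$ is diagonal with decreasing entries. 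The delicate point is to confirm that this genuinely violates identifiability: $P_0R$ has a row with two nonzero entries of modulus strictly less than one, so it cannot equal any signed permutation $PS$, and hence $\Gamma(f)\neq PS\Omega^{-1}$. This exhibits an admissible unmixing functional outside the allowed class, proving non-identifiability and completing the equivalence.
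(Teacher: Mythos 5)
Your proposal is correct and follows essentially the same route as the paper: both reduce to the observation that $B=\Gamma(f)\Omega$ is an orthogonal matrix diagonalising the diagonal matrix $\Omega^{-1}M(f)\Omega^{-\T}$, so that distinct diagonal entries force $B$ to be a signed permutation, while a repeated entry admits a non-trivial planar rotation (the paper uses the fixed $2\times 2$ block $2^{-1/2}\bigl(\begin{smallmatrix}1&1\\1&-1\end{smallmatrix}\bigr)$ where you use a general rotation). Your treatment is marginally more careful than the paper's in spelling out the entrywise argument for sufficiency and in inserting the sorting permutation $P_0$ so that $\Lambda(f)$ has decreasing diagonal, but these are refinements of detail, not a different method.
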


We remark that identifiability is a {joint} property of the kernel $f$ and the covariance functions $K_1,\ldots ,K_p$. For instance, consider the situation where $K_1,\ldots,K_p$ are compactly supported and equal to zero at distances larger than $0< r < \infty$, and where one uses the function $f( s)= I(r_1 <\| s\|\leq r_2)$, with $r \leq r_1 < r_2 < \infty$ as kernel. Then identifiability does not hold because $\Omega^{-1} M(f) \Omega^{-\T}$ is equal to the zero matrix. On the other hand, if $f$ is a ball kernel of the form $f( s)=I(\| s\|\leq r_0)$ with $r_0 >0$, then identifiability may hold, for the same covariance functions  $K_1,\ldots,K_p$.

Finally, for any kernel $f$, a necessary condition for identifiability is that there does not exist $k,l \in \{1,\ldots,p\}${, $ k \neq l $,} such that $K_k(s_i-s_j) = K_l(s_i-s_j)$ for all $i,j = 1, \ldots , n$. Indeed, if this was the case, then the diagonal elements $k$ and $l$ of $\Omega^{-1} M(f) \Omega^{-\T}$ would be equal, for any kernel $f$. 
An extreme example of this issue is $K_1 = \cdots = K_p$ with only Gaussian components. If this is the case, then, for any orthogonal matrix $Q$, the distribution of the random field $Q Z$ is the same as that of the random field $Z$. Hence, no statistical procedure can be expected to recover the components of $Z$, even up to signs and permutations, when only observing the transformed random field $X$.


\subsection{Relationships with other models of multivariate random fields}
\label{section:relationships}

The spatial blind source separation is notably different from the usual multivariate models for spatial data, which are often defined starting with their covariance functions contained in a cross-covariance matrix,
	\[
	C( s_1,  s_2) = \cov\{X( s_1), X( s_2)\} := \{C_{k,l}( s_1, s_2)\}_{k,l=1}^p,
	\]
	whereas our  approach for estimating $ \Omega^{-1} $ does not need to model or estimate the covariance functions of the latent fields $ Z_{1}( s),\ldots, Z_{p}(s) $.

	In a recent extensive review, \citet{GentonKleiber2015} discussed different approaches to define cross-covariance matrix functionals and gave a list of properties and conventions that they should satisfy, for instance stationarity and invariance under rotation. 
As \citet{GentonKleiber2015} pointed out, to create general classes of models with well-defined cross-covariance functionals is a major challenge.
Multivariate spatial models are particularly challenging as  many parameters need to be fitted. In textbooks such as \citet{Wackernagel2010} usually the following two popular models are described.

In the intrinsic correlation model it is assumed that the stationary covariance matrix $C( h)$ can be written as the product of the variable covariances and the spatial correlations, $C( h) = \rho( h)  T$, for all lags $ h$, where $ T$ is a non-negative definite $p \times p$ matrix and $\rho( h)$ a univariate spatial correlation function.

The more popular linear model of coregionalization is a generalization of the intrinsic correlation model,
and the covariance matrix then has the form
\[
C( h) = \sum_{k=1}^r \rho_{k}( h)  T_{k},
\]
for some positive integer $r\leq p$ with all the $\rho_{k}$'s being univariate spatial correlation functions and
$ T_{k}$'s being non-negative definite $p \times p$ matrices, often called coregionalization matrices.
Hence, with $r=1$ this reduces to the intrinsic correlation model.
The linear model of coregionalization implies a symmetric cross-covariance matrix.

Estimation in the linear model of coregionalization is discussed in several papers. \citet{GoulardVoltz1992} focused on the coregionalization matrices
using an iterative algorithm where the spatial correlation functions are assumed to be known. The algorithm was extended in \citet{Emery2010}. Assuming Gaussian random fields, an expectation-maximisation algorithm was suggested in \citet{Zhang2007} and a Bayesian approach was considered in \citet{Gelfand2004}.

There is a simple connection between the spatial blind source separation model and the linear model of coregionalization. 
The covariance matrix $C_{X}(h)$ resulting from a spatial blind source separation model is always symmetric and can be written as
\[
C_{X}(h) = \sum_{{k}=1}^p K_{{k}}(h) T_{k},
\]
with $T_{k} = \omega_{k} \omega_{k}^\T$, $\omega_{k}$ being the ${k}$th column of $\Omega$. Thus the spatial blind source separation model is a special case of the linear model of
coregionalization with $r = p$ and where all coregionalization matrices $T_{k}$,
${k}=1,\ldots,p$, are rank one matrices. \\

\section{Asymptotic properties for simultaneous diagonalisation of two matrices} \label{BSSest1}

Recall the definition \eqref{eq:hat:M:f} of a local covariance matrix and that 
\begin{equation} \label{eq:hat:M:fzero}
\widehat{M}(f_0)=  n^{-1} \sum_{i=1}^n   X( s_i)  X( s_i)^\T
\end{equation}
{is the covariance estimator. 
Asymptotic results can be derived for the previous estimators assuming that Assumptions 1 to 3 hold together with the following assumptions:}
\begin{assumption}
The coordinates $Z_{1},\ldots,Z_{p}$ of $ Z$ are stationary Gaussian processes on $\mathbb{R}^d$;
\end{assumption}
\begin{assumption} \label{assumption:expansion}
  A fixed  $\Delta>0$ exists so that, for all $n \in \mathbb{N}$ and, for all $i \neq j$, $i,j=1,\ldots,n$, $\| s_i -  s_j\| \geq \Delta$;
  \end{assumption}
\begin{assumption} \label{assumption:Kk}
  Fixed $A >0$ and $\alpha >0$ exist such that, for all $x \in \mathbb{R}^d$ and, for all $k=1,\ldots,p$,
\[
| K_k( x) | \leq \frac{ A }{ 1 + \| x\|^{d + \alpha} };
\]
\end{assumption}
\begin{assumption} \label{assumption:f}
Assuming Assumption \ref{assumption:Kk} holds, then for the same $A >0$ and $\alpha >0$ we have
\[
| f( x) | \leq \frac{ A }{ 1 + \| x\|^{d + \alpha} };
\]
\end{assumption}


\begin{assumption} \label{assumption:idendifiability:two:matrices:asymptotic}
We have
\[
\liminf_{n \to \infty}
\min_{i=2,\ldots,p} \left[
  \left\{  \Omega^{-1} M(f) \Omega^{-\T} \right\}_{i,i} 
  -
   \left\{  \Omega^{-1} M(f) \Omega^{-\T} \right\}_{i-1,i-1} 
    \right]
> 0.
\]
\end{assumption}

Assumption \ref{assumption:expansion} implies that $\mathcal{S}^d$ is unbounded as $n \to \infty$, which means that we address the increasing domain asymptotic framework \citep{cressie91statistics}.

Assumption \ref{assumption:f} holds in particular for the function $I( s=0)$ and for the ``ball'' and ``ring'' kernels $B(h)(s)= I(\| s\|\leq h)$ with fixed $h\geq 0$ and $R(h_1,h_2)(s)= I(h_1\leq \| s\|\leq h_2)$ with fixed $h_2 \geq h_1 \geq 0$. 

Up to reordering the components of $Z$, which comes without loss of generality,
Assumption~\ref{assumption:idendifiability:two:matrices:asymptotic}
is an asymptotic version of the identifiability condition in Proposition \ref{prop:identifiability}. Under Assumption \ref{assumption:idendifiability:two:matrices:asymptotic}, identifiability in the sense of Definition \ref{def:identifiability:two:matrices} holds for sufficiently large $n$, from Proposition \ref{prop:identifiability}.

{Proposition \ref{prop_cov_consistency} below gives  the consistency of the estimator $\widehat{M}(f)$, where $f$ satisfies Assumption \ref{assumption:f}. The proof of this proposition is provided in {\S}~\ref{appendix:two:matrices} of the supplementary material.}

\begin{proposition}\label{prop_cov_consistency}
{Suppose $n \to \infty$ and Assumptions \ref{assumption:mean:zero:Z} to \ref{assumption:Kk} hold and let $f: \mathbb{R}^d \to \mathbb{R}$ satisfy Assumption \ref{assumption:f}. Then $\widehat{M}(f)-M(f)\to 0$ in probability when $n \to \infty$.}
\end{proposition}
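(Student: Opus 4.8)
The plan is to exploit the identity $M(f) = \E\{\widehat{M}(f)\}$, which is immediate from the definitions \eqref{eq:hat:M:f} and \eqref{eq:local_cov_population}. Hence the claim is precisely the weak law of large numbers $\widehat{M}(f) - \E\{\widehat{M}(f)\} \to 0$ in probability, and since convergence in probability of a fixed-size matrix is equivalent to entrywise convergence, it suffices to treat each entry. First I would remove the mixing: because $X(s) = \Omega Z(s)$ with $\Omega$ a fixed matrix, we have $\widehat{M}(f) = \Omega \widehat{M}_Z(f) \Omega^\T$, where $\widehat{M}_Z(f) = n^{-1} \sum_{i,j} f(s_i - s_j) Z(s_i) Z(s_j)^\T$, and likewise $M(f) = \Omega\, \E\{\widehat{M}_Z(f)\}\, \Omega^\T$. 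It is therefore enough to show that each entry of $\widehat{M}_Z(f)$ converges in probability to its mean, after which multiplication by the fixed matrices $\Omega$ and $\Omega^\T$ transfers the conclusion back to $\widehat{M}(f)$. By Markov's inequality applied to the squared deviation, it suffices to prove that the variance of the $(k,l)$ entry of $\widehat{M}_Z(f)$ tends to zero.

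Next I would expand this variance as the four-fold sum
\[
\mathrm{var}\left[\{\widehat{M}_Z(f)\}_{k,l}\right]
= n^{-2} \sum_{i,j,i',j'} f(s_i - s_j)\, f(s_{i'} - s_{j'})\,
\cov\!\left\{ Z_k(s_i) Z_l(s_j),\, Z_k(s_{i'}) Z_l(s_{j'}) \right\}.
\]
By the Gaussianity of $Z$ (Assumption~4) and the zero mean (Assumption~1), Isserlis' theorem expresses the fourth-order covariance as
\[
\E\{Z_k(s_i) Z_k(s_{i'})\}\, \E\{Z_l(s_j) Z_l(s_{j'})\}
+ \E\{Z_k(s_i) Z_l(s_{j'})\}\, \E\{Z_l(s_j) Z_k(s_{i'})\},
\]
and the cross-component expectations vanish unless $k=l$ by Assumption~3. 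In either case one is left with products of the stationary covariances $K_k$ and $K_l$, each controlled by the decay bound of Assumption~\ref{assumption:Kk}, and $f$ is controlled by Assumption~\ref{assumption:f}. Both Isserlis terms have the same structure: the four indices $i,j,i',j'$ are coupled in a four-cycle, two edges carrying a factor $f$ and two carrying a factor $K$.

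The technical engine is a uniform summability lemma, which I would prove first from the minimum-separation Assumption~\ref{assumption:expansion}: a standard packing argument bounds the number of sites in any spherical annulus, so that for any $g$ with $|g(x)| \leq A/(1 + \|x\|^{d+\alpha})$ one has $\sup_n \sup_i \sum_{j=1}^n |g(s_i - s_j)| \leq C < \infty$. Granting this, I would bound the absolute four-fold sum as follows. For the first Isserlis term, bound one factor $|f(s_{i'}-s_{j'})| \leq A$ and factor the remaining summand, so that summing successively over $j'$ against $|K_l(s_j - s_{j'})|$, over $j$ against $|f(s_i - s_j)|$, and over $i'$ against $|K_k(s_i - s_{i'})|$ each contributes a bounded factor, leaving only the free sum over $i$, which yields a factor $n$; the total is $O(n)$. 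The second term is handled identically after the relabelling of the cycle. Consequently the variance is $O(n^{-1}) \to 0$, completing the argument. The main obstacle is precisely the uniform summability lemma together with the bookkeeping needed to peel off the four interlocking sums; the only genuine subtlety is the index (here $j'$) that appears simultaneously in two kernels, which is resolved cleanly by bounding one of them by its supremum $A$ before summing the other.
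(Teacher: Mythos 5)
Your proposal is correct, but it takes a more elementary route than the paper. The paper also reduces the claim to showing $\mathrm{var}\{\widehat{M}(f)_{k,l}\}\to 0$, but it does so by writing $\widehat{M}(f)_{k,l}=n^{-1}y^\T T_{k,l}(f)y$ as a Gaussian quadratic form, invoking the identity $\mathrm{var}(y^\T T y)=2\,\mathrm{tr}(RTR T)$, and then bounding the trace by $np\,\rho_1\{T_{k,l}(f)\}^2\lambda_1(R)^2$; the spectral norm of $T_{k,l}(f)$ is controlled via Gershgorin's theorem and the uniform row-sum bound (Lemma 4 of Furrer et al.), and $\lambda_1\{\mathrm{cov}(y)\}$ via a separate lemma resting on the same summability. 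Your version expands the same quantity by Isserlis' theorem and bounds the resulting four-fold sum directly by peeling off the indices one at a time against the summable kernels; this is in essence the same computation (the trace $\mathrm{tr}(RTRT)$, written out, is exactly your four-fold sum), but done by hand rather than through spectral bounds, and it dispenses with the eigenvalue lemma for $\mathrm{cov}(y)$ entirely. Your preliminary unmixing step $\widehat{M}(f)=\Omega\widehat{M}_Z(f)\Omega^\T$ is a clean simplification the paper does not use here (it instead absorbs $\Omega$ into a pointwise bound on the covariances of $X$). What the paper's heavier machinery buys is reusability: the matrices $T_{k,l}(f)$, the spectral bounds, and the quadratic-form formulation are exactly what feed into the central limit theorem (Theorem B.1 and Proposition B.2) later, whereas your argument, while shorter and self-contained for consistency alone, would have to be substantially redone to get asymptotic normality. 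Your identification of the one delicate point --- the index appearing in two kernels simultaneously, resolved by bounding one factor by its supremum --- is accurate, and the resulting $O(n^{-1})$ variance bound matches the paper's.
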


{We remark that $M(f)$ depends on $n$ and that we do not assume that the sequence of matrices $M(f)$ converges to a fixed matrix as $n \to \infty$. Hence, Proposition \ref{prop_cov_consistency} shows that $\widehat{M}(f)-M(f)$ converges to zero, and not that $\widehat{M}(f)$ converges to $M(f)$.}

{
Next, we show the joint asymptotic normality of $n^{1/2} \{\widehat{M}(f_0)-M(f_0)\}$ and $n^{1/2}\{\widehat{M}(f)-M(f)\}$, seen as sequences of $p^2 \times 1$ random vectors. 
Similarly as in Proposition~\ref{prop_cov_consistency}, we do not need to assume that the sequence of $2p^2 \times 2p^2$ covariance matrices of these two sequences of vectors converges to a fixed matrix. Hence, we will not show that these sequences of random vectors converge jointly to a fixed Gaussian distribution. Instead, we will show that the distances between the distributions of these random vectors and Gaussian distributions converge to zero as $n \to \infty$. As a distance between distributions, we consider a metric $d_w$  generating the topology of weak convergence on the set of Borel probability measures on Euclidean spaces (see, e.g., \cite{dudleyreal}, p. 393). The benefit of such a distance is that a sequence of distributions $(\mathcal{L}_n)_{n \in \mathbb{N}}$ converges to a fixed distribution $\mathcal{L}$ if and only if $d_w( \mathcal{L}_n , \mathcal{L} )$ converges to zero.
The next proposition provides the asymptotic normality result. It is proved in {\S}~\ref{appendix:two:matrices} of the supplementary material.} 

\begin{proposition}\label{prop_cov_asymptotic_normality}
{Assume the same assumptions as in Proposition \ref{prop_cov_consistency}. 
Let  $W(f)$ be the vector of size $p^2 \times 1$, defined  for $i = (a-1)p + b$, $a,b \in \{1,\ldots,p\}$, by 
\[
W(f)_i = {n}^{1/2}\{\widehat{M}(f)_{a,b} - M(f)_{a,b}\}.
\]
Let $Q_n$ be the distribution of $\{W(f)^\T , W(f_0)^\T\}^\T$.
Then, as $n \to \infty$,
\[
d_w[Q_n, \mathcal{N}\{0,V(f,f_0) \}]\to 0,
\]
where $\mathcal{N}$ denotes the normal distribution and details concerning the matrix $V(f,f_0)$ are given in Appendix \ref{appendix:expression:V}.
Furthermore, the largest eigenvalue of $V(f,f_0)$  is bounded as $n \to \infty$.}
\end{proposition}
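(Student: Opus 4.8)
The plan is to reduce the multivariate, matrix-valued statement to a scalar central limit theorem for a centered quadratic form in a Gaussian vector, and to control that quadratic form through its cumulants. First, writing $X=\Omega Z$ gives $\widehat{M}(f)=\Omega\,\widehat{M}_Z(f)\,\Omega^\T$ with $\widehat{M}_Z(f)=n^{-1}\sum_{i,j}f(s_i-s_j)Z(s_i)Z(s_j)^\T$, so that the stacked vector $\{W(f)^\T,W(f_0)^\T\}^\T$ is a fixed linear image, through a Kronecker factor built from $\Omega$, of the analogous vector formed from $Z$. Since a fixed linear map preserves both approximate normality and boundedness of the covariance spectrum, it suffices to treat the case of the independent fields $Z_1,\dots,Z_p$, whose joint law is Gaussian with block-diagonal covariance. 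By the Cram\'er--Wold device it then suffices to show, for every fixed $\lambda\in\mathbb{R}^{2p^2}$, that the scalar $S_n=\lambda^\T\{W(f)^\T,W(f_0)^\T\}^\T$ is asymptotically normal with a variance that stays bounded.

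Collecting all the field values into a single centered Gaussian vector $\xi=\{Z_k(s_i)\}$ with covariance $\Sigma_{(k,i),(l,j)}=\delta_{k,l}K_k(s_i-s_j)$, each such $S_n$ is a centered quadratic form $\xi^\T B\,\xi-\E(\xi^\T B\,\xi)$, where the symmetric matrix $B$ has entries proportional to $n^{-1/2}$ times values of $f$ or $f_0$. The engine of the proof is the exact cumulant formula for Gaussian quadratic forms, $\kappa_r(\xi^\T B\xi)=2^{r-1}(r-1)!\,\mathrm{tr}\{(B\Sigma)^r\}$ for $r\ge2$. I would first establish a single summability lemma from Assumptions \ref{assumption:expansion}, \ref{assumption:Kk} and \ref{assumption:f}: the minimum-distance condition bounds the number of design points in any ball, so the polynomial decay of $K_k$ and of $f$ yields $\sup_{n,i}\sum_{j}|K_k(s_i-s_j)|<\infty$, $\sup_{n,i}\sum_j |f(s_i-s_j)|<\infty$, and the convolution bound $\sup_{n,i,j}\sum_t|g_1(s_i-s_t)||g_2(s_t-s_j)|<\infty$ for any two such decaying kernels $g_1,g_2$.

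Armed with this lemma, $\mathrm{tr}\{(B\Sigma)^r\}$ is a sum of products of $r$ kernel factors and $r$ covariance factors over a cyclic chain of spatial indices, with overall prefactor $n^{-r/2}$. Summing the chain one index at a time, every summation is absorbed into an $O(1)$ factor by the summability and convolution bounds, except for a single free index that contributes $O(n)$; hence $|\mathrm{tr}\{(B\Sigma)^r\}|=O(n^{1-r/2})$. This gives at once the two conclusions: the variance $\kappa_2=2\,\mathrm{tr}\{(B\Sigma)^2\}=O(1)$ stays bounded, so the entries of $V(f,f_0)$---which are exactly the pairwise second-order trace quantities---are uniformly bounded and, the matrix being of fixed size $2p^2\times2p^2$, its largest eigenvalue is bounded; while for every $r\ge3$ we have $\kappa_r=O(n^{1-r/2})\to0$.

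Finally I would convert vanishing higher cumulants into the required $d_w$ statement. Because $V(f,f_0)$ need not converge, the target is a moving Gaussian, so I would argue along subsequences: from any subsequence extract a further one along which the variance of $S_n$ converges to some $\sigma^2$; convergence of all cumulants to those of $\mathcal{N}(0,\sigma^2)$ gives $S_n\Rightarrow\mathcal{N}(0,\sigma^2)$ there, while the Gaussian target $\mathcal{N}(0,\sigma_n^2)$ converges to the same limit, so that $d_w$ between the two tends to zero; as every subsequence admits such a sub-subsequence, $d_w\to0$ along the full sequence. Recombining the scalar statements through Cram\'er--Wold and mapping back by the fixed Kronecker factor yields the proposition, with $V(f,f_0)$ as described in Appendix \ref{appendix:expression:V}. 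I expect the main obstacle to be the bookkeeping in the trace bound: organizing the cyclic multi-index sum so that exactly one index remains free while all the others are controlled by the convolution estimate, uniformly in $n$, is where the minimum-separation and decay assumptions must be used with care.
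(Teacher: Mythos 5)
Your proposal is correct and follows the same overall architecture as the paper's proof: represent each entry of $\widehat{M}(f)$ as a quadratic form $n^{-1}y^\T T_{a,b}(f)y$ in a centered Gaussian vector, reduce to scalar statements by Cram\'er--Wold, handle the non-convergent covariance $V(f,f_0)$ by a subsequence-extraction argument against the moving Gaussian target, and derive all the needed uniform bounds from the minimum-separation condition combined with the polynomial decay of $K_k$ and $f$ (your summability lemma is exactly Lemma 4 of the cited Furrer et al.\ reference, and it also yields the Gershgorin bound $\rho_1\{T_{a,b}(f)\}=O(1)$ and $\lambda_1\{\cov(y)\}=O(1)$ that the paper uses). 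The one genuine divergence is the engine for the scalar central limit theorem: the paper diagonalizes $S_n=R_n^{1/2}(\sum_i a_iT_{i,n})R_n^{1/2}$, writes the quadratic form as a weighted sum of independent centred $\chi^2_1$ variables, and invokes the Lindeberg--Feller theorem (citing Istas and Lang), whereas you use the exact cumulant formula $\kappa_r=2^{r-1}(r-1)!\,\mathrm{tr}\{(B\Sigma)^r\}$ and kill all cumulants of order $r\ge3$ via the chain bound $|\mathrm{tr}\{(B\Sigma)^r\}|=O(n^{1-r/2})$. The two are essentially equivalent in this Gaussian setting (the normalized higher cumulants are controlled by the same ratio $\rho_1(S_n)^2/\mathrm{tr}(S_n^2)$ that drives the Lindeberg condition), but your route is self-contained and simultaneously delivers the boundedness of $V(f,f_0)$ from the $r=2$ case, at the cost of slightly heavier combinatorial bookkeeping in the cyclic trace sum. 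Two small remarks: your initial detour through $Z$ and the Kronecker factor $\Omega\otimes\Omega$ is harmless but unnecessary here --- the paper works directly with the vector $y$ built from $X$, whose covariance spectrum is bounded by its Lemma on $\cov(y)$, and reserves the equivariance reduction for the later propositions on $\widehat{\Gamma}$; and the claim that a fixed linear map preserves approximate normality against a \emph{moving} target is not automatic for a metric of weak convergence, so you should make explicit that it is justified by the same tightness-plus-subsequence argument you already use in the final step.
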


{
In Proposition \ref{prop_cov_asymptotic_normality},  $V(f,f_0)$ is a $2p^2 \times 2p^2$ matrix that depends on $n$ and is interpreted as an asymptotic covariance matrix. Also, in Proposition \ref{prop_cov_asymptotic_normality}, the vectors $W(f)$ and $W(f_0)$, that are asymptotically Gaussian, are obtained by row vectorization of $n^{1/2} \{\widehat{M}(f_0)-M(f_0)\}$ and $n^{1/2}\{\widehat{M}(f)-M(f)\}$.}
{Taking $f( s)= I(\| s\|\leq h)$ with $h>0$ in Propositions \ref{prop_cov_consistency} and \ref{prop_cov_asymptotic_normality}  gives the asymptotic properties of the method proposed in \citet{NordhausenOjaFilzmoserReimann2015}.}

\begin{remark}{
Propositions \ref{prop_cov_consistency} and \ref{prop_cov_asymptotic_normality} remain valid when centering the process $X$ by $\bar{X}=n^{-1} \sum_{i=1}^n X( s_i)$.} Indeed, we prove in Lemma~\ref{lem:mean} 
of the supplementary material that the difference between the centered  estimator and $\widehat{M}(f)$ is of order $O_p(n^{-1})$.
\end{remark}

{
For a matrix $A$  with rows $l_1^\T,\ldots,l_k^\T$, let $\mathrm{vect}( A) = (l_1^\T,\ldots,l_k^\T)^\T$ be the row vectorization of $ A $ and for a matrix $ A$ of size $k \times k$, let ${ \mathrm{diag}}( A) = ( A_{1,1},\ldots, A_{k,k})^\T $.}
{Next, Proposition \ref{prop_eigen} shows the joint asymptotic normality of the estimators $\widehat{ \Gamma}(f)$ and $\widehat{  \Lambda}(f)$.
This proposition is proved in {\S}~\ref{appendix:two:matrices} of the supplementary material.}

\begin{proposition} \label{prop_eigen}
{Assume the same assumptions as in Proposition \ref{prop_cov_consistency}.} 
Assume also that Assumption \ref{assumption:idendifiability:two:matrices:asymptotic} holds.
For $\widehat{ \Gamma}(f)$ and $\widehat{  \Lambda}(f)$ in Definition \ref{sBSSunmixest}, let $Q_n$ be the distribution of
\[
{n}^{1/2} \left(
\begin{array}{c}
\mathrm{vect} \left\{
\widehat{ \Gamma}(f)
-
\Omega^{-1}
\right\}
\\
\diag \left\{
\widehat{  \Lambda}(f)
-
{  \Lambda}(f)
\right\}
\end{array} \right).
\]
Then, we can choose $\widehat{ \Gamma}(f)$ and $\widehat{  \Lambda}(f)$ in Definition \ref{sBSSunmixest} so that when $n\to \infty${,}
\[
d_w\{Q_n , \mathcal{N}(  0, {F_1}) \} \to 0, 
\]
where details concerning the matrix $ {F_1}$ are given in Appendix \ref{appendix:F:two:matrices}.
\end{proposition}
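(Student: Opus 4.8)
Throughout write $G=\Gamma(f)=\Omega^{-1}$ and $\Lambda=\Lambda(f)$. The first observation is that the two population matrices are explicit: since $\E\{Z(s_i)Z(s_i)^\T\}=I_p$ and $\E\{Z(s_i)Z(s_j)^\T\}=D(s_i,s_j)$ is diagonal (Assumptions \ref{assumption:cov:IP:_Z}--\ref{assumption:cov:D:Z}), one gets from \eqref{eq:local_cov_population} that $M(f_0)=\Omega\Omega^\T$, a fixed positive-definite matrix not depending on $n$, and $M(f)=\Omega\,\Lambda(f)\,\Omega^\T$ with $\Lambda(f)=\Omega^{-1}M(f)\Omega^{-\T}=\diag\{n^{-1}\sum_{i,j}f(s_i-s_j)K_k(s_i-s_j)\}_{k=1}^p$ diagonal. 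Hence $(\Omega^{-1},\Lambda(f))$ is a solution of Definition \ref{sBSSunmix}, the generalized eigenvalues of the pair $(M(f),M(f_0))$ are exactly the diagonal entries of $\Lambda(f)$, and by Assumption \ref{assumption:idendifiability:two:matrices:asymptotic} their consecutive gaps are bounded below uniformly in $n$. Moreover the entries of $\Lambda(f)$ are bounded uniformly in $n$, by the same summability argument (Assumptions \ref{assumption:expansion}, \ref{assumption:Kk}, \ref{assumption:f}) that makes the largest eigenvalue of $V(f,f_0)$ bounded in Proposition \ref{prop_cov_asymptotic_normality}.

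The plan is then a uniform delta method for the generalized eigendecomposition. I would regard $(\widehat{\Gamma}(f),\widehat{\Lambda}(f))$ as the image of $(\widehat{M}(f),\widehat{M}(f_0))$ under the map sending a pair of matrices to its generalized eigenvectors and eigenvalues. Because $M(f_0)$ is uniformly positive definite with bounded norm and the eigenvalues $\lambda_1>\cdots>\lambda_p$ are simple with gaps bounded away from zero, this map is analytic on a neighborhood of $(M(f),M(f_0))$ whose radius and whose Jacobian bound can both be taken uniform in $n$. Whitening by $M(f_0)^{-1/2}$ reduces the problem to a symmetric eigenvalue problem and produces the standard first-order formulas: writing $\gamma_i^\T$ for the $i$th row of $\Omega^{-1}$ and $\dot A=\widehat{M}(f)-M(f)$, $\dot B=\widehat{M}(f_0)-M(f_0)$, the eigenvalue perturbation is $\gamma_i^\T(\dot A-\lambda_i\dot B)\gamma_i$, the off-diagonal eigenvector coefficients carry factors $1/(\lambda_i-\lambda_j)$, and the diagonal coefficient $-\tfrac12\gamma_i^\T\dot B\gamma_i$ comes from the $M(f_0)$-normalization. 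I would fix the row signs of $\widehat{\Gamma}(f)$ (the ordering being forced by the distinct eigenvalues) to match those of $\Omega^{-1}$; this resolves the sign ambiguity of Definition \ref{sBSSunmixest} and is exactly the freedom asserted by ``we can choose $\widehat{\Gamma}(f)$ and $\widehat{\Lambda}(f)$'', centering the expansion at $\Omega^{-1}$.

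Assembling the coordinates gives, with $W_n=(W(f)^\T,W(f_0)^\T)^\T$ the input vector of Proposition \ref{prop_cov_asymptotic_normality},
\[
n^{1/2}\left(\begin{array}{c}\mathrm{vect}\{\widehat{\Gamma}(f)-\Omega^{-1}\}\\ \diag\{\widehat{\Lambda}(f)-\Lambda(f)\}\end{array}\right)=L_n W_n + R_n,
\]
where $L_n$ is the Jacobian (a linear map with operator norm bounded uniformly in $n$) and $R_n=o_p(1)$. Since Proposition \ref{prop_cov_asymptotic_normality} gives $d_w[Q_n,\mathcal{N}\{0,V(f,f_0)\}]\to0$ and $d_w$ is a bounded-Lipschitz–type metric that contracts under Lipschitz maps, applying the uniformly bounded linear map $L_n$ sends $W_n$ to within $d_w$-distance $o(1)$ of $\mathcal{N}\{0,L_nV(f,f_0)L_n^\T\}=:\mathcal{N}(0,F_1)$ (Gaussians are preserved by linear maps). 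Finally I would absorb the additive $R_n=o_p(1)$ by a triangle inequality in $d_w$, using that $d_w$ metrizes weak convergence so that adding a term tending to zero in probability does not change the limiting law. This yields $d_w\{Q_n,\mathcal{N}(0,F_1)\}\to0$ with $F_1=L_nV(f,f_0)L_n^\T$, whose explicit form is recorded in Appendix \ref{appendix:F:two:matrices}.

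The main obstacle is uniformity over the triangular array: the usual eigenvalue delta method assumes the population matrices converge to a fixed limit, whereas here $M(f)$ genuinely varies with $n$. The crux is to show that the radius of analyticity of the eigendecomposition map and the bound on $L_n$ are uniform in $n$, which rests entirely on the uniform eigenvalue gap of Assumption \ref{assumption:idendifiability:two:matrices:asymptotic} together with the uniform boundedness and positive-definiteness of $M(f_0)=\Omega\Omega^\T$ and the uniform boundedness of $M(f)$; from these one obtains a uniform second-order Taylor bound and hence $R_n=o_p(1)$ via $\|W_n\|=O_p(1)$ and $n^{-1/2}\|W_n\|=o_p(1)$. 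A secondary delicacy is that, because the target Gaussian law $\mathcal{N}(0,F_1)$ itself depends on $n$, one cannot invoke the continuous mapping theorem; I would instead argue directly through the Lipschitz contraction property of $d_w$ under $L_n$ and the additive $o_p(1)$ remainder.
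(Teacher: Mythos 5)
Your proposal is correct, and its core --- a first-order linearization of the generalized eigendecomposition around $(M(f),M(f_0))$ pushed through the joint central limit theorem of Proposition~\ref{prop_cov_asymptotic_normality} --- is exactly the paper's delta-method strategy; your first-order coefficients (the $-\tfrac12\gamma_i^\T\dot B\gamma_i$ diagonal term, the $(\lambda_i-\lambda_j)^{-1}$ off-diagonal factors, and $\gamma_i^\T(\dot A-\lambda_i\dot B)\gamma_i$ for the eigenvalues) reproduce the expansion established in Lemma~\ref{lem:DL:proba:for:TCL}. Where you differ is in the scaffolding. The paper first reduces to the trivial-mixing case via the exact algebraic equivariance $\Gamma\{\widehat M(f_0),\widehat M(f)\}=\Gamma\{\widehat D_0,\widehat D(f)\}\,\Omega^{-1}$, performs the linearization on the $Z$-based scatter matrices, and only at the end conjugates by $\bar M_{\Omega^{-1}}$; you instead whiten by the population $M(f_0)^{-1/2}=(\Omega\Omega^\T)^{-1/2}$ and work directly in the mixed coordinates, which is equivalent bookkeeping. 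More substantively, the paper handles the non-convergent triangular array and the $n$-dependent Gaussian target by repeated subsequence extraction: along subsequences $D(f)$ converges, Lemma~4.3 of \citet{sun2002strong} combined with a row-sum sign convention yields consistency of the chosen eigenvector matrix, and Slutsky's lemma closes the argument. You argue instead through uniform-in-$n$ bounds on the radius of analyticity and the Jacobian of the eigendecomposition map, together with the Lipschitz contraction of a bounded-Lipschitz metric under the uniformly bounded linear map $L_n$ and a triangle inequality for the $o_p(1)$ remainder. Both routes rest on the same two uniformity inputs --- the eigenvalue gap of Assumption~\ref{assumption:idendifiability:two:matrices:asymptotic} and the boundedness of $M(f)$ --- and yours is arguably more quantitative; the one step you leave implicit that the paper makes explicit is the verification that, with probability tending to one, the (non-unique) estimator of Definition~\ref{sBSSunmixest}, after your sign normalization, coincides with the locally analytic selection on which the Taylor expansion is performed.
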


{In Proposition \ref{prop_eigen}, similarly as before, we consider the sequences of vectors obtained by vectorizing  $n^{1/2} \{\widehat{ \Gamma}(f) - \Omega^{-1}\}$ and taking the diagonal of $n^{1/2} \{ \widehat{  \Lambda}(f) - \Lambda(f) \}$. Again, we do not show that the sequence of joint distributions of these vectors converges to a fixed distribution. Instead, we show that these joint distributions are asymptotically close to Gaussian distributions, with covariance matrices given by ${F_1}$. We remark that ${F_1}$ denotes a sequence of $(p^2+p) \times (p^2 +p)$ matrices.
We also remark that, in Definition \ref{sBSSunmixest}, $\widehat{ \Gamma}(f)$ is not uniquely defined. It is defined up to the signs of its rows. Hence, Proposition \ref{prop_eigen} shows that there exists a choice of  the sequence $\widehat{ \Gamma}(f)$ in Definition \ref{sBSSunmixest} such that asymptotic normality holds as $n \to \infty$. 
}

{The performance of the estimators $\widehat{ \Gamma}(f)$ and $\widehat{  \Lambda}(f) $ depends on the choice of {$\widehat{M}(f)$} that should be chosen so that $\widehat{  \Lambda}(f)$ has diagonal elements as distinct as possible.} This is similar to the time series context as described in \citet{MiettinenNordhausenOjaTaskinen2012}. 
To avoid this dependency in the time series context, the joint diagonalisation of more than two matrices has been suggested 
and we will apply this concept  to the spatial context in the following section.

\section{Improving the estimation of the spatial blind source separation model by jointly diagonalising more than two matrices}\label{BSSest2}

Spatial blind source separation with more than two kernel functions of the form $f_0,f_1,\ldots,f_k$, with $k \geq 2$, can be formulated as

\begin{equation} \label{hat:gamma:k:matrices}
\widehat{\Gamma}
\in 
\argmax_{
\substack{
 \Gamma: \Gamma \widehat{M}(f_0) \Gamma^\T = I_p  \\
\Gamma \mbox{\small \; has rows } \gamma_1^\T,\ldots,\gamma_p^\T
} 
 }
 \sum_{l=1}^k \sum_{j=1}^p \{ \gamma_j^\T \widehat{M}(f_l) \gamma_j \}^2.
\end{equation}
We can show that, if $k=1$, the set of $\widehat{\Gamma}$ satisfying 
\eqref{hat:gamma:k:matrices} coincides with the set of $\widehat{\Gamma}(f_1)$ satisfying Definition \ref{sBSSunmixest}.
From experience in time series blind source separation \citep[see for example][]{MiettinenIllnerNordhausenOjaTaskinenTheis2016}, usually the diagonalisation of several matrices gives a better separation than those based on two matrices only.
In this paper, we indeed show that using $k \geq 2$ is beneficial from a theoretical point of view and in practice.

The identifiability notion of Definition \ref{def:identifiability:two:matrices} and Proposition \ref{prop:identifiability} can be extended to the case of more than two local covariance matrices. We first remark that the theoretical version of \eqref{hat:gamma:k:matrices} is
\begin{equation} \label{gamma:k:matrices}
\Gamma
\in 
\argmax_{
\substack{
 \Gamma: \Gamma M(f_0) \Gamma^\T = I_p  \\
\Gamma \mbox{\small \; has rows } \gamma_1^\T,\ldots,\gamma_p^\T
} 
 }
 \sum_{l=1}^k \sum_{j=1}^p \{ \gamma_j^\T M(f_l) \gamma_j \}^2.
\end{equation}

We then extend Definition~\ref{def:identifiability:two:matrices} and Proposition~\ref{prop:identifiability} to the case of more than two local covariance matrices.

\begin{definition} \label{def:identifiability:k:matrices}
We say that the unmixing problem given by $f_1,\ldots,f_k$ is identifiable if any unmixing functional $\Gamma$ satisfying \eqref{gamma:k:matrices} can be written as
$P S \Omega^{-1}$, where $P$ is a permutation matrix and $S$ is a diagonal matrix with diagonal elements equal to $-1$ or $1$.
\end{definition}

\begin{proposition} \label{prop:identifiability:k:matrices}
The unmixing problem given by $f_1,\ldots,f_k$ is identifiable if and only if for every pair $ i \neq j $, $i, j = 1, \ldots, p$, there exists $ l = 1, \ldots , k$ such that $   \{\Omega^{-1} M(f_l) \Omega^{- \T}\}_{i,i} \neq  \{\Omega^{-1} M(f_l) \Omega^{- \T}\}_{j,j}  $.
\end{proposition}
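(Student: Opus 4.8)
The plan is to reparametrize \eqref{gamma:k:matrices} so that it becomes an optimization over the orthogonal group whose maximizers can be read off directly. First I would note that, under Assumptions \ref{assumption:mean:zero:Z}--\ref{assumption:cov:D:Z}, one has $\E\{X(s_i)X(s_j)^\T\} = \Omega D(s_i,s_j)\Omega^\T$ with $D(s_i,s_j)$ diagonal, so that $M(f_0) = \Omega\Omega^\T$ and $M(f_l) = \Omega D_l \Omega^\T$, where $D_l := \Omega^{-1} M(f_l)\Omega^{-\T} = n^{-1}\sum_{i,j} f_l(s_i-s_j) D(s_i,s_j)$ is diagonal, with diagonal entries $d_{l,1},\ldots,d_{l,p}$. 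Writing $U := \Gamma\Omega$, the constraint $\Gamma M(f_0)\Gamma^\T = I_p$ is equivalent to $U U^\T = I_p$, and if $u_1^\T,\ldots,u_p^\T$ denote the rows of $U$ then $\gamma_j^\T M(f_l)\gamma_j = u_j^\T D_l u_j$. Hence \eqref{gamma:k:matrices} is equivalent to maximizing $g(U) := \sum_{l=1}^k \sum_{j=1}^p (u_j^\T D_l u_j)^2$ over the orthogonal group, and identifiability in the sense of Definition \ref{def:identifiability:k:matrices} holds if and only if every maximizer $U$ is a signed permutation matrix. In this notation the stated condition reads: the signatures $(d_{1,m},\ldots,d_{k,m})$, $m=1,\ldots,p$, are pairwise distinct.

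The crux is to characterize the maximizers of $g$. For each $l$ the matrix $U D_l U^\T$ is symmetric with the same eigenvalues as $D_l$; its diagonal entries are exactly the $u_j^\T D_l u_j$, and since $\mathrm{tr}\{(U D_l U^\T)^2\} = \mathrm{tr}(D_l^2) = \sum_m d_{l,m}^2$ one obtains $\sum_j (u_j^\T D_l u_j)^2 \le \sum_m d_{l,m}^2$, with equality if and only if $U D_l U^\T$ is diagonal. Summing over $l$ yields $g(U) \le \sum_{l,m} d_{l,m}^2 =: G^*$, a bound attained by every signed permutation matrix; thus $G^*$ is the maximum, and $U$ is a maximizer if and only if $U D_l U^\T$ is diagonal for all $l=1,\ldots,k$, that is, $U$ simultaneously diagonalizes $D_1,\ldots,D_k$. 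I expect this equality analysis, which reduces the optimization to exact joint diagonalization, to be the main step.

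It then remains to identify the orthogonal simultaneous diagonalizers. From $U D_l U^\T$ diagonal one checks that each row $u_j$ of $U$ is a common eigenvector of all the $D_l$, so that its support is contained in a set of indices on which every $d_{l,\cdot}$ is constant, i.e. a block of indices sharing a common signature. For the ``if'' direction I would use that, when the signatures are pairwise distinct, every such block is a singleton, forcing $u_j = \pm e_{\sigma(j)}$ for some index $\sigma(j)$; orthonormality of the rows of $U$ then makes $\sigma$ a permutation, so $U = PS$ is a signed permutation and $\Gamma = U\Omega^{-1}$ has the required form. For the ``only if'' direction I would argue by contraposition: if two signatures coincide, say for indices $i_0\neq j_0$, then a Givens rotation of angle $\theta\notin(\pi/2)\mathbb{Z}$ in the $(i_0,j_0)$-plane commutes with every $D_l$ (whose corresponding $2\times 2$ block is scalar), hence is a maximizer that is not a signed permutation, so identifiability fails. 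This recovers Proposition \ref{prop:identifiability} in the special case $k=1$.
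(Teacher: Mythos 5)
Your proposal is correct and follows essentially the same route as the paper: after the reparametrisation $U=\Gamma\Omega$, the paper likewise bounds the objective by $\sum_{l}\mathrm{tr}\{D(f_l)^2\}$ via the Frobenius-norm identity (at the end of the proof of its consistency lemma), characterises maximizers as orthogonal joint diagonalizers whose rows are common eigenvectors with singleton supports under distinct signatures, and for the converse exhibits an explicit $2\times 2$ orthogonal block (a $45$-degree rotation/reflection, a special case of your Givens rotation) when two signatures coincide. No gaps.
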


{ Proposition \ref{prop:identifiability:k:matrices} is proved in {\S}~\ref{supplement:proof:identifiability:two}} of the supplementary material.
We remark that the identifiability condition in Proposition \ref{prop:identifiability:k:matrices} is weaker than that in Proposition \ref{prop:identifiability}, because if the condition in Proposition \ref{prop:identifiability} holds with $f$ being one of the $f_1,\ldots,f_k$, then the condition in Proposition \ref{prop:identifiability:k:matrices} holds. This is one of the benefits of jointly diagonalising more than two matrices.

One of the main theoretical contributions of this paper is to provide an asymptotic analysis of the joint diagonalisation of several matrices in the spatial context.  Assumption \ref{assumption:idendifiability:two:matrices:asymptotic}, on asymptotic identifiability, can be replaced by the following weaker assumption.

\begin{assumption} \label{assumption:identifiability:k:matrices}
A fixed $\delta >0$ and $n_0 \in \mathbb{N}$ exist so that for all $n \in \mathbb{N}$, $n \geq n_0$, for every pair $ i \neq j $, $i, j = 1, \ldots, p$, there exists $ l = 1, \ldots , k$, such that $ |  \{
\Omega^{-1} M(f_l) \Omega^{- \T}\}_{i,i} - \{\Omega^{-1} M(f_l) \Omega^{- \T}\}_{j,j} | \geq \delta $.
\end{assumption}

{In the next proposition, we prove the consistency of $\widehat{\Gamma}$. This proposition is proved in {\S}~\ref{appendix:more:two:matrices} of the supplementary material.}

\begin{proposition} \label{prop:consistency:k:matricesb}
Suppose Assumptions \ref{assumption:mean:zero:Z} to \ref{assumption:Kk} hold.
Let $k \in \mathbb{N}$ be fixed. Let $f_1,\ldots,f_k: \mathbb{R}^d \to \mathbb{R}$ satisfy Assumption \ref{assumption:f}. 
Assume that Assumption \ref{assumption:identifiability:k:matrices} holds.
Let $\widehat{\Gamma} = \widehat{\Gamma}\{\widehat{M}(f_0),\widehat{M}(f_1),\ldots,\widehat{M}(f_k)\}$ satisfy \eqref{hat:gamma:k:matrices}.
Then we can choose $\widehat{\Gamma}$ so that $\widehat{\Gamma} \to  \Omega^{-1}$ in probability when $n$ goes to infinity.
\end{proposition}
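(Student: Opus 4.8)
The plan is to pass to the latent coordinates, absorb the data-dependent constraint into the orthogonal group, and then run an argmax-consistency argument; the one genuine subtlety is that the population criterion depends on $n$ and is not assumed to converge, so I will need a \emph{uniform} separation of its maximizer. First I would substitute $X = \Omega Z$ into \eqref{eq:hat:M:f}, which gives $\widehat{M}(f_l) = \Omega \widehat{M}_Z(f_l) \Omega^\T$ and $M(f_l) = \Omega M_Z(f_l) \Omega^\T$ with $\widehat{M}_Z(f_l) = \Omega^{-1}\widehat{M}(f_l)\Omega^{-\T}$ and $M_Z(f_l) = \Omega^{-1} M(f_l) \Omega^{-\T}$. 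By Assumptions \ref{assumption:cov:IP:_Z}--\ref{assumption:cov:D:Z}, $M_Z(f_l)$ is diagonal with entries $\lambda_{l,k} = n^{-1}\sum_{i,j} f_l(s_i - s_j) K_k(s_i - s_j)$, and $M_Z(f_0) = I_p$. Writing $\widehat{U} = \widehat{\Gamma}\Omega$, the constraint $\widehat{\Gamma}\widehat{M}(f_0)\widehat{\Gamma}^\T = I_p$ becomes $\widehat{U}\widehat{M}_Z(f_0)\widehat{U}^\T = I_p$ and each summand becomes $\gamma_j^\T \widehat{M}(f_l)\gamma_j = u_j^\T \widehat{M}_Z(f_l) u_j$, where $u_j^\T$ is the $j$th row of $U$. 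Since $\widehat{M}(f_l) - M(f_l)\to 0$ in probability for $l = 0,\ldots,k$ by Proposition \ref{prop_cov_consistency}, conjugation by the fixed matrix $\Omega^{-1}$ gives $\widehat{M}_Z(f_l) - M_Z(f_l)\to 0$ in probability, and in particular $\widehat{M}_Z(f_0)\to I_p$. Thus proving $\widehat{\Gamma}\to\Omega^{-1}$ is equivalent to proving $\widehat{U}\to I_p$, and because \eqref{hat:gamma:k:matrices} is invariant under left multiplication by a signed permutation, I am free to relabel and sign-flip the rows of $\widehat{\Gamma}$ when making the choice asserted in the statement.

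Next I would absorb the random constraint into the orthogonal group. As $\widehat{M}_Z(f_0)$ is symmetric positive definite with probability tending to one, any feasible $\widehat{U}$ factors as $\widehat{U} = \widehat{O}\,\widehat{M}_Z(f_0)^{-1/2}$ with $\widehat{O}$ orthogonal, and the objective becomes $\widehat{G}(\widehat{O}) = \sum_{l=1}^k\sum_{j=1}^p (\hat{o}_j^\T \widetilde{M}_l \hat{o}_j)^2$ with $\widetilde{M}_l = \widehat{M}_Z(f_0)^{-1/2}\widehat{M}_Z(f_l)\widehat{M}_Z(f_0)^{-1/2}$, where $\hat{o}_j^\T$ is the $j$th row of $\widehat{O}$. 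Hence $\widehat{O}$ maximizes $\widehat{G}$ over the \emph{fixed} compact group $O(p)$, and $\widetilde{M}_l - M_Z(f_l)\to 0$ in probability. Since the entries of an orthogonal matrix are bounded by one, and the entries of $\widetilde{M}_l$ and $M_Z(f_l)$ are uniformly bounded in $n$ (the bound on $\lambda_{l,k}$ follows from Assumptions \ref{assumption:expansion}, \ref{assumption:Kk} and \ref{assumption:f} by the summability estimate used in Proposition \ref{prop_cov_consistency}), the map $O\mapsto \sum_{l,j}(o_j^\T A_l o_j)^2$ is Lipschitz in $(A_l)_l$ uniformly over $O\in O(p)$. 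This yields $\sup_{O\in O(p)} |\widehat{G}(O) - G_n(O)|\to 0$ in probability, where $G_n(O) = \sum_{l=1}^k\sum_{j=1}^p (o_j^\T M_Z(f_l) o_j)^2$ is the population criterion.

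The heart of the proof is a quantitative, uniform-in-$n$ identifiability lemma for $G_n$. Setting $a_{jm} = o_{jm}^2$, the matrix $A = (a_{jm})$ is doubly stochastic and $G_n(O) = \sum_{l,j}(\sum_m \lambda_{l,m} a_{jm})^2$ is convex in $A$, being a sum of squares of linear forms; hence its maximum over the Birkhoff polytope is attained at a vertex, i.e. a permutation matrix, with common value $\sum_l\|M_Z(f_l)\|_F^2$. Equivalently, since $\|O M_Z(f_l) O^\T\|_F$ is orthogonally invariant, maximizing $G_n$ is the same as minimizing $\sum_l \|\off\{O M_Z(f_l) O^\T\}\|_F^2$, whose minimum value zero forces $O$ to simultaneously diagonalize all $M_Z(f_l)$; under Assumption \ref{assumption:identifiability:k:matrices} this forces $O$ to be a signed permutation, by the argument proving Proposition \ref{prop:identifiability:k:matrices}. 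To upgrade this to uniform separation I would use compactness: the admissible diagonal tuples lie in a fixed compact set, with bounded entries and, for each pair $i\neq j$, some $l$ satisfying $|\lambda_{l,i}-\lambda_{l,j}|\geq\delta$; were the separation to fail, a convergent subsequence of tuples and of orthogonal matrices bounded away from the signed permutations would yield a limit on which the off-diagonal criterion vanishes, contradicting the exact characterization. Consequently, for every $\varepsilon>0$ there is $\eta>0$, independent of $n$, with $G_n(O)\leq \max_{O(p)} G_n - \eta$ whenever $O$ lies at distance at least $\varepsilon$ from the set of signed permutations.

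Finally I would combine the uniform convergence with the uniform separation: on the event $\sup_{O}|\widehat{G} - G_n|<\eta/2$, which has probability tending to one, the maximizer $\widehat{O}$ must lie within $\varepsilon$ of the signed permutations. Taking $\varepsilon$ smaller than half the (fixed) minimal Frobenius distance between distinct signed permutation matrices, $\widehat{O}$ is then close to a unique such matrix $P S$; left-multiplying $\widehat{\Gamma}$ by $(PS)^\T$, which preserves feasibility and the objective in \eqref{hat:gamma:k:matrices}, selects the branch for which $\widehat{O}\to I_p$. Unwinding the reparametrizations gives $\widehat{U} = \widehat{O}\,\widehat{M}_Z(f_0)^{-1/2}\to I_p$ and hence $\widehat{\Gamma} = \widehat{U}\Omega^{-1}\to\Omega^{-1}$ in probability. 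The main obstacle is precisely the third step: because $M_Z(f_l)$ varies with $n$ and need not converge, no off-the-shelf consistency theorem with a fixed limiting criterion applies, and one must establish that the maximizer of $G_n$ is \emph{uniformly} well separated, which is exactly where the uniform constant $\delta$ of Assumption \ref{assumption:identifiability:k:matrices} and the uniform boundedness of the latent local covariances enter.
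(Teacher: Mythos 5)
Your proposal is correct and follows essentially the same route as the paper's proof (Lemma B.5.1 and Proposition B.5.4 in the supplement): reduction to the latent coordinates by equivariance, reparametrization over the orthogonal group, uniform convergence of the criterion, and the characterization of the population maximizers as signed permutations via the Frobenius/off-diagonal decomposition and the shared-eigenvector argument under Assumption 9. The only difference is organizational: the paper handles the non-convergent $n$-dependent criterion by assuming the maximizer fails to converge and extracting a subsequence along which $D(f_l)\to D_\infty(f_l)$, whereas you first prove a deterministic uniform-in-$n$ separation of the maximizer of $G_n$ by the same compactness argument — these are equivalent.
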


{In Proposition \ref{prop:consistency:k:matricesb}, we remark that $\widehat{\Gamma}$ is defined only up to permutation of the rows and multiplications of them by $1$ or $-1$. Hence, we show that there exists a choice of a sequence $\widehat{\Gamma}$ that converges to $\Omega^{-1}$. The next proposition provides an asymptotic normality result. It is proved in {\S}~\ref{appendix:more:two:matrices} of the supplementary material.}

\begin{proposition}\label{asymp_bssmulti}
Assume the same assumptions as in Proposition \ref{prop:consistency:k:matricesb}.
Let
$(\widehat{\Gamma}_n)_{n \in \mathbb{N}}$ be any sequence of  $p \times p$ matrices so that for any $n \in \mathbb{N}$, $\widehat{\Gamma}_n = \widehat{\Gamma}_n\{\widehat{M}(f_0),\widehat{M}(f_1),\ldots,\widehat{M}(f_k)\}$  satisfies {\eqref{hat:gamma:k:matrices}}. Then, a sequence of permutation matrices $(P_n)$ and a sequence of diagonal matrices $(D_n)$ exist, with diagonal components in $\{-1,1\}$, so that  the distribution $Q_n$ of ${n}^{1/2} \mathrm{vect} ( \check{\Gamma}_n - \Omega^{-1} )$
with $\check{\Gamma}_n = D_n P_n \widehat{\Gamma}_n$ satisfies, as $n \to \infty$,
\[
d_w\{ Q_n , \mathcal{N}(0,{F_k}) \}
\to 0,
\]
where details concerning the matrix $F_k$ are given in Appendix \ref{appendix:F:k:matrices}.
\end{proposition}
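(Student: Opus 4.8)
The plan is to reduce the problem to a perturbation analysis of the constrained maximization \eqref{hat:gamma:k:matrices}, linearize its first-order optimality conditions around the population diagonalizer, and transport the Gaussian approximation of the scatter matrices through the resulting bounded linear map, keeping every bound uniform in $n$ because no quantity converges to a fixed limit. First I would exploit the affine equivariance of the functional defined by \eqref{hat:gamma:k:matrices}: replacing $X$ by $\Omega Z$ multiplies each $\widehat{M}(f_l)$ by $\Omega$ on the left and $\Omega^\T$ on the right, and a direct computation shows that the maximizer evaluated on $X$ equals the maximizer evaluated on $Z$ composed with $\Omega^{-1}$. Hence it suffices to treat $\Omega = I_p$, where each $M(f_l)$ is diagonal and $M(f_0) \to I_p$. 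In this parametrization I would rewrite the constraint $\Gamma \widehat{M}(f_0) \Gamma^\T = I_p$ as $\Gamma = U\, \widehat{M}(f_0)^{-1/2}$ with $U$ orthogonal, so that the criterion becomes $\sum_{l=1}^k \sum_{j=1}^p \{ u_j^\T \widehat{N}(f_l) u_j \}^2$ with $\widehat{N}(f_l) = \widehat{M}(f_0)^{-1/2} \widehat{M}(f_l) \widehat{M}(f_0)^{-1/2}$ and $u_j^\T$ the rows of $U$; maximizing over orthogonal $U$ is an approximate joint diagonalization of the symmetric matrices $\widehat{N}(f_l)$.

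Next I would assemble the stochastic inputs. Extending Proposition \ref{prop_cov_asymptotic_normality} from the pair $(f,f_0)$ to the family $(f_0,f_1,\ldots,f_k)$, which requires no new idea since its proof applies to any finite collection of kernels satisfying Assumption \ref{assumption:f}, gives joint asymptotic normality in the metric $d_w$ of $n^{1/2}\{\widehat{M}(f_l) - M(f_l)\}_{l=0}^k$ with an asymptotic covariance whose largest eigenvalue stays bounded. A first-order expansion of the matrix map $(A_0,A_l) \mapsto A_0^{-1/2} A_l A_0^{-1/2}$, valid because $M(f_0) \to I_p$ is well conditioned, then yields joint asymptotic normality of $n^{1/2}\{\widehat{N}(f_l) - N(f_l)\}_{l=1}^k$ as a bounded linear image of the previous vector, with each $N(f_l)$ diagonal.

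The core step is the linearization of the argmax. Writing $\widehat{U} = \exp(\widehat{E})$ with $\widehat{E}$ skew-symmetric and small, which is legitimate by the consistency in Proposition \ref{prop:consistency:k:matricesb} after the sign and permutation alignment described below, I would substitute $U = I_p + \widehat{E} + O(\|\widehat{E}\|^2)$ into the criterion and collect the first-order stationarity conditions. For each pair $i \neq j$ these reduce to a scalar linear equation solved by
\[
\widehat{E}_{i,j} = \frac{\sum_{l=1}^k \{ N(f_l)_{i,i} - N(f_l)_{j,j} \}\, \widehat{N}(f_l)_{i,j}}{\sum_{l=1}^k \{ N(f_l)_{i,i} - N(f_l)_{j,j} \}^2} + r_{i,j},
\]
where $r_{i,j}$ collects higher-order terms. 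Assumption \ref{assumption:identifiability:k:matrices} guarantees that the denominator is at least $\delta^2 > 0$ for all large $n$, so the map sending the off-diagonal perturbations $\widehat{N}(f_l)_{i,j}$ to $\widehat{E}_{i,j}$ is linear and uniformly bounded in $n$; skew-symmetry fixes $\widehat{E}_{j,i} = - \widehat{E}_{i,j}$ and $\widehat{E}_{i,i} = 0$. Combining $\widehat{E}$ with the whitening expansion $\widehat{M}(f_0)^{-1/2} = I_p - \tfrac{1}{2}\{\widehat{M}(f_0) - I_p\} + O_p(n^{-1})$ produces a bounded linear map $L_n$ satisfying $n^{1/2}\, \mathrm{vect}(\widehat{\Gamma} - I_p) = L_n\, n^{1/2} \mathrm{vect}\{\widehat{M}(f_l) - M(f_l)\}_{l=0}^k + o_p(1)$.

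The remaining work, which I expect to be the main obstacle, is to make this expansion uniform in $n$ and to transfer the Gaussian approximation through $L_n$ in the $d_w$ metric. Since nothing converges to a fixed limit, I cannot invoke an ordinary continuous-mapping argument; instead I would show that the second-order remainder in the stationarity conditions is $o_p(1)$ uniformly, using the boundedness of the asymptotic covariance from Proposition \ref{prop_cov_asymptotic_normality} together with the $\delta$-separation from Assumption \ref{assumption:identifiability:k:matrices}, and that the family of uniformly bounded linear maps $L_n$ sends a $d_w$-Gaussian-approximated sequence to a $d_w$-Gaussian-approximated sequence with covariance $F_k = L_n V L_n^\T$. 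Finally, undoing the reduction to $\Omega = I_p$ and choosing $P_n$ and $D_n$ so that $\check{\Gamma}_n = D_n P_n \widehat{\Gamma}_n$ is aligned with the branch of the diagonalizer converging to $\Omega^{-1}$, which is precisely what makes the corresponding $\widehat{E}$ genuinely small, yields the stated conclusion.
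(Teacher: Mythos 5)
Your proposal is correct and follows essentially the same route as the paper: reduce to $\Omega = I_p$ by affine equivariance, whiten by $\widehat{M}(f_0)^{-1/2}$, linearize the first-order optimality conditions of the joint-diagonalization criterion around the population diagonalizer (your formula for $\widehat{E}_{i,j}$ matches the paper's first-order expression for $n^{1/2}\widehat{\Gamma}_{i,j}$, with Assumption \ref{assumption:identifiability:k:matrices} guaranteeing the denominator stays away from zero), push the non-converging Gaussian approximation of the scatter matrices through the resulting uniformly bounded linear maps in the $d_w$ metric via subsequence arguments, and align signs and permutations at the end. The only difference is cosmetic: you parametrize the orthogonal factor by the matrix exponential of a skew-symmetric perturbation, whereas the paper linearizes the estimating equations directly using the commutation matrix, following \cite{virta2018asymptotically} and \cite{MiettinenIllnerNordhausenOjaTaskinenTheis2016}.
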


{In Proposition \ref{asymp_bssmulti}, for any $n \in \mathbb{N}$, the choice of $\widehat{\Gamma}_n$
satisfying \eqref{hat:gamma:k:matrices} is not unique.
The proposition shows that, for any choice of the sequence of matrices $\widehat{\Gamma}_n$, one can exchange the rows and multiply them by $1$ or  $-1$, to obtain a sequence of matrices $\check{\Gamma}_n$ that converges to $\Omega^{-1}$ as $n \to \infty$. 
Furthermore, similarly as in Proposition \ref{prop_eigen}, we show that the sequence of distributions of ${n}^{1/2} \mathrm{vect} ( \check{\Gamma}_n - \Omega^{-1} )$ is asymptotically close to a sequence of Gaussian distributions. The sequence of $p^2 \times p^2$ covariance matrices of these Gaussian distributions  is ${F_k}$. }

The idea of joint diagonalisation is not new in spatial data analysis. For example in \citet{XieMyers1995}, \citet{XieMyersLong1995} and \citet{IacoMyersPalmaPosa2013}, in a model-free context, matrix variograms have been jointly diagonalized. However, the unmixing matrix was restricted to be orthogonal, which would therefore not solve the spatial blind source separation model.

While two symmetric matrices can always be simultaneously diagonalized, this is usually not the case for more than two matrices which are estimated based on finite data. Therefore, algorithms are needed for approximate joint diagonalisation. In this paper we use an algorithm which is based on {Givens} rotations \citep{Clarkson1988}. Other possible algorithms and their impact on the properties of the estimates are for example discussed in \citet{IllnerEtAl2015}.

\section{Simulations}\label{Simu}

\subsection{Preliminaries}

In this section we use simulated data to verify our asymptotic results and to compare the efficiencies of the different local covariance estimates under a varying set of spatial models. All simulations are performed in R \citep{R} with the help of the packages \textit{geoR} \citep{Rgeor}, \textit{JADE} \citep{MiettinenNordhausenTaskinen2017} and \textit{RcppArmadillo} \citep{Rrcpparmadillo}. To generate the simulation data, we have chosen some particular covariance functions for the latent fields. However, our proposed methods do not use this information in any way, but operate solely through the selection of local covariance matrices.

\subsection{{
Asymptotic approximation of the unmixing matrix estimator}}\label{subsec:simu_1}

We start with a simple simulation to establish the validity of the {asymptotic approximation} of the unmixing matrix estimator $\widehat{\Gamma}(f)$ for different kernels $f$ and to obtain some preliminary comparative results between the proposed estimators. We consider a centered, three-variate spatial blind source separation model $X({s}) = {\Omega} {Z}({s}) $ where each of the three independent latent fields has a Mat\'ern covariance function with shape and range parameters $(\kappa,\phi) \in \{(6,\text{1$\cdot$2}),(1,\text{1$\cdot$5}),(\text{0$\cdot$25},1)\}$, which correspond to the left panel in Fig.~\ref{fig:cov_functions}. We recall that the Mat\'ern correlation function is defined by
{
\[ 
\rho(h) = 2^{1-\kappa}\, \Gamma(\kappa)^{-1}\, (h/ \phi)^\kappa\, K_{\kappa}(h/ \phi),
\]
}where $ \kappa > 0$ is the shape parameter, $ \phi > 0 $ is the range parameter and $ K_{\kappa} $ is the modified Bessel function of the second kind {of order $ \kappa $}. Our location pattern is constructed in the following way: the first 200 locations are drawn uniformly random from an origin-centered square $ S_1 $ of side length $ 200^{1/2} $ units. For the next 200 locations, we scale the side length of the square $ S_1 $ by the factor $ 2^{1/2} $ to obtain the larger square $ S_2 $ and draw the points uniformly random on $ S_2 \setminus S_1 $. Next, we always scale the side length of the previous square $ S_j $ by $ 2^{1/2} $ to obtain $ S_{j + 1} $ and draw the same amount of locations we already have on $ S_{j + 1} \setminus S_j $, thus  doubling the number of points every time. This process is continued until we have obtained a total of $ 3200 $ locations. In the simulation we consider the sample sizes $ n = 100 \times 2^j $, for $ j = 1, \ldots, 5 $, each time using the first $ n $ of the $ 3200 $ points, that is, all points inside the $ j $th innermost square on the left-hand side of Fig.~\ref{fig:grids}. The six samples then correspond to nested samples of points and represent the increasing domain asymptotic scheme implied by Assumption \ref{assumption:expansion}.

\begin{figure}[t]
\centering
\includegraphics[width=\textwidth]{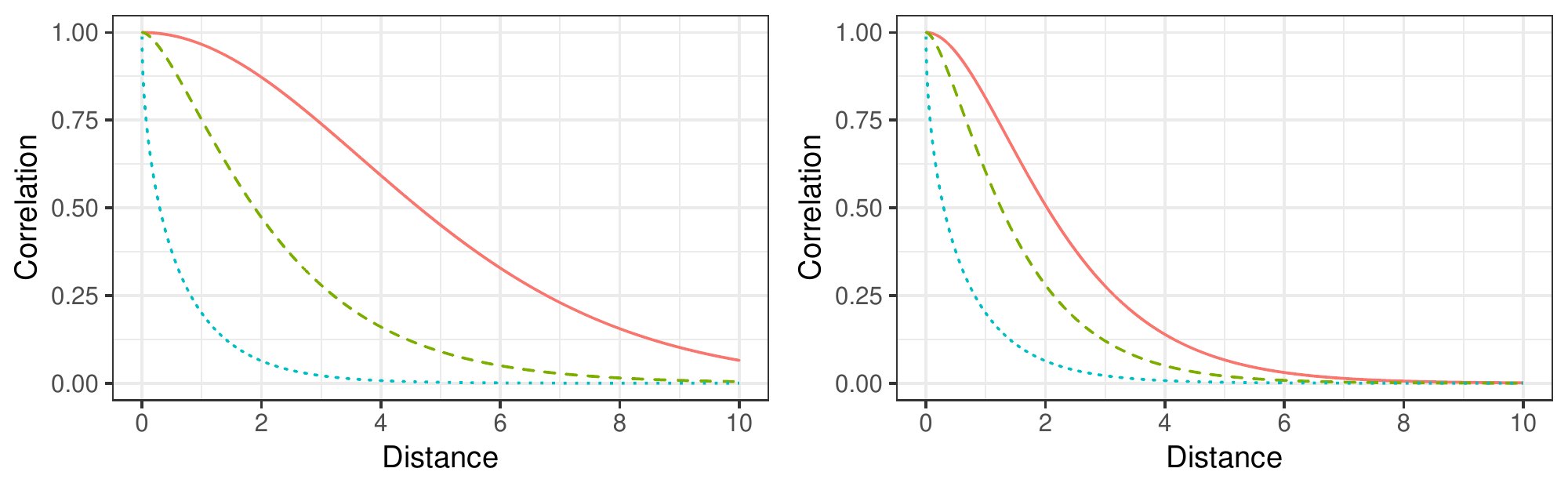}
\caption{Mat\'ern covariance functions of the {first (solid red line), second (dashed green line) and third (dotted blue line)} latent fields {used in \S~\ref{subsec:simu_1} (left panel) and \S~\ref{subsec:simu_2} (right panel). } The parameter vectors $ (\kappa, \phi) $ of the three fields equal, $ (6,\text{1$\cdot$2}),(1,\text{1$\cdot$5}),(\text{0$\cdot$25},1) $ and $ (2,1),(1,1),(\text{0$\cdot$25},1) $, respectively.}
\label{fig:cov_functions}
\end{figure}

\begin{figure}[t]
	\centering
	\includegraphics[width=0.8\textwidth]{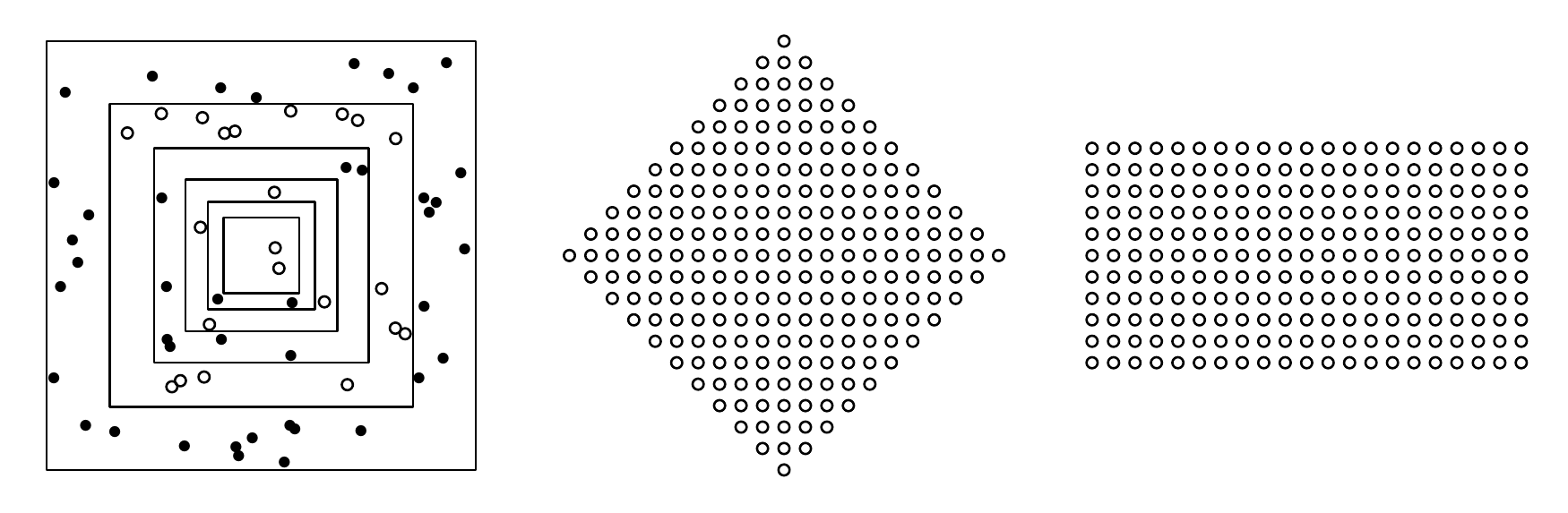}
	\caption{From left to right: the location pattern scheme used in  {\S}~\ref{subsec:simu_1} {(with the marker type alternating between the consecutive layers and only 1 percent of the locations shown for clarity), a diamond grid of radius 10 having $ n = 221 $ locations and a rectangle grid of radius 10 having $ n = 231 $ locations. The diamond and rectangle grids, with a one unit distance between two neighbouring locations, are used in {\S}~}\ref{subsec:simu_2}.}
	\label{fig:grids}
\end{figure}

We expect any successful unmixing estimator $\widehat{\Gamma}$ to satisfy $\widehat{\Gamma} {\Omega} \approx {I}_p$ up to sign changes and row permutations. The minimum distance index \citep{ilmonen2010new} is defined as,
\[
\MDI(\widehat{\Gamma}) = (p - 1)^{-1/2} \inf\{\| {C} \widehat{\Gamma} \Omega  - {I}_p \|, C \in \mathcal{C} \},
\]
where $\mathcal{C}$ is the set of all matrices with exactly one non-zero element in each row and column and $ \| \cdot \| $ is the Frobenius norm. The minimum distance index measures how close $\widehat{\Gamma} \Omega$ is to the identity matrix up to scaling, order and signs of its rows, and $0 \leq \MDI(\widehat{{\Gamma}}) \leq 1$ with lower values indicating more efficient estimation. Moreover, for any $\widehat{{\Gamma}}$ such that ${n}^{1/2} \mathrm{vect}(\widehat{{\Gamma}} - {I}_p) \rightarrow \mathcal{N}({0}, {\Sigma})$ for some limiting covariance matrix ${\Sigma}$, the transformed index $n (p - 1) \MDI(\widehat{{\Gamma}})^2$ converges to a limiting distribution $\sum_{i = 1}^k \delta_i \chi^2_i$ where $\chi^2_1, \ldots , \chi^2_k$ are independent chi-squared random variables with one degree of freedom and $\delta_1, \ldots , \delta_k$ are the $k$ non-zero eigenvalues of the matrix,
\[
\left({I}_{p^2} - {D}_{p, p} \right) {\Sigma} \left({I}_{p^2} - {D}_{p, p} \right),
\]
where ${D}_{p, p} = \sum_{j = 1}^p {E}^{jj} \otimes {E}^{jj}$ and $ E^{jj} $ is the $ p \times p$ matrix with one as its $ (j, j) $th element and the rest of the elements equal zero, and $\otimes$ is the usual tensor matrix product. In particular, the expected value of the limiting distribution is the sum of the limiting variances of the off-diagonal elements of $\widehat{{\Gamma}}$. This provides us with a useful single-number summary to measure the asymptotic efficiency of the method, i.e., the mean value of $n (p - 1) \MDI(\widehat{{\Gamma}})^2$ over several replications.

Following the argument of the proof of Proposition~\ref{prop:TCL:deux:matrices} 
in the supplementary material, our spatial blind source separation estimators are affine equivariant. 
More precisely, let $\widehat{\Gamma}(I_p)$ be computed from $\{Z(s_i)\}_{i=1,\ldots,n}$ according to \eqref{hat:gamma:k:matrices} and recall that $\widehat{\Gamma}$ is computed from $\{X(s_i)\}_{i=1,\ldots,n}$ according to \eqref{hat:gamma:k:matrices}. Then we have $\widehat{\Gamma} = \widehat{\Gamma}(I_p) \Omega^{-1}$, up to sign changes and row permutations.
In this  sense, $\widehat{\Gamma} {\Omega} $ is invariant to the value of $ {\Omega} $. 
 As the minimum distance index depends on $ \widehat{ \Gamma} $ only through $\widehat{\Gamma} {\Omega} $, it is thus without loss of generality that we may consider throughout {\S}~\ref{Simu} only the trivial mixing case ${\Omega} = {I}_3$. Taking different $\Omega$ into consideration would give exactly the same results as those provided below.

Recall that the ball and ring kernels are defined as $B(h)(s)= I(\| s\|\leq h)$ and $R(h_1,h_2)(s)= I(h_1\leq \| s\|\leq h_2)$ for fixed $ h \geq 0 $ and $h_2 \geq h_1 \geq 0$. We simulate 2000 replications  for each sample size $n$ and estimate the unmixing matrix in each case with three different choices for the local covariance matrix kernels: $B(1), R(1, 2)$ and $\{ B(1), R(1, 2) \}$, where the argument $ s $ is dropped and the brackets $ \{ \} $ denote the joint diagonalisation of the kernels inside. 
The latent covariance functions on the left panel of Fig.~\ref{fig:cov_functions} show that the dependencies of the last two fields die off rather quickly, and we would expect that already very local information is sufficient to separate the fields. Moreover, out of all one-unit intervals, the magnitudes of the three covariance functions differ the most from each other in the interval from 1 to 2 and we may reasonably assume that either $R(1, 2)$ or $\{ B(1), R(1, 2) \}$ will be the most efficient choice.

\begin{figure}[t]
\centering
\includegraphics[width=0.8\textwidth]{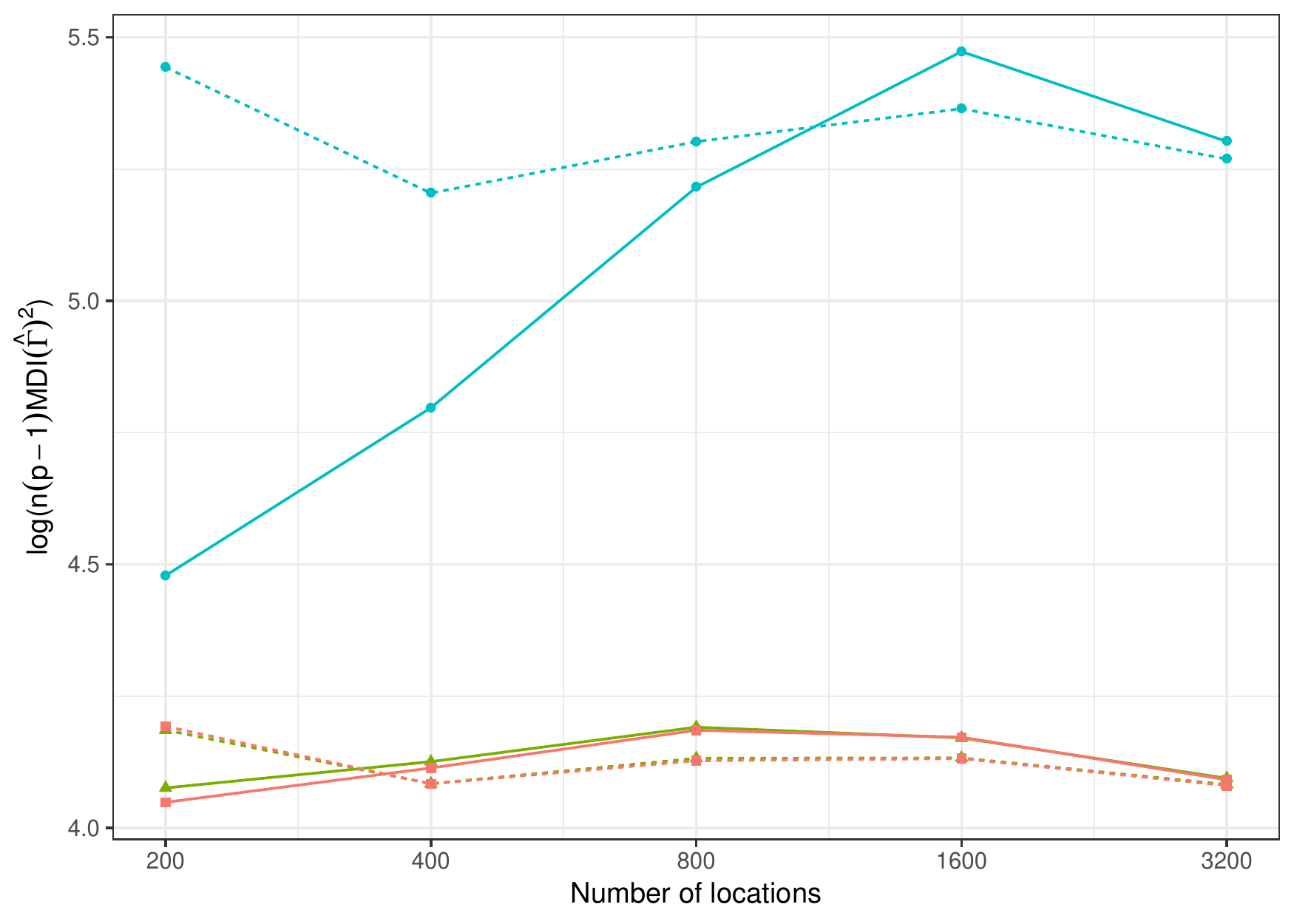}
\caption{{The solid lines give the mean values of $n (p - 1) \MDI(\widehat{{\Gamma}})^2$ in the first simulation and the dashed lines correspond to the asymptotic approximations of the same quantities. The three used local covariance matrices are $ B(1) $ (blue line), $ R(1, 2) $ (green line) and $ \{ B(1), R(1, 2) \} $ (red line).}}
\label{fig:simulation_1_1}
\end{figure}

The mean values of $n (p - 1) \MDI(\widehat{{\Gamma}})^2$ over the 2000 replications are shown as the solid lines in Fig.~\ref{fig:simulation_1_1}, with the dashed lines representing {the asymptotic approximated values of the means, towards which they are expected to converge} (see Propositions \ref{prop_eigen} and \ref{asymp_bssmulti}). As evidenced in Fig.~\ref{fig:simulation_1_1}, this is indeed what happens. For the reasons detailed in the previous paragraph, the kernel $R(1, 2)$ is notably a more efficient choice than $ B(1) $. However, the ball kernel still carries some additional information  to the ring as their joint diagonalisation, $\{ B(1), R(1, 2) \}$, gives the best results out of the three choices, albeit marginally. As the main purpose of the current simulation was to verify the limiting theorems and compare the different choices of kernels, the estimation accuracy of the sources was considered jointly, through the minimum distance index. However, as it is possible that some of the individual sources are more difficult to estimate than others, we have included in {\S}~\ref{SuppSimuComp} of the supplementary material a simulation study exploring individual component recovery.

The previous investigation and Fig.~\ref{fig:simulation_1_1} used only the expected value of the asymptotic distribution. 
{In Fig.~\ref{fig:simulation_1_2} of the supplementary material, 
we have also plotted the estimated densities of $n (p - 1) \MDI(\widehat{{\Gamma}})^2$ for all local covariance matrices and a few selected sample sizes and compared with the density of the asymptotic approximation estimated from a sample of 100,000 random variables drawn from the corresponding distributions. Overall, the two densities fit each other rather well, especially for the local covariance matrices involving the ring kernel. This shows that the asymptotic approximation {to the} distribution of $n (p - 1) \MDI(\widehat{{\Gamma}})^2$ is good already for small sample sizes.}
%

\subsection{The effect of range on the efficiency}\label{subsec:simu_2}

The second simulation explores the effect of the range of the latent fields on the asymptotically optimal choice of local covariance matrices. The comparisons between the estimators are made on the basis of the expected values of the asymptotic approximations to the distribution of $n (p - 1) \MDI(\widehat{{\Gamma}})^2$ (that is, using the equivalent of the dashed lines in Fig.~\ref{fig:simulation_1_1}), meaning that no randomness is involved in this simulation.

We consider three-variate random fields ${X}({s}) = {\Omega} {Z}({s}) $, where ${\Omega} = {I}_3$ and the latent fields have Mat\'ern covariance functions with respective shape parameters $\kappa = 2, 1, \text{0$\cdot$25}$ and a range parameter $ \phi \in\, \{\text{1$\cdot$0}, \text{1$\cdot$1}, \text{1$\cdot$2}, \ldots , \text{30$\cdot$0}\}$. The three covariance functions are shown for $ \phi = 1 $ in the right panel of Fig.~\ref{fig:cov_functions}.
The random field is observed at three different point patterns: diamond-shaped, rectangular and random, which was simulated once and held fixed throughout the study. The diamond-shaped point pattern has a radius of $ m = 30 $ and a total of $ n = 1861 $ locations, whereas the rectangular point pattern has a ``radius'' of $ m = 15 $ with a total of $ n = 1891 $ locations. In both patterns, the horizontal and vertical distance between two neighbouring locations is one unit and examples of the two pattern types are shown in the middle and right panels of Fig.~ \ref{fig:grids} with a radius $m=10$. A rectangular pattern with ``radius'' $ m $ is defined to have the width $ 2m + 1 $ and the height $ m + 1 $. The random point pattern is generated simply by simulating $ n = 1861 $ points uniformly in the rectangle $ (-30, 30) \times (-30 ,30) $. We consider a total of eight different local covariance matrices, $B(r), R(r-1, r)$ for $ r = 1, 3, 5$, and the joint diagonalisations of the previous sets: $ \{B(1), B(3), B(5)\} $ and $ \{R(0, 1), R(2, 3), R(4, 5)\} $. 

\begin{figure}[t]
	\centering
\includegraphics[width=0.8\textwidth]{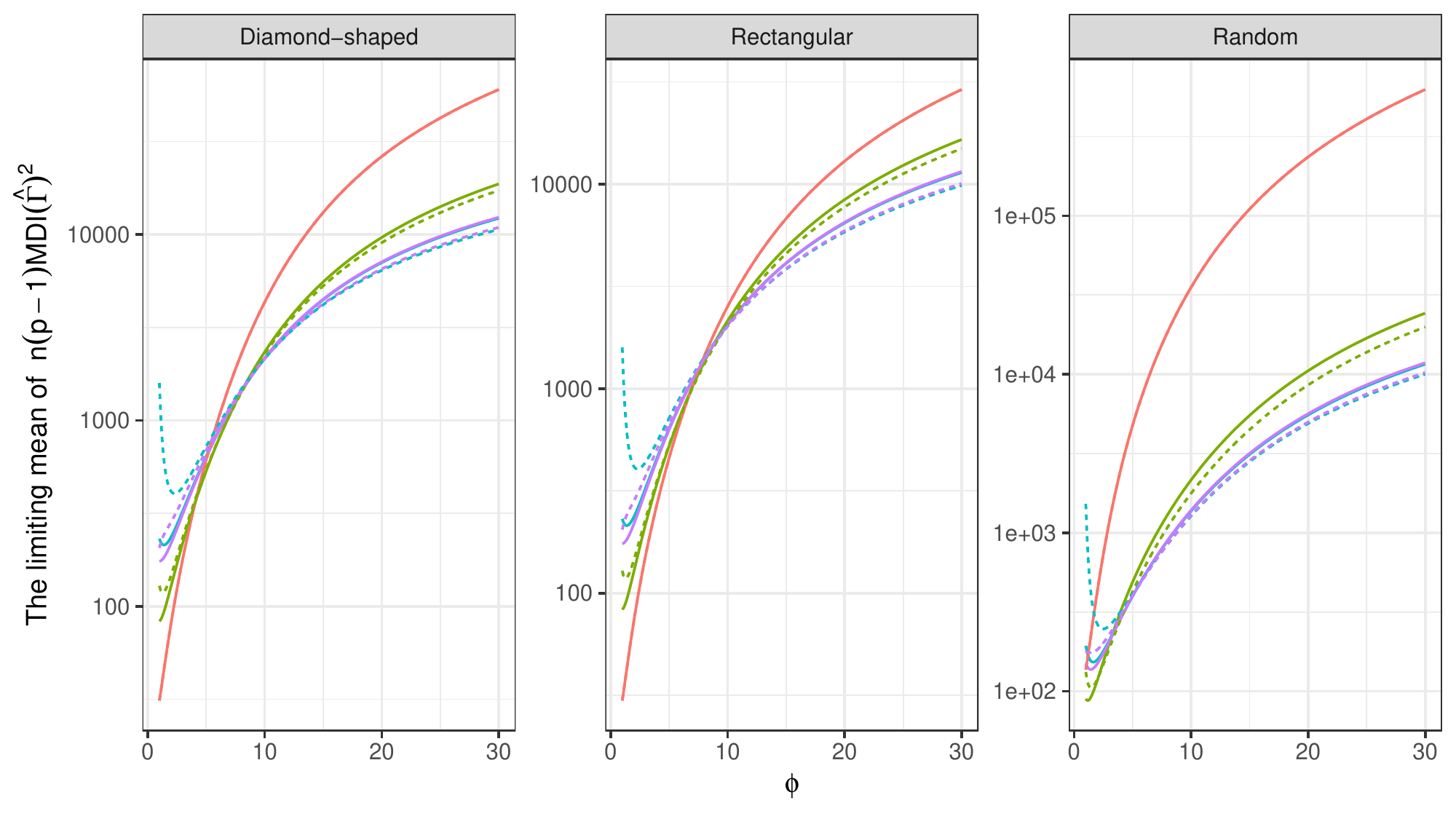}
	\caption{The {asymptotic approximate} mean values of $n (p - 1) \MDI(\widehat{{\Gamma}})^2$ as a function of the range of the latent Mat\'ern random fields for the different choices of local covariance matrices in the second simulation. {The solid and the dashed lines correspond, respectively, to the ball and ring kernels and the value of the parameter $ r $ is indicated by the color of the line as follows: 1 (red), 3 (green), 5 (blue), J (purple).} The $ y $-axis has a logarithmic scale.}
	\label{fig:simulation_2}
\end{figure}

The results of the simulation are displayed in Fig.~\ref{fig:simulation_2} where the two joint diagonalisations are denoted by having value ``J'' as the parameter $ r $. Recall that the lower the value on the $ y $-axis, the better that particular method is at estimating the three latent fields. The relative ordering of the different curves is very similar across all three plots, and it seems that the choice of the location pattern does not have a large effect on the results. In all the patterns, the local covariance matrices with either $ r = 1 $ or $ r = 3 $ are the best choices for small values of the range $ \phi $ but they quickly deteriorate as $ \phi $ increases. The opposite happens for the local covariance matrices with $ r = 5 $; they are among the worst for small $ \phi $ and relatively improve with increasing $ \phi $. The joint diagonalisation-based choices fall somewhere {in-between} and are never the best nor the worst choice. However, they yield performance very close to the best choice in the right end of the range-scale and are close to the optimal ones in the left end. Thus, their use could be justified in practice as the ``safe choice''. Comparing the two types of local covariance matrices, balls and rings, we observe that in the majority of cases the rings prove superior to the balls.

\subsection{Efficiency comparison}\label{subsec:simu_3}

To compare a larger number of local covariance matrices and their combinations, we simulate three-variate random fields ${X}({s}) = {\Omega} {Z}({s}) $, where ${\Omega} = {I}_3$ and the latent fields have Mat\'ern covariance functions with the shape parameters $\kappa = 6, 1, \text{0$\cdot$25}$ and the range parameter $\phi = 20$, in kilometers. We consider two different fixed-location patterns fitted inside the map of Finland; see Fig.~\ref{fig:finlands}. The first location pattern has the locations drawn uniformly from the map and the second location pattern is drawn from a west-skew distribution. Both patterns have a total of $n = 1000$ locations and to better distinguish the scale we have added three concentric circles with respective radii of 10, 20, and 30 kilometers in the empty area of the skew map.

We simulate a total of 2000 replications of the above scheme with the fixed maps. In each case we compute the minimum distance index values 
of the estimates obtained with the local covariance matrix kernels $B(r), R(r-10, r), G(r)$, where $r = 10, 20, 30, 100$, and the joint diagonalisation of each of the three quadruplets $\{ B(10), B(20), B(30), B(100 \}$, 
$\{ R(10), R(20), R(30), R(100 \}$ and $\{ G(10), G(20), G(30), G(100 \}$ adding up to a total of 15 estimators. The Gaussian kernel is parametrized as $G(r) \equiv \exp[ -\text{0$\cdot$5} \{ \Phi^{-1}(\text{0$\cdot$95}) s/r \}^2 ]$, where $ s $ is the distance and $\Phi^{-1}(x)$ is the quantile function of the standard normal distribution, making $G(r)$ have $90$\% of its total mass in the radius $r$ ball around its center. Thus, $G(r)$ can be considered  a smooth approximation of $B(r)$. 
The larger radius kernels $B(100)$, $R(90, 100)$, $G(100)$ are included in the simulation to investigate what happens when we overestimate the dependency radius. The mean minimum distance index values for the 15 estimators are plotted in Fig.~\ref{fig:simulation_4_1} 
and show that for both maps and all local covariance types, increasing the radius yields more accurate separation results all the way up to $r = 30$, whereas for $r = 100$ the results again worsen. This observation shows that when using a single local covariance matrix, the choice of the type and the radius are especially important, most likely requiring some expert knowledge on the study. However, this problem is completely averted when we use the joint diagonalisation of several matrices. For both maps and all local covariance types the joint diagonalisation produces results very comparable to the best individual matrices, even though the joint diagonalisations also include the ``bad choices'', $r = 10, 20, 100$. We also observe a similar behaviour in the first and second simulation studies where, in the absence of knowledge on the optimal choice, the joint diagonalisation either is the most efficient choice or provides a performance very close to the most efficient choice. Thus, we recommend the use of the joint diagonalisation of scatter matrices with a sufficiently large variation of radii for the kernels.

\begin{figure}[t]
	\centering
	\includegraphics[width=0.6\textwidth]{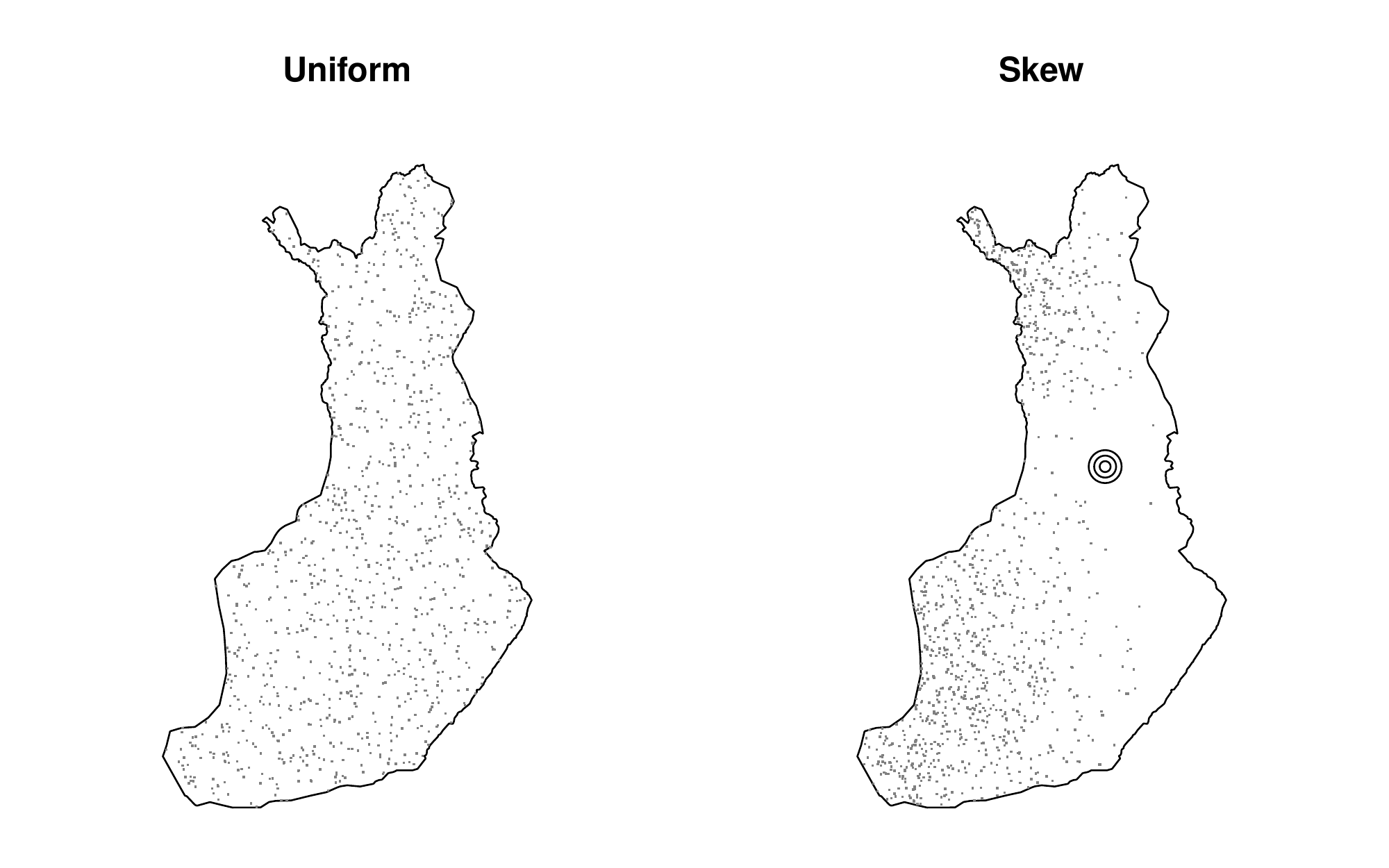}
	\caption{The two fixed location patterns in the map of Finland, the uniform on the left and the skew on the right.}
	\label{fig:finlands}
\end{figure}

Finally, a comparison between the two maps reveals that the relative behaviour of the estimators is roughly the same in both maps, but the estimation is generally more difficult in the skew map, revealed by the on average higher minimum distance index values. This is explained by the large number of isolated points which contribute no information to the estimation of the local covariance matrices, making the sample size essentially smaller than $n = 1000$. 


\begin{figure}[t]
	\centering
	\includegraphics[width=0.5\textwidth]{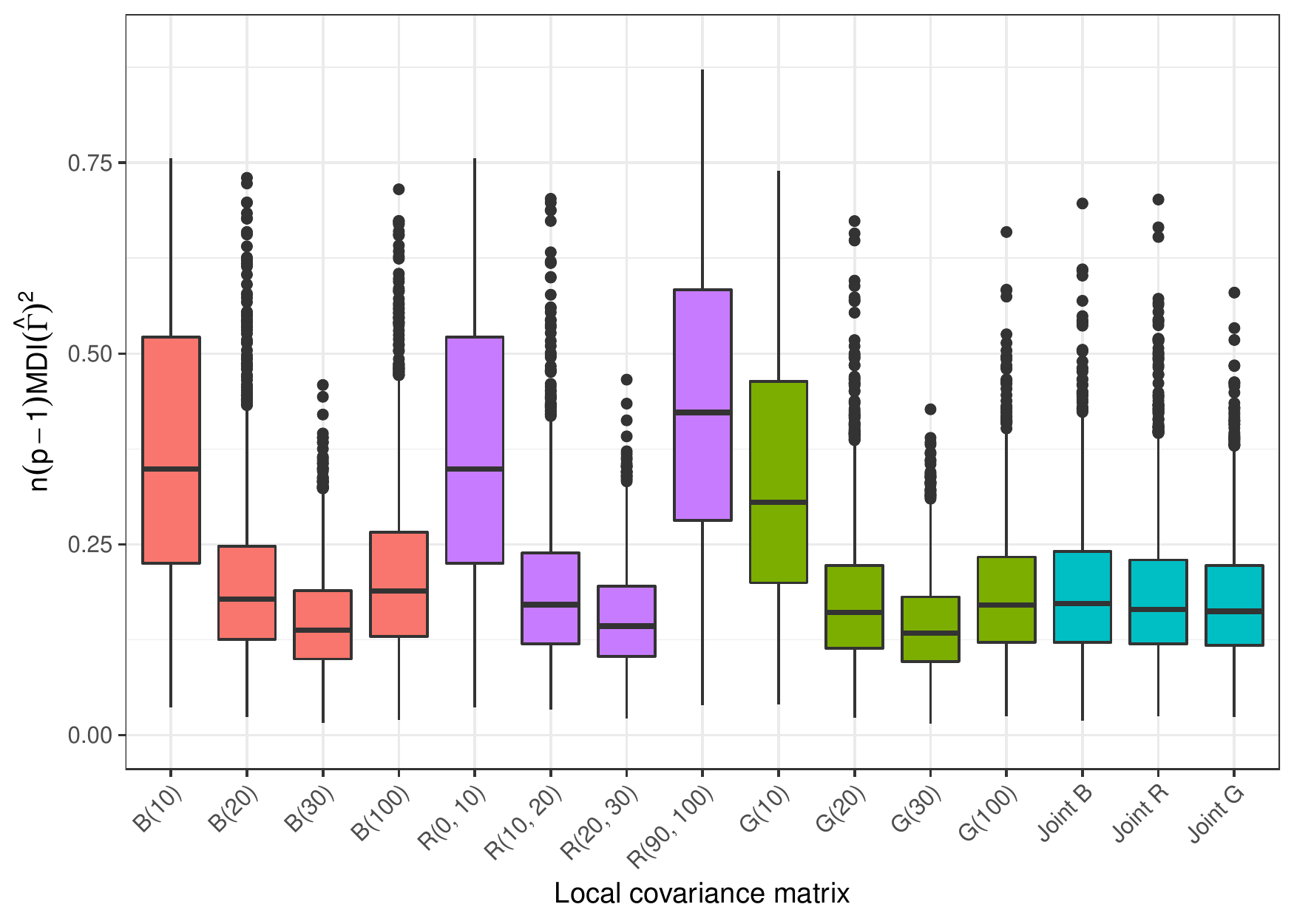}\includegraphics[width=0.5\textwidth]{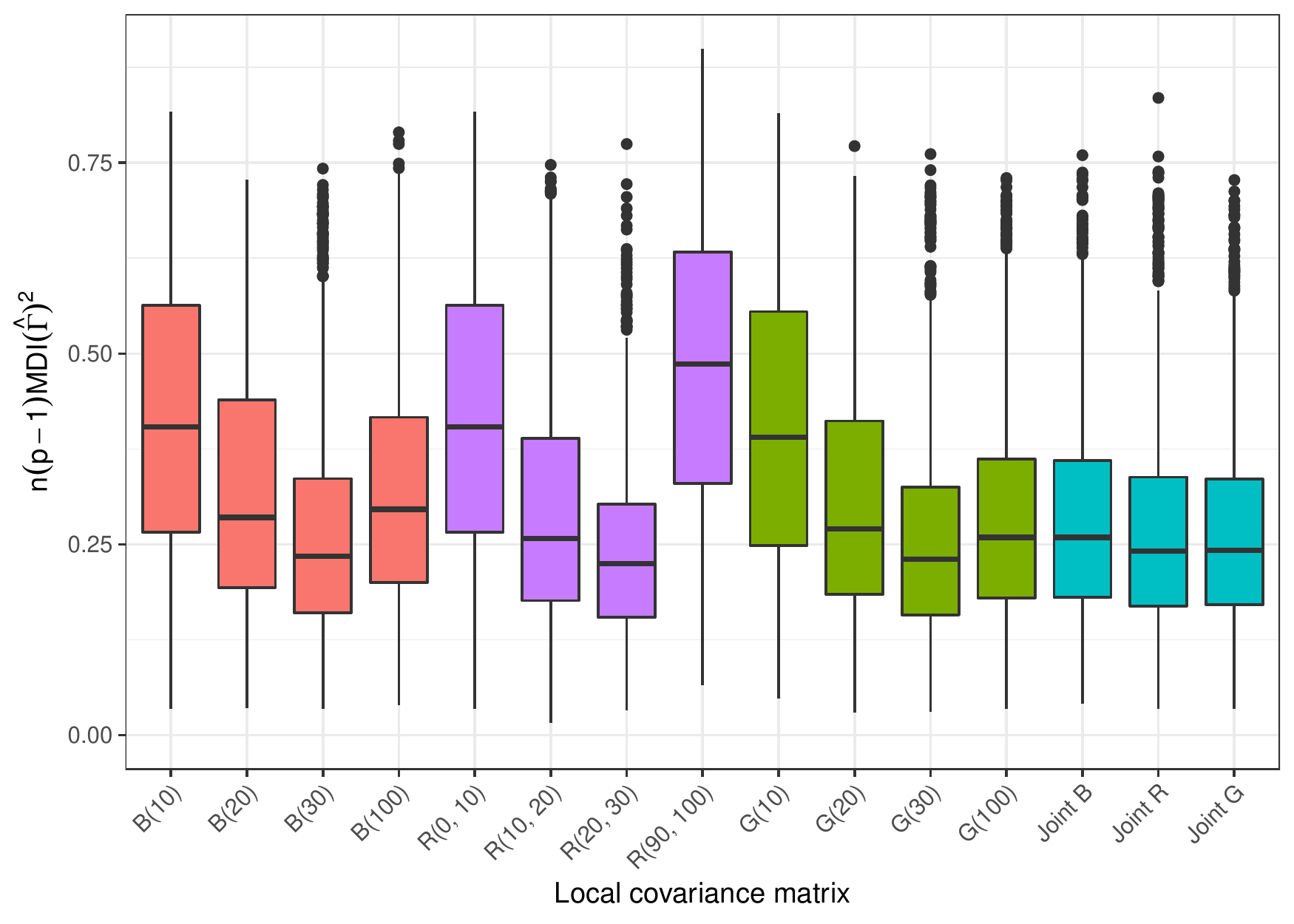}
	\caption{{The results of the efficiency study for the uniform sampling design (left) and for the skew design (right).}}
	\label{fig:simulation_4_1}
\end{figure}


\section{Data application}\label{Example}

To illustrate the benefit of jointly diagonalising more than two scatter matrices from a practical point of view, we reconsider the moss data from the Kola project which {are} available in the R package StatDa \citep{StatDA} and described in \citet{ReimannFilzmoserGarrettDutter:2008}, for example. The data {consist} of 594 samples of terrestrial moss collected at different sites in north Europe on the borders of Norway, Finland and Russia. The corresponding map with sampling locations is given in the online supplement in Fig.~\ref{fig:KolaInfo}. The amount of 31 chemical elements found in the moss samples was already used as a spatial blind source separation example in \citet{NordhausenOjaFilzmoserReimann2015} where the covariance matrix and $B(50)$ were simultaneously diagonalized.
The goal of that analysis was to reveal interpretable 
components exhibiting clear spatial patterns.
In \citet{NordhausenOjaFilzmoserReimann2015}, the radius of 50 kilometers was carefully chosen by an expert in that analysis and considered best compared to several other radii not mentioned there. The analysis found six meaningful components, which could be used to distinguish underlying natural geological patterns from environmental pollution patterns. These six components had the six largest eigenvalues and are visualized in Fig.~\ref{fig:B50} in the online supplement.

We show that the {gold standard} components can be stably estimated without subject knowledge on the optimal radius by simply jointly diagonalizing a large enough collection of local covariance matrices. To address the compositional nature of the data, we follow the same data preparation steps as in \citet{NordhausenOjaFilzmoserReimann2015} and then compute five competing spatial blind source separation estimates. The scatters we used in addition to the covariance matrix are detailed in Table~\ref{tab:CorKola}. Using these methods, we identify the six components with the highest correlations, in absolute values, to the six main components identified 
in \citet{NordhausenOjaFilzmoserReimann2015}. Table~\ref{tab:CorKola} gives the correlations of the six components. 
\begin{table}
	\def~{\hphantom{0}}
	\tbl{Maximal absolute correlations of different estimators with respect to the {gold} standard. All estimators used the empirical covariance matrix. The distances for the scatters are given in kilometers}{
		\begin{tabular}{c p{5.7cm} cccccc}
			Est & Scatters & IC1 & IC2 & IC3 & IC4 & IC5 & IC6 \\ 
			1 & $B(25)$  & 0$\cdot$96 & 0$\cdot$93 & 0$\cdot$91 & 0$\cdot$68 & 0$\cdot$64 & 0$\cdot$77 \\
			2 & $B(75)$  & 0$\cdot$98 & 0$\cdot$98 & 0$\cdot$92 & 0$\cdot$96 & 0$\cdot$91 & 0$\cdot$63 \\
			3 & $B(100)$ & 0$\cdot$76 & 0$\cdot$80 & 0$\cdot$77 & 0$\cdot$96 & 0$\cdot$60 & 0$\cdot$53 \\
			4 & $R(0,25)$, $R(25,50)$, $R(50,75)$, $R(75,100)$ & 0$\cdot$97 & 0$\cdot$98 & 0$\cdot$92 & 0$\cdot$97 & 0$\cdot$83 & 0$\cdot$80 \\
			5 & $R(0,10)$, $R(10,20)$, $R(20,30)$, $R(30,40)$,
			$R(40,50)$, $R(50,60)$, $R(60,70)$, $R(70,80)$
			& 0$\cdot$96 & 0$\cdot$97 & 0$\cdot$91 & 0$\cdot$97 & 0$\cdot$78 & 0$\cdot$77 \\
	\end{tabular}}
	\label{tab:CorKola}
	\begin{tabnote}
		Est, estimator; IC, independent component.
	\end{tabnote}
\end{table}
The table shows that when using only two scatters, estimators 1, 2 and 3, some components cannot be easily found. However, when jointly diagonalising more than two scatters, the results are more stable and less dependent on the chosen distances of the scatters as can be seen for estimators 4 and 5.

This is 
illustrated using the {gold standard} and estimators 3 and 4 in Fig.~\ref{fig:ICs_1_2} in the Appendix for the first two components. For completeness, {\S}~\ref{section:details:real:data} of the online supplement contains all six components for the three estimators.
The first two components  represent, according to \citet{NordhausenOjaFilzmoserReimann2015}, areas with different types of industrial contamination and Figure~\ref{fig:ICs_1_2} shows that the {gold standard} and estimator 4 agree quite well on these, but estimator 3 yields a different map. More precisely, the first component obtained by the {gold} standard and the estimator 4 highlights a cluster of negative scores around the Monchegorsk and Apatity region, which reveals the mining and processing of alkaline deposits. This cluster is not revealed by estimator~3. Similarly, the second components are similar between the {gold} standard and the estimator 4, but the component from the estimator 3 differs from these two, especially for the sampling locations in Finland. Thus, using several scatters gives a more stable impression whereas the maps can vary considerably when only two scatters are used, in which case subject expertise becomes more relevant.

\section{Discussion}

Our proposed methodology can be extended in multiple directions in future work. The assumptions of Gaussian or stationary fields could be relaxed. The spatial and temporal blind source separation methodologies could be combined to obtain spatio-temporal blind source separation. If used for dimension reduction, estimators for the number of latent non-noise fields could be devised using strategies similar to those in \cite{VirtaNordhausen2019}. Additionally, the combination of spatial blind source separation with univariate kriging and univariate modelling warrants investigation.

How to choose the local covariance matrices optimally is also of interest. This is still an open problem for temporal blind source separation methods, such as second-order blind identification \citep{BelouchraniAbedMeraimCardosoMoulines:1997}. Several strategies have been suggested, see for example \citet{tang2005recovery}, and many of them could be useful also in selecting the kernels in spatial blind source separation. The estimation accuracy of our proposed method is based on how well separated the eigenvalues of the matrices $ M(f_0)^{-1/2} M(f_l) M(f_0)^{-1/2} $, $ l = 1, \ldots , k$, are. Since the connection between the eigenvalues and the unknown covariance functions is complicated, our suggestion, backed up also by the simulations, is to stay on the safe side and jointly use a large number of ring kernels. However, including large numbers of unnecessary kernels can still have the drawback of inducing some noise to the estimates. One way to remove the unneeded kernels would be to first obtain preliminary estimates for the latent fields using a large number of kernels jointly. Then, our asymptotic results could be used to select from a large collection of sets of kernels, the one which achieves the smallest value of $\delta_1 + \cdots + \delta_k $; see {\S}~\ref{subsec:simu_1}. The final estimates could then be computed with this asymptotically optimal choice of kernels. A similar technique was used in the context of temporal blind source separation in \cite{taskinen2016more}.

\section*{Acknowledgement}
The work of {Nordhausen, Ruiz-Gazen and Virta} was partly supported by the CRoNoS Cost action. The work of {Nordhausen} was also partly supported by the Austrian Science Fund. {Ruiz-Gazen acknowledges funding from the French National Research Agency (ANR) under the Investments for the Future (Investissements d’Avenir) program.} The authors are very grateful for the comments by the referees which helped considerably to improve the manuscript.
 

\appendix
\appendixone

\counterwithin{lemma}{section}
\counterwithin{proposition}{section}
\counterwithin{theorem}{section}

\section{Appendix}

\subsection{Notation}
\label{appendix:notation}

Let $ y$ and $z$ be the $np \times 1$ vectors defined by $ y_{(i-1)p + j} = Y_j( s_i)$ and $ z_{(i-1)p + j} = Z_j( s_i)$, for $i=1,\ldots,n$, $j=1,\ldots,p$. Let $ R = \mathrm{cov}( y)$ and $ R_z = \mathrm{cov}( z)$.  Let $e_b(p)$ be the $b$th base column vector of $\mathbb{R}^p$ for $b=1,\ldots,p$.
For $f: \mathbb{R}^d \to \mathbb{R}$ and for $b,l=1,\ldots,p$,  let $ T_{b,l}(f)$ be the $np \times np$ matrix, that we see as a block matrix composed of $n^2$ blocks of sizes $p^2$, and with block $i,j$ equal to $f(  s_i -  s_j) (1/2)\{e_b(p)  e_l(p)^\T+ e_l(p)  e_b(p)^\T\}$.

For $b \in \mathbb{N}$, we let $\mathcal{D}(b) = \{ 1+ (i-1)(b+1) ; i=1,\ldots,b \}$. We remark that $\{ \mathrm{vect}( M)_i; i \in \mathcal{D}(b)\} = \{ M_{i,i} ; i=1,\ldots,b \}$ for a $b \times b$ matrix $M$.
Let $\bar{\mathcal{D}}_b = \{1,\ldots,b^2\} \backslash \mathcal{D}_b$. We remark that $\{ \mathrm{vect}( M)_i; i \in \bar{\mathcal{D}}(b)\} = \{ M_{i,j} ; i,j=1,\ldots,b , i \neq j \}$ for a $b \times b$ matrix $M$.
For $a \in \{1,\ldots,b^2\}$, let $I_b(a)$ and $J_b(a)$ be the unique $i,j \in \{1,\ldots,b \}$ so that $a = b(i-1)+j$. For $i  \in \{ 1,\ldots,b\}$, let $d_b(i) = 1+(i-1)(b+1)$ and note that $\{ \mathrm{vect}( M)_{d_b(i)} ; i=1,\ldots,b \} = \{ M_{i,i} ; i=1,\ldots,b \}$ for a $b \times b$ matrix $M$. For a matrix $ M$ of size $b \times b$, recall that $\diag( M) = (M_{1,1},\ldots,M_{b,b})^\T $ 
{and that $\mathrm{tr}(M)$ denotes its trace}.

\subsection{{Expression of the matrix $V(f,f_0)$ from Proposition \ref{prop_cov_asymptotic_normality}}}
\label{appendix:expression:V}

Let $f,g: \mathbb{R}^d \to \mathbb{R}$.
{Using the notation of Appendix} \ref{appendix:notation}, let $ \Sigma(f)$  and $ \Sigma(f,g)$ be the $p^2 \times p^2$ matrices defined by, for $i = (s-1)p + t$ and $j = (u-1)p + v$, with $s,t,u,v \in \{1,\ldots,p\}$,
\[
 \Sigma(f)_{i,j} = 2n^{-1}\mathrm{tr} \left\{ R  T(f)_{s,t} R  T(f)_{u,v} \right\} \ \mbox{and} \ \Sigma(f,g)_{i,j} = 2n^{-1}\mathrm{tr} \left\{  R  T(f)_{s,t} R  T(g)_{u,v} \right\}.
\]
Let
\vspace{-5mm}
\[
 V(f,g) =
\begin{pmatrix}
 \Sigma(f) &  \Sigma(f,g) \\
 \Sigma(g,f) &  \Sigma(g)
\end{pmatrix}.
\]
Then $V(f,f_0)$ is equal to $V(f,g)$ for $g = f_0$.

\subsection{Expression of the matrix ${F_1}$ from Proposition \ref{prop_eigen}}
\label{appendix:F:two:matrices}

 From Assumption \ref{assumption:idendifiability:two:matrices:asymptotic}, there exists $n_0 \in \mathbb{N}$ such that for $n \geq n_0$ the diagonal  elements of $\Omega^{-1} M(f)\Omega^{-\T}$ are strictly decreasing. Write these diagonal elements as $\lambda_1>  \cdots> \lambda_p$. Using the notation of Appendix~\ref{appendix:notation}, for $n \geq n_0$,
let $ A$, $B$, $C$ and $D$ be {respectively} the $p^2 \times p^2$, $p^2 \times p^2$, $p \times p^2$ and $p \times p^2$ matrices defined by
\[
A_{i,j} =
\begin{cases}
 - 1/2 & ~ ~\mbox{for} ~ ~ i=j  \in \mathcal{D}(p), \\
 - \lambda_{I_p(i)} \{ \lambda_{I_p(i)} - \lambda_{J_p(i)} \}^{-1} & ~ ~\mbox{for} ~ ~ i=j \not \in \mathcal{D}(p), \\
0  & ~ ~ \mbox{otherwise}, ~ ~
\end{cases}
\]
\[
B_{i,j} =
\begin{cases}
 \{\lambda_{I_p(i)} - \lambda_{J_p(i)} \}^{-1} & ~ ~\mbox{for} ~ ~ i=j   \not \in \mathcal{D}(p), \\
0 &~ ~ \mbox{otherwise}, ~ ~
\end{cases}
\]
\[
C_{i,j} =
\begin{cases}
 - \lambda_i & ~ ~\mbox{for} ~ ~ j = d_{p}(i), \\
0 &~ ~ \mbox{otherwise} ~ ~
\end{cases}
~ ~ ~
\mbox{and}
~ ~ ~
D_{i,j} =
\begin{cases}
1 & ~ ~\mbox{for} ~ ~ j = d_{p}(i), \\
0 &~ ~ \mbox{otherwise}. ~ ~
\end{cases}
\]
Let
\vspace{-5mm}
\[
G = \begin{pmatrix}
A & B \\
C & D
\end{pmatrix}.
\]
Let $M_{\Omega^{-1}}$ and $\bar{M}_{\Omega^{-1}}$ be {respectively} the $p^2 \times p^2$ and $(p^2+p) \times (p^2 + p)$ matrices defined by
\[
( M_{\Omega^{-1}} )_{a,b}
=
\begin{cases}
( \Omega^{-1} )_{ J_p(b) , J_p(a) } & ~ ~ \mbox{if} ~ ~ I_p(a) = I_p(b), \\
0 & ~ ~ \mbox{if} ~ ~ I_p(a) \neq I_p(b)
\end{cases}
~ ~ ~
\mbox{and}
~ ~ ~
\bar{M}_{\Omega^{-1}}
=
\begin{pmatrix}
M_{\Omega^{-1}} & 0 \\
0 & I_p
\end{pmatrix}.
\]

Let $\tilde{V}(f)$ be defined as $V(f_0,f)$ but with $R$ replaced by $R_z$.
Then, for $n \geq n_0$, ${F_1}$ is defined as 
\[
{F_1} = \bar{M}_{\Omega^{-1}} G \tilde{V}(f) G^\T \bar{M}_{\Omega^{-1}}^\T.
\]

\subsection{Expression of the matrix ${F_k}$ from Proposition \ref{asymp_bssmulti}}
\label{appendix:F:k:matrices}

Let $D(f) = \Omega^{-1} M(f) \Omega^{-\T}$.
For a diagonal matrix $\Lambda$, let $\Lambda_r = \Lambda_{r,r}$. Let $A_0,A_1,\ldots,A_k$ and $B$ be $p^2 \times p^2$ matrices defined by, for $n \geq n_0$ with the notation of Assumption \ref{assumption:identifiability:k:matrices},
\[
A_{0,i,j} =
\begin{cases}
 - 1/2 & ~ ~\mbox{for} ~ ~ i=j  \in \mathcal{D}(p), \\
 - 
\sum_{l=1}^k 
\{
D(f_l)_{I_p(i)} - D(f_l)_{J_p(i)}
\}
D(f_l)_{{I_p(i)}}
  & ~ ~\mbox{for} ~ ~ i=j \not \in \mathcal{D}(p), \\
0  & ~ ~ \mbox{otherwise}, ~ ~
\end{cases}
\]
\[
A_{l,i,j} =
\begin{cases} 
D(f_l)_{I_p(i)} - D(f_l)_{J_p(i)}
  & ~ ~\mbox{for} ~ ~ i=j \not \in \mathcal{D}(p), \\
0  & ~ ~ \mbox{otherwise}, ~ ~
\end{cases}
\quad \quad \mbox{ for } l=1,\ldots,k,
\]
and
\vspace{-5mm}
\[
B_{i,j} =
\begin{cases}
 1 & ~ ~\mbox{for} ~ ~ i=j  \in \mathcal{D}(p), \\
[\sum_{l=1}^k 
\{
D(f_l)_{I_p(i)} - D(f_l)_{J_p(i)}
\}^2 ]^{-1}
  & ~ ~\mbox{for} ~ ~ i=j \not \in \mathcal{D}(p), \\
0  & ~ ~ \mbox{otherwise}. ~ ~
\end{cases}
\]
Let $G$ be the $p^2 \times (k+1)p^2$ matrix defined by $G= B
(
A_0,  A_1,  \ldots  A_k 
)$, for $n \geq n_0$. Let $M_{\Omega^{-1}}$ be as in Appendix  \ref{appendix:F:two:matrices}.
Let $\tilde{V}(f_1,\ldots,f_k)$ be the $(k+1)p^2 \times (k+1)p^2$ matrix composed of $(k+1)^2$ blocks of size $p^2 \times p^2$ with block $(i+1),(j+1)$ defined similarly as $\Sigma(f_i,f_j)$ in Appendix \ref{appendix:expression:V}, but with $R$ replaced by $R_z$.
 Then, for $n \geq n_0$,
${F_k}$ is defined as
\[
{F_k}= M_{\Omega^{-1}} G\tilde{V}(f_1,\ldots,f_k) G^\T M_{\Omega^{-1}}^\T.
\]

\subsection{Map for data application}

\begin{figure}[h]
\centering
\includegraphics[width=0.99\textwidth]{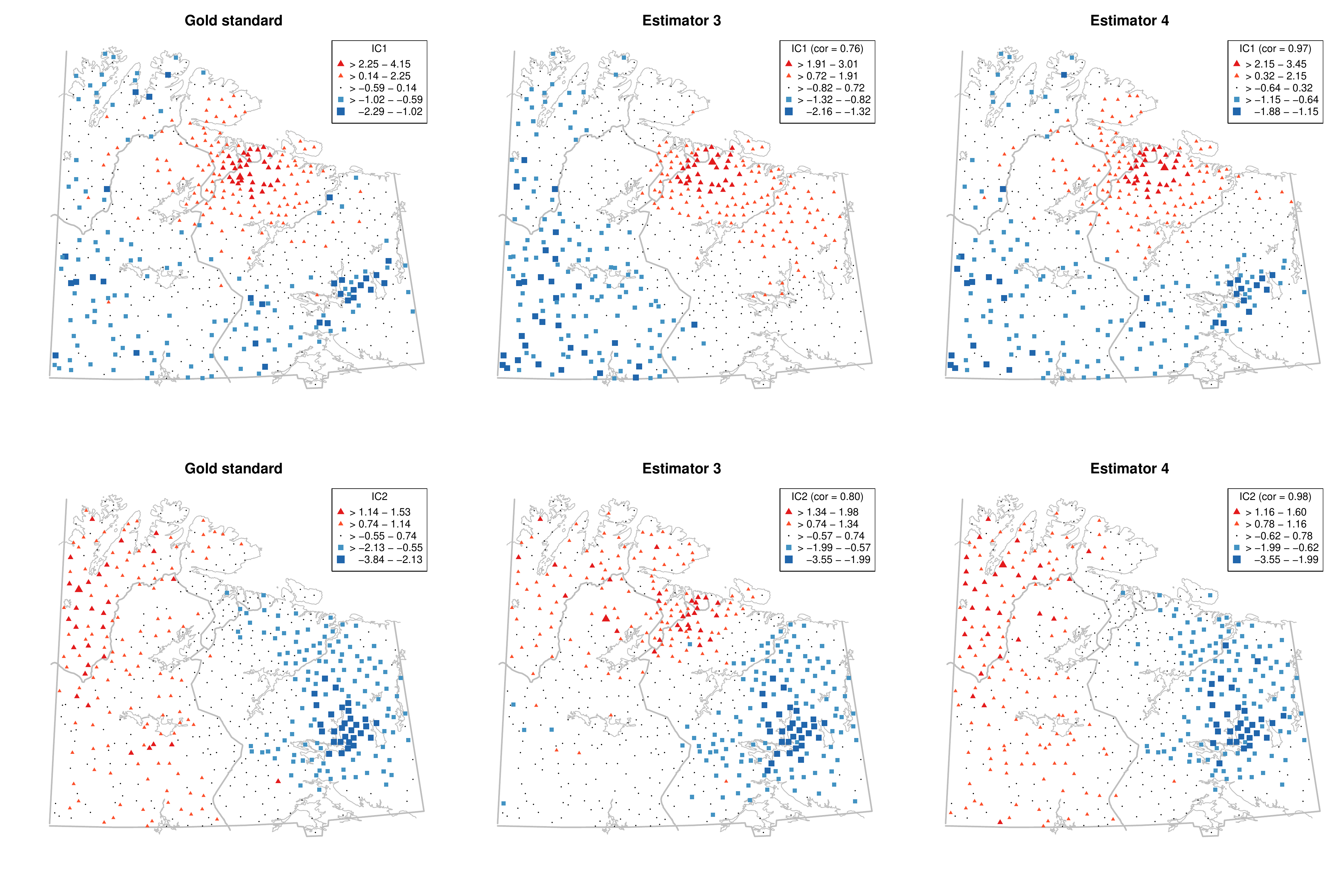}
\caption{The first two independent components from the {gold} standard and estimators 3 and 4.  
}
\label{fig:ICs_1_2}
\end{figure}


\section{Proofs}

\subsection{Introduction}

{
	We first prove Proposition \ref{prop:identifiability} in Section \ref{supplement:proof:identifiability:one}. Then Section \ref{supplement:general:results} provides general results for the proofs of Propositions \ref{prop_cov_consistency}, \ref{prop_cov_asymptotic_normality}, \ref{prop_eigen}, \ref{prop:consistency:k:matricesb} and \ref{asymp_bssmulti}.
	Section \ref{appendix:two:matrices} provides the proofs of Propositions \ref{prop_cov_consistency}, \ref{prop_cov_asymptotic_normality} and \ref{prop_eigen}.
	Section \ref{supplement:proof:identifiability:two} provides the proof of Proposition \ref{prop:identifiability:k:matrices}.
	Section \ref{appendix:more:two:matrices} provides the proofs of Propositions \ref{prop:consistency:k:matricesb} and \ref{asymp_bssmulti}.
}

{
	Propositions \ref{prop_cov_consistency}, \ref{prop_cov_asymptotic_normality}, \ref{prop_eigen}, \ref{prop:consistency:k:matricesb} and \ref{asymp_bssmulti} correspond to Proposition 
	\ref{prop:consistency}, \ref{prop:TCL:for:gamma:lambda}, \ref{prop:TCL:deux:matrices}, \ref{prop:permuting:lines:consistency} and \ref{prop:permuting:lines:asymptotic:normality}, respectively.
}

\subsection{{Proof of Proposition \ref{prop:identifiability}}}
\label{supplement:proof:identifiability:one}

We let $D(f) = \Omega^{-1} M(f) \Omega^{-\T}$ for $f: \mathbb{R}^d \to \mathbb{R}$.
{We restate Proposition \ref{prop:identifiability} and prove it.}

\begin{proposition} \label{prop:identifiability:in:supplement}
	{The unmixing problem given by $f$ is identifiable if and only if the diagonal elements of $\Omega^{-1} M(f) \Omega^{-\T}$ are {distinct}.}
\end{proposition}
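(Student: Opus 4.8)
The plan is to reparametrise the problem so that it becomes a statement about orthogonal diagonalisers of a fixed diagonal matrix. First I would record the two structural identities that follow from the model $X=\Omega Z$ and Assumptions \ref{assumption:mean:zero:Z}--\ref{assumption:cov:D:Z}: since $\E\{X(s_i)X(s_j)^\T\}=\Omega D(s_i,s_j)\Omega^\T$ with $D(s_i,s_j)$ diagonal, the weighted sums in \eqref{eq:local_cov_population} yield $M(f_0)=\Omega\Omega^\T$ (using $D(s_i,s_i)=\cov\{Z(s_i)\}=I_p$) and $\Omega^{-1}M(f)\Omega^{-\T}=D(f)$, where $D(f)=n^{-1}\sum_{i,j}f(s_i-s_j)D(s_i,s_j)$ is diagonal. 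Then, for any functional $\Gamma(f)$ as in Definition \ref{sBSSunmix}, I would set $U=\Gamma(f)\Omega$ and observe that the two diagonalisation constraints become $UU^\T=I_p$, so that $U$ is orthogonal, and $U D(f) U^\T=\Lambda(f)$, a diagonal matrix with decreasing diagonal. Since $\Gamma(f)=U\Omega^{-1}$, and since $\Gamma(f)$ has the form $PS\Omega^{-1}$ of Definition \ref{def:identifiability:two:matrices} exactly when $U$ is a signed permutation matrix, identifiability is equivalent to the purely linear-algebraic statement that every orthogonal $U$ with $U D(f) U^\T$ diagonal must be a signed permutation matrix.

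For the sufficiency direction, I would suppose the diagonal entries of $D(f)$ are distinct. Rewriting $U D(f) U^\T=\Lambda(f)$ as $D(f)U^\T=U^\T\Lambda(f)$ and reading off the columns shows that each row $u_j^\T$ of $U$ satisfies $D(f)\,u_j=\Lambda(f)_{j,j}\,u_j$; thus every row of $U$ is an eigenvector of the diagonal matrix $D(f)$. Because the diagonal entries of $D(f)$ are distinct, each eigenspace is spanned by a single coordinate direction, so $u_j$ is a signed standard basis vector; orthonormality of the rows then makes the associated index map a permutation, whence $U$ is a signed permutation matrix and $\Gamma(f)=U\Omega^{-1}$ has the required form. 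The decreasing-order convention on $\Lambda(f)$ is not needed here, but it does pin down the permutation uniquely.

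For the converse I would argue by contraposition. If two diagonal entries of $D(f)$ coincide, say $D(f)_{k,k}=D(f)_{l,l}$ with $k\neq l$, I would exhibit a valid unmixing functional that is not a signed permutation times $\Omega^{-1}$. Let $P_0$ be a permutation matrix sorting the diagonal of $D(f)$ into decreasing order, so the two equal values become adjacent, and let $Q$ be a rotation through a generic angle acting in the corresponding two-dimensional coordinate plane and as the identity elsewhere. Since $Q$ fixes the scalar block of the repeated eigenvalue, one has $Q(P_0 D(f) P_0^\T)Q^\T=P_0 D(f) P_0^\T$, which is diagonal and decreasing; hence $U=QP_0$ is orthogonal and satisfies both diagonalisation constraints, yet is not a signed permutation matrix because $Q$ is not, so $\Gamma(f)=QP_0\Omega^{-1}$ violates Definition \ref{def:identifiability:two:matrices}. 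The main obstacle is precisely this linear-algebraic core of both directions, namely the rigidity when the eigenvalues are distinct and its failure when one is repeated, together with the bookkeeping needed to respect the decreasing-order convention on $\Lambda(f)$ and the inherent sign and permutation non-uniqueness of $\Gamma(f)$.
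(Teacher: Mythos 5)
Your proof is correct and follows essentially the same route as the paper: both reduce to the orthogonal matrix $U=\Gamma(f)\Omega$, identify its rows as eigenvectors of the diagonal matrix $D(f)=\Omega^{-1}M(f)\Omega^{-\T}$ for the sufficiency direction, and construct a rotation in the two-dimensional eigenspace of a repeated diagonal entry for the converse. Your added bookkeeping with the sorting permutation $P_0$ to respect the decreasing-order convention is a minor refinement of the paper's ``without loss of generality'' step, not a different argument.
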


\begin{proof}[{of Proposition \ref{prop:identifiability:in:supplement} (Proposition \ref{prop:identifiability})}]
	We have that $\Gamma(f)$ is an unmixing functional if and only if
	\[
	\Gamma(f) \Omega D(f_0) \Omega^\T \Gamma(f)^\T =  I_p \quad \mbox{and} \quad  \Gamma(f) \Omega D(f) \Omega^\T \Gamma(f)^\T =  \Lambda(f).
	\]
	Hence the matrix $ \Gamma(f) \Omega $ is orthogonal, and its rows provide the eigenvectors of the diagonal matrix $D(f)$. If the diagonal elements of $D(f)$ are {distinct}, then the set of one-dimensional eigenspaces of $D(f)$ is unique and thus $ \Gamma(f) \Omega = P S$ where $P$ is a permutation matrix and $S$ is a diagonal matrix with diagonal elements equal to $-1$ or $1$. If there are two diagonal elements of $D(f)$ that are equal, say the first and the second without loss of generality, then consider the $p \times p$ block diagonal matrix $Q$ with first $2 \times 2$ block equal to $\{(2^{-1/2},2^{-1/2})^\T , (2^{-1/2},-2^{-1/2})^\top\}$ and second  $(p-2) \times (p-2)$ block equal to $I_{p-2}$. Then there is an unmixing functional $\Gamma(f)$ such that $\Gamma(f) \Omega  = Q$. In this case, $\Gamma(f) = Q \Omega^{-1} $, which is not of the form $P S \Omega^{-1}$.
\end{proof}

\subsection{General results} \label{supplement:general:results}
Recall that $d \in \mathbb{N}$ and $p \in \mathbb{N}$ are fixed. $Z_1,\ldots,Z_p$ are $p$ independent stationary Gaussian processes on $\mathbb{R}^d$ with zero mean functions, unit variances and covariance functions $K_1,\ldots,K_p$. We have $ Z = (Z_1,\ldots,Z_p)^\T$ and $ X = (X_1,\ldots,X_p)^\T =   \Omega  Z$ with $ \Omega$ a fixed invertible $p \times p$ matrix.

Let $ s_1,\ldots, s_n$ be the $n$ observation points in $\mathcal{S}^d$ and let $f$ be a kernel function from $\mathbb{R}^d$ into $\mathbb{R}$. We recall
\[
\widehat{ M}(f) = n^{-1} \sum_{i=1}^n 
\sum_{j=1}^n
f(  s_i -  s_j  )
X( s_i)  X( s_j)^\T.
\]
and
\[
M(f) = 
n^{-1} \sum_{i=1}^n \sum_{j=1}^n
f(   s_i -  s_j  ) \Omega D( s_i, s_j) \Omega^\T
\]
where $ D( s_i, s_j)$ is the $p \times p$ diagonal matrix defined by
\[
D( s_i, s_j)_{k,k} = K_k( s_i -  s_j).
\]

Let $|x| = \max_{i=1,\dots,m}| x_i|$ be the sup norm for $x \in \mathbb{R}^m$.
Since this norm is equivalent to the Euclidean norm,  and since we work under Assumptions \ref{assumption:expansion} to \ref{assumption:f}, we can assume without loss of generality that the following conditions hold.

\begin{condition} \label{cond:minimal:distance}
	With $\Delta>0$ defined in Assumption \ref{assumption:expansion}, for all $n \in \mathbb{N}$ and for all $a \neq b$, $a,b \in \{1,\ldots,n\}$, we have $| s_a -  s_b| \geq \Delta$.
\end{condition}

\begin{condition} \label{cond:bound:covariance}
	With $A< + \infty$ and $\alpha >0$ defined in Assumptions \ref{assumption:Kk} and \ref{assumption:f}, for all $ s \in \mathbb{R}^d$ and for all $k=1,\ldots,p$, we have
	\[
	| K_k( s) | \leq \frac{ A }{ 1 + | s|^{d + \alpha} }.
	\]
\end{condition}

\begin{condition} \label{cond:bound:weight:function}
	With $A< + \infty$ and $\alpha >0$ defined in Assumptions \ref{assumption:Kk} and \ref{assumption:f}, for all $ s \in \mathbb{R}^d$, we have
	\[
	| f( s) | \leq \frac{ A }{ 1 + | s|^{d + \alpha} }.
	\]
\end{condition}

For a matrix $M$, denote by $M_{i,j}$ the element from the $i$th row and the $j$th column of $M$.
For a vector $V_n$ or a matrix $M_n$, denote by $(V_n)_i$ the $i$th element of $V_n$ and by $(M_n)_{i,j}$  the element from the $i$th row and the $j$th column of $M_n$.
The singular values of a $n\times n$ matrix $ M$ are
denoted by $\rho_1( M)\geq\dots \geq\rho_n( M)\geq 0 $ and, in the
case when $ M$ is symmetric, the eigenvalues are denoted by
$\lambda_1( M)\geq\dots \geq\lambda_n( M)$.  The spectral norm is
given by $\rho_1( M)$ and $\| M\|_F^2=\sum_{i,j}(M_{i,j})^2 $ denotes
the Frobenius norm. 
For a sequence of random variables $X_n$, we write $X_n = o_p(1)$ when
$X_n$ converges to $0$ in probability as $n \to \infty$ and we write
$X_n = O_p(1)$ when $X_n$ is bounded in probability as $n \to \infty$. Let $ e_i(k)$ be the $i$th base column vector of $\mathbb{R}^k$. Let $ y$ be the $np \times 1$ vector defined by $ y_{(i-1)p + j} = X_j( s_i)$, for $i=1,\ldots,n$, $j=1,\ldots,p$.

\begin{lemma} \label{lem:bounded:covy}
	Under Conditions \ref{cond:minimal:distance} and \ref{cond:bound:covariance}, there exists a finite constant $C < + \infty$ so that for all $n \in \mathbb{N}$,
	\[
	\lambda_1\{\mathrm{cov}(y)\} \leq C.
	\]
\end{lemma}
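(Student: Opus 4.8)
The plan is to reduce the bound on $\lambda_1\{\mathrm{cov}(y)\}$ to a bound on the covariance matrix of the latent field, exploit the block structure coming from the independence of the components $Z_1,\dots,Z_p$, and then control each block by a packing argument that pits the separation Condition~\ref{cond:minimal:distance} against the decay Condition~\ref{cond:bound:covariance}.

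First I would pass from $y$ to the analogous vector $z$ with $z_{(i-1)p + j} = Z_j(s_i)$. Since $X = \Omega Z$ with $\Omega$ fixed, we have $y = (I_n \otimes \Omega) z$, so $\mathrm{cov}(y) = (I_n \otimes \Omega) R_z (I_n \otimes \Omega)^\T$ with $R_z = \mathrm{cov}(z)$. Hence $\lambda_1\{\mathrm{cov}(y)\} \leq \rho_1(I_n \otimes \Omega)^2 \lambda_1(R_z) = \rho_1(\Omega)^2 \lambda_1(R_z)$, where the last equality uses that the singular values of $I_n \otimes \Omega$ are those of $\Omega$. As $\rho_1(\Omega)$ does not depend on $n$, it suffices to bound $\lambda_1(R_z)$ uniformly in $n$. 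Because $Z_1,\dots,Z_p$ are independent, the entry of $R_z$ indexed by $(i,j)$ and $(i',j')$ equals $K_j(s_i - s_{i'})$ when $j = j'$ and $0$ otherwise; up to a fixed permutation of coordinates, $R_z$ is therefore block diagonal with blocks $\Sigma_1,\dots,\Sigma_p$, where $(\Sigma_k)_{i,i'} = K_k(s_i - s_{i'})$, so $\lambda_1(R_z) = \max_k \lambda_1(\Sigma_k)$. Each $\Sigma_k$ is symmetric, so by the Gershgorin bound its spectral norm does not exceed the maximum absolute row sum, and Condition~\ref{cond:bound:covariance} gives
\[
\lambda_1(\Sigma_k) \leq \max_{1 \leq i \leq n} \sum_{i'=1}^n |K_k(s_i - s_{i'})| \leq \max_{1 \leq i \leq n} \sum_{i'=1}^n \frac{A}{1 + |s_i - s_{i'}|^{d + \alpha}}.
\]

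The hard part, and the only genuinely quantitative step, will be showing that this last sum is bounded by a constant independent of $n$ and of $i$. Here I would invoke the separation Condition~\ref{cond:minimal:distance}: since the balls of radius $\Delta/2$ centred at the $s_{i'}$ are pairwise disjoint, a volume comparison shows that the number of points $s_{i'}$ with $m\Delta \leq |s_i - s_{i'}| < (m+1)\Delta$ is at most $C_d(1 + m^{d-1})$ for a constant $C_d$ depending only on $d$ and $\Delta$. Grouping the sum over these shells then yields
\[
\sum_{i'=1}^n \frac{A}{1 + |s_i - s_{i'}|^{d + \alpha}} \leq \sum_{m=0}^\infty C_d(1 + m^{d-1}) \frac{A}{1 + (m\Delta)^{d + \alpha}},
\]
whose general term decays like $m^{-1-\alpha}$, so the series converges to a finite limit $C'$ independent of $n$ and $i$. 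Combining the three steps gives $\lambda_1\{\mathrm{cov}(y)\} \leq \rho_1(\Omega)^2 C' =: C$, as required. The care needed is entirely in the packing estimate for the shell counts, together with the observation that the exponent $d + \alpha$ exceeding $d$ is exactly what makes the resulting series summable.
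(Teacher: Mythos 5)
Your proof is correct. The underlying mechanism is the same as in the paper --- polynomial decay of the covariance entries (Condition~\ref{cond:bound:covariance}) combined with the minimal separation (Condition~\ref{cond:minimal:distance}) yields uniformly bounded absolute row sums, hence a bounded spectral norm --- but you reach it by a somewhat different decomposition. The paper keeps $\mathrm{cov}(y)$ itself, bounds $|\mathrm{cov}\{X_a(s_i),X_b(s_j)\}|$ by $p\max_{k,l}(\Omega_{k,l})^2\,A/(1+|s_i-s_j|^{d+\alpha})$ directly (absorbing the mixing matrix into the constant), permutes coordinates, and then cites Lemma~6 of \citet{furrer16asymptotic} for the eigenvalue bound. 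You instead factor $\mathrm{cov}(y)=(I_n\otimes\Omega)R_z(I_n\otimes\Omega)^\T$, use $\rho_1(I_n\otimes\Omega)=\rho_1(\Omega)$ to reduce to $\lambda_1(R_z)$, exploit the block-diagonal structure of $R_z$ coming from Assumption~\ref{assumption:cov:D:Z} to reduce further to a single component, and then prove the Gershgorin-plus-shell-packing estimate from scratch rather than citing it. What your route buys is self-containment (you essentially reprove the relevant lemma of Furrer et al.) and a cleaner separation of the roles of $\Omega$ and of the latent covariances; what the paper's route buys is brevity and the fact that the entrywise bound \eqref{eq:bound:cov:fun:X} on $\mathrm{cov}(y)$ is reused elsewhere (e.g.\ in the proof of Lemma~\ref{lem:mean}). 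All the individual steps you give --- the Kronecker identity, the equality $\lambda_1(R_z)=\max_k\lambda_1(\Sigma_k)$ for the block-diagonal (after permutation) latent covariance, the volume-comparison count of $O(1+m^{d-1})$ points per shell, and the summability of $m^{d-1}/m^{d+\alpha}=m^{-1-\alpha}$ --- are valid.
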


\begin{proof}
	Let $ \ell_a^\T$ denote the $a$th row of $ \Omega$. We have
	\begin{align} \label{eq:bound:cov:fun:X}
	\left| \mathrm{cov} \{ X_a( s_i) , X_b( s_j) \} \right|
	& = \left| \mathrm{cov} \{  \ell_a^\T  Z( s_i) , \ell_b^\T Z( s_j) \} \right| \nonumber
	\\
	& = \left|  \ell_a^\T  D( s_i, s_j) \ell_b \right| \nonumber  \\
	& \leq 
	\sum_{k=1}^p | \Omega_{a,k} | | \Omega_{b,k} |
	\frac{A}{1+| s_i -  s_j|^{d+\alpha}}  
	\nonumber \\
	& \leq p \max_{k,l=1,\ldots,p} ( \Omega_{k,l} )^2
	\frac{A}{1+| s_i -  s_j|^{d+\alpha}}
	\end{align}
	from Condition \ref{cond:bound:covariance}.
	Note also that $\lambda_1\{\mathrm{cov}( y)\} = \lambda_1\{\mathrm{cov}(\tilde{ y})\}$ where $\tilde{ y}$ is the $np \times 1$ vector defined by $\tilde{ y}_{(j-1)n + i} = X_j( s_i)$ for $i=1,\ldots,n$, $j=1,\ldots,p$.
	Hence, the lemma is a direct consequence of Lemma 6 in \cite{furrer16asymptotic}.
\end{proof}

\

The next theorem provides a general multivariate central limit theorem for quadratic forms of Gaussian vectors. It extends standard central limit theorems in spatial statistics, see, e.g., \cite{bachoc14asymptotic} or \cite{istas1997quadratic}, by allowing cases where the sequence of covariance matrices is non-converging or asymptotically singular. The full proof is given for self-consistency, although some of the arguments have appeared previously.

\begin{theorem} \label{prop:general:TCL}
	Let $( y_n)$ be a sequence of $n$-dimensional centered Gaussian vectors. Let $ R_n$ be the covariance matrix of $ y_n$. Assume that for all $n$, $\lambda_1( R_n) \leq A$ where $A$ is a fixed finite constant. Let $k \in \mathbb{N}$ be fixed and let $( T_{1,n}),\ldots,( T_{k,n})$ be $k$ sequences of deterministic $n \times n$ symmetric matrices. Assume that for $i=1,\ldots,k$, for $n \in \mathbb{N}$, $\rho_1( T_{i,n} ) \leq A$. Let $ \Sigma_n$ be the $k \times k$ matrix defined for $1 \leq i,j \leq k$, by
	\[
	( \Sigma_n )_{i,j} = 2n^{-1} \mathrm{tr} \left(
	R_n  T_{i,n}  R_n  T_{j,n}
	\right).
	\]
	Let $ r_n$ be the $k$-dimensional vector defined for $i=1,\ldots,k$, by
	\[
	( r_n)_i = \mathrm{tr} \left(
	n^{-1}R_n  T_{i,n} \right).
	\]
	Let $ V_n$ be the $k \times 1$ vector defined for $1 \leq i \leq k$, by
	\[
	( V_n)_i = n^{-1}y_n^\T T_{i,n}  y_n.
	\]
	Let $Q_n$ be the probability measure of ${n}^{1/2}(  V_n -  r_n )$ on $\mathbb{R}^k$. Let $\mathcal{N}( 0 ,  \Sigma_n)$ be the Gaussian distribution on $\mathbb{R}^k$ with mean vector $ 0$ and covariance matrix $ \Sigma_n$. Let $d_w$ denote a metric generating the topology of weak convergence on the set of Borel probability measures on $\mathbb{R}^k$; for specific examples see the discussion in \cite{dudleyreal} p. 393. Then we have, for $n \to \infty$,
	\[
	d_w\{Q_n, \mathcal{N} (  0,  \Sigma_n ) \} \to 0.
	\]
\end{theorem}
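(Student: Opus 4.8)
The plan is to reduce the moving-target statement to ordinary weak convergence toward a \emph{fixed} Gaussian law by a subsequence argument, and then to establish that convergence via the Cram\'er--Wold device together with an explicit characteristic-function computation for Gaussian quadratic forms. First I would record that the entries of $\Sigma_n$ are uniformly bounded: using $|\mathrm{tr}(R_n T_{i,n} R_n T_{j,n})| \leq \|R_n T_{i,n}\|_F \, \|R_n T_{j,n}\|_F$ together with $\|R_n T_{i,n}\|_F \leq \lambda_1(R_n)\|T_{i,n}\|_F \leq A \cdot A\sqrt{n}$, one obtains $|(\Sigma_n)_{i,j}| \leq 2A^4$ for every $n$. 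Consequently, to prove $d_w\{Q_n, \mathcal{N}(0,\Sigma_n)\} \to 0$ it suffices, arguing by contradiction, to take an arbitrary subsequence and, by Bolzano--Weierstrass, extract a further subsequence along which $\Sigma_n \to \Sigma$ for some fixed symmetric positive semidefinite matrix $\Sigma$. Since $\mathcal{N}(0,\Sigma_n) \to \mathcal{N}(0,\Sigma)$ weakly along this subsequence, the triangle inequality for $d_w$ reduces the problem to showing that, along the subsequence, $n^{1/2}(V_n - r_n) \Rightarrow \mathcal{N}(0,\Sigma)$.

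For this I would invoke the Cram\'er--Wold device. Fix $\lambda \in \mathbb{R}^k$ and set $T_n = \sum_{i=1}^k \lambda_i T_{i,n}$, which is symmetric with $\rho_1(T_n) \leq A\sum_i|\lambda_i|$. Then $\lambda^\T n^{1/2}(V_n - r_n) = n^{-1/2}\{y_n^\T T_n y_n - \mathrm{tr}(R_n T_n)\}$ is a centered Gaussian quadratic form whose variance is $2n^{-1}\mathrm{tr}(R_n T_n R_n T_n) = \lambda^\T\Sigma_n\lambda \to \lambda^\T\Sigma\lambda =: \tau^2$. Writing $y_n \stackrel{d}{=} R_n^{1/2}\xi_n$ with $\xi_n \sim \mathcal{N}(0,I_n)$ and diagonalising the symmetric matrix $B_n = R_n^{1/2} T_n R_n^{1/2} = U\,\mathrm{diag}(\mu_{1,n},\ldots,\mu_{n,n})\,U^\T$, the quadratic form equals $\sum_{l=1}^n a_{l,n}(\eta_l^2 - 1)$, where $a_{l,n} = n^{-1/2}\mu_{l,n}$ and the $\eta_l$ are independent standard normals. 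Here $\mathrm{tr}(R_n T_n) = \mathrm{tr}(B_n) = \sum_l \mu_{l,n}$ supplies the correct centering, and $\sum_l a_{l,n}^2 = n^{-1}\mathrm{tr}(B_n^2) = \tfrac12\lambda^\T\Sigma_n\lambda$.

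The explicit characteristic function then gives, for fixed $t$,
\[
\log \mathrm{E}\exp\Bigl(it\textstyle\sum_l a_{l,n}(\eta_l^2-1)\Bigr) = \sum_{l=1}^n \Bigl\{-\tfrac12\log(1-2it a_{l,n}) - it a_{l,n}\Bigr\}.
\]
I would Taylor-expand each summand, using $\max_l |a_{l,n}| \leq n^{-1/2}\rho_1(B_n) \leq n^{-1/2} A^2\sum_i|\lambda_i| \to 0$, so that $2|t a_{l,n}| < 1$ for all $l$ once $n$ is large. The quadratic term contributes $-t^2\sum_l a_{l,n}^2 = -\tfrac{t^2}{2}\lambda^\T\Sigma_n\lambda \to -\tfrac{t^2}{2}\tau^2$, while the cubic-and-higher remainder is bounded by $C_t \,\max_l|a_{l,n}|\sum_l a_{l,n}^2 \to 0$. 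Hence the log-characteristic function converges to $-\tfrac{t^2}{2}\tau^2$, and L\'evy's continuity theorem yields $\lambda^\T n^{1/2}(V_n-r_n) \Rightarrow \mathcal{N}(0,\tau^2)$; as $\lambda$ is arbitrary, Cram\'er--Wold delivers the multivariate convergence.

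The main obstacle is handling the non-converging and possibly asymptotically singular matrices $\Sigma_n$, and the two devices that overcome it are the subsequence reduction, which replaces the moving target by a fixed one, and the exact characteristic-function formula, which treats the nondegenerate and degenerate cases uniformly: when $\tau^2 = 0$ the limiting log-characteristic function is identically zero, giving point mass at the origin, so no separate argument is needed. The only quantitative inputs are the uniform smallness of the weights $a_{l,n}$ and the boundedness of $\sum_l a_{l,n}^2$, and both follow immediately from the spectral bounds $\lambda_1(R_n)\le A$ and $\rho_1(T_{i,n})\le A$.
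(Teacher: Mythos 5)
Your proposal is correct and follows the same skeleton as the paper's proof: a uniform bound on the entries of $\Sigma_n$, a subsequence extraction that replaces the moving Gaussian target $\mathcal{N}(0,\Sigma_n)$ by a fixed limit $\mathcal{N}(0,\Sigma)$, the Cram\'er--Wold device, and the spectral decomposition of $R_n^{1/2}(\sum_i \lambda_i T_{i,n})R_n^{1/2}$ reducing the linear combination to a weighted sum $\sum_l a_{l,n}(\eta_l^2-1)$ of independent centred chi-squares. The one place you diverge is the final scalar limit theorem: the paper splits into cases, disposing of the degenerate case $\lambda^\T\Sigma\lambda=0$ by a variance argument and invoking the Lindeberg--Feller central limit theorem (via Lemma 2 of Istas and Lang) in the nondegenerate case, whereas you compute the exact log-characteristic function $\sum_l\{-\tfrac12\log(1-2ita_{l,n})-ita_{l,n}\}$ and Taylor-expand it, using $\max_l|a_{l,n}|\le n^{-1/2}A^2\sum_i|\lambda_i|\to 0$ and the boundedness of $\sum_l a_{l,n}^2=\tfrac12\lambda^\T\Sigma_n\lambda$ to control the remainder. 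Your route is self-contained, avoids the external citation, and treats the degenerate and nondegenerate cases uniformly, at the modest cost of a branch-of-logarithm check (harmless here since $2|ta_{l,n}|<1$ eventually); the paper's route is shorter on the page because it outsources the triangular-array CLT. Both are valid and yield the same conclusion.
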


\begin{proof}
	Assume that $d_w\{ Q_n, \mathcal{N} (  0, \Sigma_n ) \} \not \to 0$ when  $n \to \infty$. Then there exists $\epsilon>0$ fixed and a subsequence $n_m$ so that $d_w\{ Q_{n_m}, \mathcal{N} (  0,  \Sigma_{n_m} ) \} \geq \epsilon$. Let $a_1,\ldots,a_k \in \mathbb{R}$ be fixed.
	Let $ S_{n_m} =  R_{n_m}^{1/2} (\sum_{i=1}^k a_i  T_{i,n_m}) R_{n_m}^{1/2}$. We have
	\[
	\sum_{i,j=1}^k a_i a_j ( \Sigma_{n_m})_{i,j}
	=
	2 \mathrm{tr} \left(
	S_{n_m}^{2}
	\right)/n_m.
	\]
	Hence, we see that $ \Sigma_{n_m}$ is a non-negative matrix, and, from the assumptions on $( R_{n_m})$ and $( T_{i,n_m})$, that $( \Sigma_{n_m})_{i,i} \leq 2A^4$. Also, $|( r_n)_i| = | n^{-1} \mathrm{tr} (  R_n^{1/2}  T_{i,n}  R_n^{1/2} ) |  \leq A^2$. Hence, by compacity, and up to extracting a further subsequence, we can assume that $ r_{n_m} \to  r$ and $ \Sigma_{n_m} \to  \Sigma$ when ${n_m \to \infty}$. One can show simply that $d_w\{\mathcal{N}(  0,  \Sigma_{n_m} ) , \mathcal{N}(  0,  \Sigma ) \} \to 0$ when ${n_m \to \infty}$. Hence, when $n_m \to \infty$,
	\begin{align} \label{eq:for:TCL:dw:eps}
	\limsup d_w\{Q_{n_m} , \mathcal{N}( 0, \Sigma)\} \geq \epsilon.
	\end{align}
	Let us prove \eqref{eq:for:TCL:dw:eps} . We have
	\begin{align*}
	&\left| \limsup d_w\{Q_{n_m} , \mathcal{N}( 0, \Sigma)\}
	-
	\limsup d_w\{ Q_{n_m}, \mathcal{N} (  0,  \Sigma_{n_m} ) \}
	\right|
	& \\
	& \leq 
	\limsup \left|  d_w\{Q_{n_m} , \mathcal{N}( 0, \Sigma)\}
	-
	d_w\{ Q_{n_m}, \mathcal{N} (  0,  \Sigma_{n_m} ) \}
	\right| & \\
	& \leq 
	\limsup d_w\{\mathcal{N} (  0,  \Sigma_{n_m} ) , \mathcal{N}( 0, \Sigma)\}
	\\
	& = 0,
	\end{align*}
	where we have applied the triangle inequality for the metric $d_w$ in the last inequality above. Hence $\limsup d_w\{Q_{n_m} , \mathcal{N}( 0, \Sigma)\} = \limsup d_w\{Q_{n_m} , \mathcal{N}( 0, \Sigma_{n_m})\} \geq \epsilon$. Thus \eqref{eq:for:TCL:dw:eps}  is proved.

	We remark that the matrix $S_{n_m} =  R_{n_m}^{1/2} (\sum_{i=1}^k a_i  T_{i,n_m}) R_{n_m}^{1/2}$ is symmetric, because $T_{1,n_m},\ldots,T_{k,n_m}$ are assumed to be symmetric in the theorem. Hence, $S_{n_m}$ can be diagonalized and there exist a matrix $P_{n_m}$ such that $ P_{n_m}  P_{n_m}^\T =  I_{n_m}$ and a diagonal matrix $ D_{n_m}$ such that $ S_{n_m}=  P_{n_m} D_{n_m} P_{n_m}^\T$. Let also $ z_{n_m} =  R_{n_m}^{-1/2}  y_{n_m}$.
	Observe that $ z_{n_m}$ follows the $\mathcal{N}(  0,  I_{n_m} )$ distribution. We have
	\begin{align*}
	\sum_{i=1}^k a_i (V_n)_i
	& = n_m^{-1}  y_{n_m}^\T \left( \sum_{i=1}^k a_i  T_{i,n_m} \right) y_{n_m} \\
	& = n_m^{-1}  z_{n_m}^\T   S_{n_m}  z_{n_m} \\
	& = n_m^{-1} \sum_{a=1}^{n_m}  (\xi_{n_m})_a^2 \lambda_a \left(  S_{n_m} \right),
	\end{align*}
	where $\xi_{n_m}$ follows the $\mathcal{N}(  0,  I_{n_m} )$ distribution. 
	Hence letting
	\begin{align*}
	W_{n_m} & =
	{n_m}^{1/2} \{
	\sum_{i=1}^k a_i ( V_{n_m})_i
	-
	\sum_{i=1}^k a_i ( r_{n_m})_i
	\} ,
	\end{align*}
	we have
	\begin{align*}
	W_{n_m} & =  { n_m^{-1/2}} \sum_{l=1}^{n_m} \{ (\xi_{n_m})_{l}^2 - 1\} \lambda_{l} \left(  S_{n_m}
	\right).
	\end{align*}
	If $\sum_{i,j=1}^k a_i a_j  \Sigma_{i,j} = 0$, then $\sum_{i,j=1}^k a_i a_j ( \Sigma_{n_m})_{i,j} \to 0$ when $n_m \to \infty$. Hence, $2n_m^{-1} \mathrm{tr} (  S_{n_m}^2 )\linebreak[1] \to 0$ and so $\mathrm{var}( W_{n_m} ) \to 0$. 
	Hence $ W_{n_m} \to \mathcal{N}(0,0) = \mathcal{N}(0, \sum_{i,j=1}^k a_i a_j \Sigma_{i,j} )$
	in distribution when $n_m \to \infty$.
	
	Now, if  $\sum_{i,j=1}^k a_i a_j \Sigma_{i,j} >0$, one can show from the Lindeberg-Feller central limit theorem that when $n_m \to \infty$, $W_{n_m} \to \mathcal{N}(0, \sum_{i,j=1}^k a_i a_j \Sigma_{i,j} )$ in distribution, see also Lemma 2 in \cite{istas1997quadratic}.
	
	Hence, since both of the above-considered convergences in distribution hold for any $a_1,\ldots,a_k$, we have, by Cram\'er-Wold theorem, that when $n_m \to \infty$,
	${n_m}^{1/2}(  V_{n_m} -  r_{n_m} ) \to \mathcal{N}(  0 , \Sigma )$ in distribution. This is in contradiction with \eqref{eq:for:TCL:dw:eps}. Hence when ${n \to \infty}$
	\begin{align*}
	d_w\{Q_n, \mathcal{N} ( 0,  \Sigma_n ) \}\to 0.
	\end{align*}\
\end{proof}

\subsection{Asymptotics when diagonalising two matrices}
\label{appendix:two:matrices}

The next proposition gives the consistency of $\widehat{ M}(f)$.

\begin{proposition} \label{prop:consistency}
	Let Conditions \ref{cond:minimal:distance} and \ref{cond:bound:covariance} hold and let $f: \mathbb{R}^d \to \mathbb{R}$ satisfy Condition \ref{cond:bound:weight:function}. Then as $n \to \infty$,
	$\widehat{ M}(f) -  M(f) \to 0$ in probability.
\end{proposition}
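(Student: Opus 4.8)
The plan is to show that $\widehat{M}(f) - M(f)$ is centered and that each of its $p^2$ entries has variance tending to zero; consistency then follows from Chebyshev's inequality applied entrywise, since $p$ is fixed. First I would observe that $M(f) = \E\{\widehat{M}(f)\}$: because the latent fields, and hence $X$, are centered, we have $\E\{X_b( s_i) X_l( s_j)\} = \{\Omega D( s_i, s_j)\Omega^\T\}_{b,l}$, so the $(b,l)$ entry of $\widehat{M}(f) - M(f)$ is a centered random variable. It therefore suffices to prove $\mathrm{var}\{\widehat{M}(f)_{b,l}\} \to 0$ for each fixed $b,l \in \{1,\ldots,p\}$.

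Next I would write each entry as a Gaussian quadratic form in the vector $ y$ of this section, for which $ y_{(i-1)p + j} = X_j( s_i)$. One has $\widehat{M}(f)_{b,l} = n^{-1}  y^\T \tilde{ T}  y$, where $\tilde{ T}$ is the $np \times np$ block matrix whose block $(i,j)$ equals $f( s_i -  s_j)\, e_b(p) e_l(p)^\T$. Since $ y$ is centered Gaussian with covariance $ R = \mathrm{cov}( y)$, and since only the symmetric part $ T = (\tilde{ T} + \tilde{ T}^\T)/2$ contributes to the quadratic form, I would invoke the standard identity $\mathrm{var}( y^\T  T  y) = 2\,\mathrm{tr}( R  T  R  T)$ for quadratic forms of Gaussian vectors. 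Rewriting $\mathrm{tr}( R  T  R  T) = \| R^{1/2}  T  R^{1/2} \|_F^2$ and using submultiplicativity of the spectral norm against the Frobenius norm gives $\mathrm{tr}( R  T  R  T) \leq \lambda_1( R)^2 \|  T \|_F^2$. Here Lemma \ref{lem:bounded:covy} supplies a finite constant $C$ with $\lambda_1( R) \leq C$ uniformly in $n$.

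It then remains to control $\| T\|_F^2$. Each block of $ T$ has squared Frobenius norm proportional to $f( s_i -  s_j)^2$, whence $\| T\|_F^2 \leq c \sum_{i,j=1}^n f( s_i -  s_j)^2$ for an absolute constant $c$. The decisive estimate is $\sum_{i,j} f( s_i -  s_j)^2 = O(n)$: by Condition \ref{cond:bound:weight:function}, $f( s_i -  s_j)^2 \leq A^2 (1 + | s_i -  s_j|^{d+\alpha})^{-2}$, and by Condition \ref{cond:minimal:distance} the points are $\Delta$-separated, so a packing argument bounds $\sum_{j} (1 + | s_i -  s_j|^{d+\alpha})^{-2}$ by a constant independent of $i$ and $n$, the exponent $2(d+\alpha)$ exceeding $d$ so that the associated lattice-type sum is summable. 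Summing this uniform bound over the $n$ values of $i$ yields $\sum_{i,j} f( s_i -  s_j)^2 \leq C' n$.

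Combining the three bounds gives $\mathrm{var}\{\widehat{M}(f)_{b,l}\} = n^{-2}\cdot 2\,\mathrm{tr}( R  T  R  T) \leq 2 C^2 c C'\, n^{-1} \to 0$, which finishes the argument. The main obstacle is precisely the packing estimate $\sum_{i,j} f( s_i -  s_j)^2 = O(n)$: it is what reduces the naive $O(n^2)$ count of summands to $O(n)$ and thereby produces the decisive factor $n^{-1}$. It rests on simultaneously exploiting the minimal-separation Condition \ref{cond:minimal:distance}, which prevents points from accumulating near any given $ s_i$, and the polynomial decay of $f$ in Condition \ref{cond:bound:weight:function}, which makes the contribution of distant points negligible; the very same mechanism underlies Lemma \ref{lem:bounded:covy}, which I would reuse for the bound on $\lambda_1( R)$ rather than reprove.
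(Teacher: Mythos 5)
Your proposal is correct and follows essentially the same route as the paper: exact unbiasedness, reduction to the variance of each entry written as a Gaussian quadratic form $n^{-1}y^\T T_{k,l}(f)y$, the identity $\mathrm{var}(y^\T T y)=2\,\mathrm{tr}(RTRT)$, Lemma \ref{lem:bounded:covy} for $\lambda_1(R)$, and a packing estimate combining the $\Delta$-separation with the polynomial decay of $f$. The only (harmless) variation is that you bound $\mathrm{tr}(RTRT)\leq\lambda_1(R)^2\|T\|_F^2$ with $\|T\|_F^2=O(n)$, whereas the paper bounds $\rho_1\{T_{k,l}(f)\}$ via Gershgorin and then uses $\mathrm{tr}(RTRT)\leq np\,\rho_1(R)^2\rho_1(T)^2$; both hinge on the same summability mechanism.
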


\begin{proof}
	Clearly ${E} \{ \widehat{ M}(f) \} =  M(f)$. 
	Let $k,l \in \{ 1,\ldots,p\}$ be fixed.  In order to prove the proposition, it is sufficient to show that when 
	$n \to \infty$, $\mathrm{var}\{\widehat{ M}(f)_{k,l} \} \to 0$.
	
	We have,
	\begin{align*}
	\widehat{ M}(f)_{k,l} & = n^{-1} \sum_{i=1}^n   \sum_{j=1}^n  f(  s_i -  s_j)   e_k(p)^\T  X( s_i)  X( s_j)^\T  e_l(p) \\
	& = n^{-1} \sum_{i=1}^n   \sum_{j=1}^n  f(  s_i -  s_j)    X( s_i)^\T  e_k(p)  e_l(p)^\T  X( s_j)  \\
	& = n^{-1} \sum_{i=1}^n   \sum_{j=1}^n  f(  s_i -  s_j)    X( s_i)^\T 
	[
	(1/2) \{e_k(p)  e_l(p)^\T
	+ e_l(p)  e_k(p)^\T \}]
	X( s_j).
	\end{align*}
	Let $ T_{k,l}(f)$ be the $np \times np$ matrix, that we see as a block matrix composed of $n^2$ blocks of sizes $p^2$, and with block $i,j$ equal to $f(  s_i -  s_j) (1/2)\{e_k(p)  e_l(p)^\T+ e_l(p)  e_k(p)^\T\}$. We remark that $ T_{k,l}(f)$ is symmetric.
	With this notation,
	\[
	\widehat{ M}(f)_{k,l} =  n^{-1}y^\T  T_{k,l}(f) y.
	\]
	The largest singular value of $ T_{k,l}(f)$ is bounded as $n \to \infty$. Indeed, from Gershgorin's circle theorem, $\rho_1\{T_{k,l}(f)\}$ is no larger than $\max_{i=1,\ldots,np} \sum_{j=1}^{np} |  T_{k,l}(f)_{i,j}  |$. This maximum is no larger than $ \max_{i=1,\ldots,n} \sum_{j=1}^n | f(  s_i -  s_j) | $. This last quantity is bounded as $n \to \infty$ from Condition \ref{cond:bound:weight:function} and from Lemma 4 in \cite{furrer16asymptotic}.
	
	Hence $\rho_1\{ T_{k,l}(f)\}$ is bounded by a constant $B < + \infty$. Thus, using Theorem 3.2d.3 in \cite{mathai1992quadratic} and the fact that $T_{k,l}(f)$ is symmetric,
	\begin{eqnarray*}
		\mathrm{var}\{ \widehat{ M}(f)_{k,l} \}
		& = & 
		n^{-2} \mathrm{var}\{
		y^\T  T_{k,l}(f) y
		\}
		\\
		& = & 2 n^{-2}
		\mathrm{tr}
		\{
		\mathrm{cov}( y)
		T_{k,l}(f)
		\mathrm{cov}( y)
		T_{k,l}(f)
		\}
		\\
		& \leq &
		2pn^{-1} B^2 C^2,
	\end{eqnarray*}
	with $\lambda_1 \{\mathrm{cov}( y)\} \leq C$ from Lemma \ref{lem:bounded:covy}.
\end{proof}

The next proposition is a corollary of Theorem \ref{prop:general:TCL} and gives the asymptotic normality of $\widehat{ M}(f)$.

\begin{proposition} \label{prop:TCL:for:gamma:lambda}
	Let, for $k,l=1,\ldots,p$ and $f: \mathbb{R}^d \to \mathbb{R}$, $ T_{k,l}(f)$ be defined as in the proof of Proposition \ref{prop:consistency}.
	Let $ R = \mathrm{cov}( y)$ and let $ \Sigma(f)$ be the $p^2 \times p^2$ matrix defined by, for $i = (s-1)p + t$ and $j = (u-1)p + v$, with $s,t,u,v \in \{1,\ldots,p\}$,
	\[
	\Sigma(f)_{i,j} = 2n^{-1}\mathrm{tr} \left\{ R  T(f)_{s,t} R  T(f)_{u,v} \right\}.
	\]
	Define, for $g: \mathbb{R}^d \to \mathbb{R}$, $ \Sigma(f,g)$ as the $p^2 \times p^2$ matrix defined for $i = (s-1)p + t$ and $j = (u-1)p + v$, with $s,t,u,v \in \{1,\ldots,p\}$ by
	\[
	\Sigma(f,g)_{i,j} = 2n^{-1}\mathrm{tr} \left\{  R  T(f)_{s,t} R  T(g)_{u,v} \right\}.
	\]
	Let
	\[
	V(f,g) =
	\begin{pmatrix}
	\Sigma(f) &  \Sigma(f,g) \\
	\Sigma(g,f) &  \Sigma(g)
	\end{pmatrix}.
	\]
	Then, $V(f,g)$ is symmetric non-negative definite.
	
	Assume that Conditions \ref{cond:minimal:distance} and \ref{cond:bound:covariance} hold. Let $f_1,f_2 : \mathbb{R}^d \to \mathbb{R}$ satisfy Condition \ref{cond:bound:weight:function}.
	Let for $r=1,2$, $ W(f_r)$ be the vector of size $p^2 \times 1$, defined for $i = (a-1)p + b$, $a,b \in \{1,\ldots,p\}$, by  $  W(f_r)_i = {n}^{1/2} \{ \widehat{ M}(f_r)_{a,b} -  M(f_r)_{a,b} \}$.
	
	Let $Q_n$ be the distribution of $\{W(f_1)^\T , W(f_2)^\T\}^\T$.
	Then as $n \to \infty$
	\begin{equation} \label{eq:CLT:Vfunfdeux}
	d_w[Q_n, \mathcal{N}\{ 0, V(f_1,f_2)\} ]\to 0.
	\end{equation}
	Furthermore $\lambda_1\{ V(f_1,f_2) \}$ is bounded as $n \to \infty$.
\end{proposition}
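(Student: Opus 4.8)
The plan is to deduce the statement as a corollary of Theorem~\ref{prop:general:TCL}. First I would recall, from the proof of Proposition~\ref{prop:consistency}, the quadratic-form representation $\widehat{M}(f_r)_{a,b} = n^{-1} y^\T T_{a,b}(f_r) y$, where $y$ is the $np \times 1$ centered Gaussian vector with covariance matrix $R$, and that each $T_{a,b}(f_r)$ is symmetric. Because $\mathrm{E}(y^\T T y) = \mathrm{tr}(R T)$ for a centered Gaussian $y$, the centering is $M(f_r)_{a,b} = n^{-1}\mathrm{tr}\{R T_{a,b}(f_r)\}$, so that $W(f_r)_i = n^{1/2}[\,n^{-1} y^\T T_{a,b}(f_r) y - n^{-1}\mathrm{tr}\{R T_{a,b}(f_r)\}\,]$. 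Stacking the $2p^2$ quadratic forms indexed by the pairs $(a,b)$ and the two kernels $f_1,f_2$ matches exactly the vector $n^{1/2}(V_n - r_n)$ of Theorem~\ref{prop:general:TCL}, with its generic matrices taken to be the $T_{a,b}(f_r)$ and $k = 2p^2$.

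The hypotheses of Theorem~\ref{prop:general:TCL} are then checked directly: $\lambda_1(R)$ is bounded by Lemma~\ref{lem:bounded:covy}, and $\rho_1\{T_{a,b}(f_r)\}$ is bounded by the Gershgorin-circle argument already carried out in the proof of Proposition~\ref{prop:consistency} (invoking Condition~\ref{cond:bound:weight:function} and Lemma~4 in \cite{furrer16asymptotic}). The only point requiring care is that here the ambient dimension of $y$ is $np$ whereas the normalizing constant is $n$, unlike in the statement of Theorem~\ref{prop:general:TCL} where the two coincide; however, the proof of that theorem uses only the spectral bounds $\lambda_1(R)\le A$ and $\rho_1(T_i)\le A$, so with $p$ fixed every estimate there retains the same (constant) order, the sole effect being that the bound on the diagonal entries of the limiting covariance changes from $2A^4$ to $2pA^4$. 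The conclusion therefore holds with normalization $n^{-1}$, and the resulting covariance matrix, whose entries are $2n^{-1}\mathrm{tr}\{R T_{a,b}(f_r) R T_{c,d}(f_s)\}$, is by inspection precisely the block matrix $V(f_1,f_2)$; this gives \eqref{eq:CLT:Vfunfdeux}.

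It remains to establish the two algebraic claims about $V(f,g)$. Symmetry follows from the cyclic invariance of the trace together with the symmetry of $R$ and of each $T(f)_{s,t}$, which makes the diagonal blocks symmetric and yields $\Sigma(g,f) = \Sigma(f,g)^\T$. For non-negative definiteness I would write, for an arbitrary $a \in \mathbb{R}^{2p^2}$ and $U = \sum_m a_m T_m$ the corresponding symmetric combination of the $2p^2$ matrices, $a^\T V(f,g) a = 2n^{-1}\mathrm{tr}(R U R U) = 2n^{-1}\|R^{1/2} U R^{1/2}\|_F^2 \ge 0$. Finally, $V(f_1,f_2)$ is symmetric, non-negative definite, and of the fixed size $2p^2 \times 2p^2$, with each diagonal entry equal to $2n^{-1}\mathrm{tr}\{(R^{1/2} T R^{1/2})^2\} \le 2n^{-1}(np)\,\rho_1(R^{1/2} T R^{1/2})^2$, which is bounded uniformly in $n$ by the spectral bounds above; hence $\lambda_1\{V(f_1,f_2)\} \le \mathrm{tr}\{V(f_1,f_2)\}$ is bounded. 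The main obstacle is essentially careful bookkeeping: matching the $2p^2$ index pairs to the generic indexing of Theorem~\ref{prop:general:TCL} and tracking the harmless factor $p$ arising from the $np$-dimensional ambient space.
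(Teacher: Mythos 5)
Your proposal is correct and follows essentially the same route as the paper: both represent the entries of $\widehat{M}(f_r)$ as quadratic forms $n^{-1}y^\T T_{a,b}(f_r)y$ in the Gaussian vector $y$, invoke Theorem~\ref{prop:general:TCL} with the spectral bounds from Lemma~\ref{lem:bounded:covy} and the Gershgorin argument of Proposition~\ref{prop:consistency}, and conclude boundedness of $\lambda_1\{V(f_1,f_2)\}$ from boundedness of the entries. The only cosmetic differences are that the paper establishes non-negative definiteness by identifying $V(f,g)$ as the covariance matrix of the vector of quadratic forms (via the Mathai--Provost covariance formula) rather than by your direct identity $a^\T V a = 2n^{-1}\|R^{1/2}UR^{1/2}\|_F^2$, and that you explicitly track the harmless $np$-versus-$n$ dimension mismatch, which the paper leaves implicit.
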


Proposition \ref{prop_cov_asymptotic_normality} is a direct corollary of Proposition \ref{prop:TCL:for:gamma:lambda} with $f_1=f$ and $f_2=f_0$. Moreover, Proposition \ref{prop:TCL:for:gamma:lambda} gives details concerning the matrix $ V(f_1,f_2)$.

\begin{proof}
	Let $a,b \in \{1,\ldots,p\}$ and $f : \mathbb{R}^d \to \mathbb{R}$.
	We have seen in the proof of Proposition \ref{prop:consistency} that
	\[
	\widehat{ M}(f)_{a,b} =  n^{-1} y^\T  T_{a,b}(f) y.
	\]
	Hence
	\[
	{E} ( \widehat{ M}(f)_{a,b} ) =  M(f)_{a,b} = n^{-1} \mathrm{tr} \{  R  T_{a,b}(f) \}.
	\]
	Let us first prove that $V(f,g)$ is symmetric non-negative definite. Let $W(f)$ be defined as $W(f_1)$, but with $f_1$ replaced by $f$. Let $W(g)$ be defined similarly with $f_1$ replaced by $g$.
	For $a_1,\ldots,a_{p^2},b_1,\ldots,b_{p^2} \in \mathbb{R}$, we have
	\begin{eqnarray*}
		\mathrm{var} \left\{
		\sum_{i=1}^{p^2}
		a_i W(f)_{i}
		+
		\sum_{i=1}^{p^2}
		b_i W(g)_{i}
		\right\}
		& = & 
		\sum_{i,j=1}^{p^2}
		a_i a_j \mathrm{cov}
		\{
		W(f)_i , W(f)_j
		\}
		+
		\sum_{i,j=1}^{p^2}
		b_i b_j \mathrm{cov}
		\{
		W(g)_i , W(g)_j
		\}
		\\
		& & +  
		2 \sum_{i,j=1}^{p^2}
		a_i b_j \mathrm{cov}
		\{
		W(f)_i , W(g)_j
		\}.
	\end{eqnarray*}
	Now for $i,j=1,\ldots,p^2$, let $a,b,u,v \in \{1,\ldots,p\}$ such that $i=(a-1)p + b$ and $j=(u-1)p + v$. We have, using Theorem 3.2d.3 in \cite{mathai1992quadratic} and the fact that $T_{a,b}(f)$ and $T_{u,v}(f)$ are symmetric,
	\begin{eqnarray*}
		\mathrm{cov} \{
		W(f)_i , W(f)_j
		\}
		& = &
		n \mathrm{cov} \{
		\widehat{M}(f)_{a,b} , \widehat{M}(f)_{u,v}
		\}
		\\
		& = &
		n^{-1}
		\mathrm{cov} \{
		y^\T T_{a,b}(f) y
		,
		y^\T T_{u,v}(f) y
		\}
		\\
		& = &
		2 n^{-1}
		\mathrm{tr} \{
		R T_{a,b}(f) R T_{u,v}(f)
		\}
		\\
		& = &
		\Sigma(f)_{i,j}.
	\end{eqnarray*}
	We show similarly that
	\[
	\mathrm{cov} \{
	W(g)_i , W(g)_j
	\}
	=
	\Sigma(g)_{i,j}
	\]
	and that
	\[
	\mathrm{cov} \{
	W(f)_i , W(g)_j
	\}
	=
	\mathrm{cov} \{
	W(g)_j , W(f)_i
	\}
	=
	\Sigma(f,g)_{i,j}
	=
	\Sigma(g,f)_{j,i}.
	\]
	Hence 
	\begin{eqnarray*}
		\mathrm{var} \left\{
		\sum_{i=1}^{p^2}
		a_i W(f)_{i}
		+
		\sum_{i=1}^{p^2}
		b_i W(g)_{i}
		\right\}
		& = &
		\sum_{i,j=1}^{p^2}
		a_i a_j \Sigma(f)_{i,j}
		+
		\sum_{i,j=1}^{p^2}
		b_i b_j \Sigma(g)_{i,j}
		\\
		& &
		+
		\sum_{i,j=1}^{p^2}
		a_i b_j \Sigma(f,g)_{i,j}
		+
		\sum_{i,j=1}^{p^2}
		a_i b_j \Sigma(g,f)_{j,i}
		\\
		& = &
		\begin{pmatrix}
			a_1,\ldots , a_{p^2},b_1,\ldots , b_{p^2}
		\end{pmatrix}
		\begin{pmatrix}
			\Sigma(f) &  \Sigma(f,g) \\
			\Sigma(g,f) &  \Sigma(g)
		\end{pmatrix}
		\begin{pmatrix}
			a_1 \\
			\vdots \\ 
			a_{p^2} \\
			b_1 \\
			\vdots \\ 
			b_{p^2} 
		\end{pmatrix}.
	\end{eqnarray*}
	Hence, since a square matrix is uniquely defined by its corresponding quadratic forms, it follows that $V(f,g)$ is the covariance matrix of the vector
	\[
	\begin{pmatrix}
	W(f)_1, 
	\ldots  ,
	W(f)_{p^2}, 
	W(g)_1 ,
	\ldots , 
	W(g)_{p^2} 
	\end{pmatrix}^\top.
	\]
	Hence, $V(f,g)$ is symmetric non-negative definite.
	
	Let us now prove \eqref{eq:CLT:Vfunfdeux}.
	We have, from Lemma \ref{lem:bounded:covy} and the proof of Proposition \ref{prop:consistency}, that $\lambda_1(  R )$ and $\rho_1\{  T_{a,b}(f_r) \}$ are bounded as $n \to \infty$ for $r=1,2$. Hence, \eqref{eq:CLT:Vfunfdeux} is a consequence of Theorem \ref{prop:general:TCL}. Finally, $\lambda_1\{ V(f_1,f_2) \}$ is bounded as $n \to \infty$ because each component of $ V(f_1,f_2)$ is bounded as $n \to \infty$.
\end{proof}

Our objective is now to prove Proposition \ref{prop_eigen}, which is a central limit theorem for 
$\widehat{ \Gamma}(f)
-
\Omega^{-1}$
and
$\widehat{  \Lambda}(f)
- \Lambda(f)
$.

There is an equivariance property in Definition \ref{sBSSunmixest} that we will exploit. More precisely, let
$ D_0 = D (0,0) = I_p$ and let
\[
\widehat{ D}_0 = n^{-1} \sum_{i=1}^n
Z( s_i)  Z( s_i)^\T.
\]
For $f: \mathbb{R}^d \to \mathbb{R}$, let
\[
D(f) = n^{-1} \sum_{i=1}^n \sum_{j=1}^n
f(  s_i -  s_j )
D(  s_i ,  s_j )
\]
and
\[
\widehat{ D}(f) = n^{-1} \sum_{i=1}^n \sum_{j=1}^n
f(  s_i -  s_j )
Z( s_i)  Z( s_j)^\T.
\]

Let
$ \Gamma\{\widehat{ D}_0 , \widehat{ D}(f)  \}$ and $\Lambda\{\widehat{ D}_0 , \widehat{ D}(f)  \}$ satisfy the following modification of Definition \ref{sBSSunmixest}:
\begin{equation} \label{eq:bss:on:Z}
\Gamma\{\widehat{ D}_0 , \widehat{ D}(f)  \} \widehat{ D}_0  \Gamma\{\widehat{ D}_0 , \widehat{ D}(f)  \} ^\T =  I_p \quad \mbox{and} \quad \Gamma\{\widehat{ D}_0 , \widehat{ D}(f)  \}
\widehat{ D}(f)
\Gamma\{\widehat{ D}_0 , \widehat{ D}(f)  \}^\T =\Lambda\{\widehat{ D}_0 , \widehat{ D}(f)  \},
\end{equation}
where $\Lambda\{\widehat{ D}_0 , \widehat{ D}(f)  \}$ is a diagonal matrix with diagonal elements in decreasing order.
Then, we can show that
\begin{equation} \label{eq:equivariance:property}
\Gamma\{\widehat{ M}_0 , \widehat{ M}(f)  \} 
=
\Gamma\{\widehat{ D}_0 , \widehat{ D}(f)  \}\Omega^{-1}
~ ~ 
\text{and}
~ ~
\Lambda\{\widehat{ M}_0 , \widehat{ M}(f)  \}
=
\Lambda\{\widehat{ D}_0 , \widehat{ D}(f)  \}
\end{equation}
satisfy Definition \ref{sBSSunmixest}.
The above display is the equivariance property that we will exploit. That is, we will first show a central limit theorem for $\Gamma\{\widehat{ D}_0 , \widehat{ D}(f)  \} - I_p $ and $\Lambda\{\widehat{ D}_0 , \widehat{ D}(f)  \} - \Lambda(f)$ in {Lemma \ref{lem:TCL:deux:matrices:D}}. Then, we will use the equivariance property \eqref{eq:equivariance:property} to obtain, directly,  a central limit theorem for $\widehat{\Gamma}(f) - \Omega^{-1} $ and $\widehat{\Lambda}(f) - \Lambda(f)$ in Proposition \ref{prop:TCL:deux:matrices}.

In the next {lemma}, we first show a central limit theorem for $\widehat{D}_0 - I_p$ and $\widehat{D}(f) - D(f)$. Recall that $\mathrm{vect}( M) = (l_1^\T,\ldots,l_k^\T)^\T$ where $l_1^\T,\ldots,l_k^\T$ are the $k$ rows of a matrix $ M$. Recall also the notation $f_0(x) = I(x=0)$.

\begin{lemma} \label{lem:TCL:hat:D}
	Let Conditions \ref{cond:minimal:distance} and \ref{cond:bound:covariance} hold.
	Let $f: \mathbb{R}^d \to \mathbb{R}$ satisfy Condition \ref{cond:bound:weight:function}.
	Let
	\[
	Y_n = 
	\begin{pmatrix}
	{n}^{1/2} \mathrm{vect} (
	\widehat{D}_0
	-
	D_0
	)
	\\
	{n}^{1/2} \mathrm{vect} \{
	\widehat{D}(f)
	-
	D(f) \}
	\end{pmatrix}.
	\]
	Let $\tilde{V}(f)$ be as $V(f_0,f)$ in Proposition \ref{prop:TCL:for:gamma:lambda} but where $R$ is replaced by $\mathrm{cov}(z)$  where $z$ is the $np \times 1$ vector defined  for $i=1,\ldots,n$, $j=1,\ldots,p$, by $z_{(i-1)p + j} = Z_j( s_i)$.
	Let $Q_{\mbox{\small st},n}$ be the distribution of $Y_n$.
	Then we have
	\[
	d_w[ Q_{\mbox{\small st},n} , \mathcal{N}\{ 0 , \tilde{V}(f) \}] 
	\to 0.
	\]
	Furthermore, $\lambda_1(\tilde{V}(f))$ is bounded as $n \to \infty$.
\end{lemma}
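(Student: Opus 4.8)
The plan is to recognize that $\widehat{D}_0$, $\widehat{D}(f)$, $D_0$, and $D(f)$ are nothing but the local covariance matrices $\widehat{M}(f_0)$, $\widehat{M}(f)$, $M(f_0)$, and $M(f)$ of Proposition \ref{prop:TCL:for:gamma:lambda}, read off the spatial blind source separation model in the special case where the mixing matrix equals $I_p$, that is, where the observed field coincides with the latent field $Z$. Indeed, specializing the general results to $\Omega = I_p$ replaces $X$ by $Z$, $y$ by $z$, and hence $\widehat{M}(f)$ by $\widehat{D}(f)$ and $M(f)$ by $D(f)$; in particular $\widehat{M}(f_0) = \widehat{D}_0$ and $M(f_0) = D_0 = I_p$. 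The whole lemma is therefore a direct application of the already-proven Proposition \ref{prop:TCL:for:gamma:lambda} to the field $Z$.

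First I would check that $Z$ satisfies all the hypotheses of Proposition \ref{prop:TCL:for:gamma:lambda}. Condition \ref{cond:minimal:distance} concerns only the sampling locations $s_1,\ldots,s_n$ and is unaffected by passing from $X$ to $Z$. Condition \ref{cond:bound:covariance} is a bound on the covariance functions $K_1,\ldots,K_p$ of the components of $Z$, which holds by assumption. Finally, $f$ satisfies Condition \ref{cond:bound:weight:function} by hypothesis, and $f_0(x) = I(x=0)$ trivially satisfies it as well. Thus all the assumptions carry over to $Z$ verbatim.

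I would then invoke Proposition \ref{prop:TCL:for:gamma:lambda} for the field $Z$, taking $f_1 = f_0$ and $f_2 = f$. The stacked vector whose distribution is controlled there is obtained by row-vectorizing $n^{1/2}\{\widehat{M}(f_0) - M(f_0)\}$ and $n^{1/2}\{\widehat{M}(f) - M(f)\}$, which under $\Omega = I_p$ is exactly $Y_n$. The limiting covariance matrix $V(f_0,f)$ of that proposition, but with $R = \mathrm{cov}(y)$ replaced by $\mathrm{cov}(z)$, is by definition $\tilde V(f)$, so we obtain $d_w[Q_{\mbox{\small st},n}, \mathcal{N}\{0, \tilde V(f)\}] \to 0$. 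The boundedness of $\lambda_1\{\tilde V(f)\}$ as $n \to \infty$ is the last conclusion of Proposition \ref{prop:TCL:for:gamma:lambda} with $R$ replaced by $\mathrm{cov}(z)$; here one uses that $\lambda_1\{\mathrm{cov}(z)\}$ is bounded, which is Lemma \ref{lem:bounded:covy} in the special case $\Omega = I_p$ (where $y = z$), equivalently a direct consequence of Conditions \ref{cond:minimal:distance} and \ref{cond:bound:covariance} applied to $Z$.

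The statement presents no genuine difficulty, as it is a specialization of an established result to the unmixed field. The only points requiring care are bookkeeping ones: matching the block ordering $(f_0, f)$ with the ordering in $Y_n$, confirming that the row-vectorization conventions coincide, and verifying that the matrices $T_{k,l}(f)$ entering $\tilde V(f)$ are unchanged (they depend only on $f$ and the locations, not on the process), so that passing from $X$ to $Z$ affects the limiting covariance solely through the substitution $R \mapsto \mathrm{cov}(z)$.
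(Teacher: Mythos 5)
Your proposal is correct and matches the paper's own argument: the paper also proves this lemma by observing that it is Proposition \ref{prop:TCL:for:gamma:lambda} rerun verbatim with $X$ replaced by $Z$ (equivalently $\Omega = I_p$), so that $y$ becomes $z$, $R$ becomes $\mathrm{cov}(z)$, and the stacked vector of vectorized differences is exactly $Y_n$ with blocks ordered $(f_0,f)$. Your additional checks that the Conditions transfer to $Z$ and that the matrices $T_{k,l}(f)$ are unchanged are exactly the bookkeeping the paper leaves implicit.
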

\begin{proof}
	The proof is identical to the proof of Proposition \ref{prop:TCL:for:gamma:lambda}. We remark that, with the notation of the proof of Proposition \ref{prop:TCL:for:gamma:lambda}, $  W(f_r)= {n}^{1/2} \mathrm{vect} \{  \widehat{ M}(f_r) -M(f_r) \}$, for $r=1,2$.
\end{proof}

Now, we show, in {Lemma \ref{lem:DL:proba:for:TCL}}, that the transformation given by \eqref{eq:bss:on:Z}, that defines $\Gamma\{\widehat{ D}_0 , \widehat{ D}(f)  \}$ and $\Lambda\{\widehat{ D}_0 , \widehat{ D}(f)  \}$ from $\widehat{ D}_0$ and $\widehat{ D}(f)$, is asymptotically linear, so to speak. 
This will allow us to transfer the central limit theorem for $\widehat{D}_0 - I_p$ and $\widehat{D}(f) - D(f)$ into a central limit theorem for $\Gamma\{\widehat{ D}_0 , \widehat{ D}(f)  \} - I_p $ and $\Lambda\{\widehat{ D}_0 , \widehat{ D}(f)  \} - \Lambda(f)$, for an appropriate  choice of $\Gamma\{\widehat{ D}_0 , \widehat{ D}(f)  \}$.
This argument is similar to the delta method in asymptotic statistics.

We will need the following notation.
We let $\mathcal{D}(k) = \{ 1+ (i-1)(k+1) ; i=1,\ldots,k \}$. We remark that $\{ \mathrm{vect}( M)_i; i \in \mathcal{D}(k)\} = \{ M_{i,i} ; i=1,\ldots,k \}$ for a $k \times k$ matrix $M$.
Let $\bar{\mathcal{D}}_k = \{1,\ldots,k^2\} \backslash \mathcal{D}_k$. We remark that $\{ \mathrm{vect}( M)_i; i \in \bar{\mathcal{D}}(k)\} = \{ M_{i,j} ; i,j=1,\ldots,k , i \neq j \}$ for a $k \times k$ matrix $M$.
For $a \in \{1,\ldots,k^2\}$, let $I_k(a)$ and $J_k(a)$ be the unique $i,j \in \{1,\ldots,k \}$ so that $a = k(i-1)+j$. For $i  \in \{ 1,\ldots,k\}$, let $d_k(i) = 1+(i-1)(k+1)$ and note that $\{ \mathrm{vect}( M)_{d_k(i)} ; i=1,\ldots,k \} = \{ M_{i,i} ; i=1,\ldots,k \}$ for a $k \times k$ matrix $M$. For a matrix $ M$ of size $k \times k$, recall that $\diag( M) = (M_{1,1},\ldots,M_{k,k})^\T $.

\begin{lemma} \label{lem:DL:proba:for:TCL}
	Let Conditions \ref{cond:minimal:distance} and \ref{cond:bound:covariance} hold.
	Let $f: \mathbb{R}^d \to \mathbb{R}$ satisfy Condition \ref{cond:bound:weight:function}. 
	Assume that Assumption \ref{assumption:idendifiability:two:matrices:asymptotic} holds.
	Remark then that there exists $n_0 \in \mathbb{N}$ such that for $n \geq n_0$ the diagonal  elements of $ D(f)$ are strictly decreasing. Write these diagonal elements as $\lambda_1>  \ldots> \lambda_p$. For $n \geq n_0$,
	let $ A$ be the $p^2 \times p^2$ matrix defined by
	\[
	A_{i,j} =
	\begin{cases}
	- 1/2 & ~ ~\mbox{for} ~ ~ i=j  \in \mathcal{D}(p), \\
	- \lambda_{I_p(i)} \{ \lambda_{I_p(i)} - \lambda_{J_p(i)} \}^{-1} & ~ ~\mbox{for} ~ ~ i=j \not \in \mathcal{D}(p), \\
	0  & ~ ~ \mbox{otherwise}. ~ ~
	\end{cases}
	\]
	Let $ B$ be the $p^2 \times p^2$ matrix defined by
	\[
	B_{i,j} =
	\begin{cases}
	\{\lambda_{I_p(i)} - \lambda_{J_p(i)} \}^{-1} & ~ ~\mbox{for} ~ ~ i=j   \not \in \mathcal{D}(p), \\
	0 &~ ~ \mbox{otherwise}. ~ ~
	\end{cases}
	\]
	Let $ C$ be the $p \times p^2$ matrix defined by
	\[
	C_{i,j} =
	\begin{cases}
	- \lambda_i & ~ ~\mbox{for} ~ ~ j = d_{p}(i), \\
	0 &~ ~ \mbox{otherwise}. ~ ~
	\end{cases}
	\]
	Let $ D$ be the $p \times p^2$ matrix defined by
	\[
	D_{i,j} =
	\begin{cases}
	1 & ~ ~\mbox{for} ~ ~ j = d_{p}(i), \\
	0 &~ ~ \mbox{otherwise}. ~ ~
	\end{cases}
	\]
	Then, with probability going to one as $n \to \infty$, 
	there exist $ \Gamma\{\widehat{ D}_0 , \widehat{ D}(f)  \}$ and $\Lambda\{\widehat{ D}_0 , \widehat{ D}(f)  \}$ satisfying \eqref{eq:bss:on:Z}. Furthermore,  $ \Gamma\{\widehat{ D}_0 , \widehat{ D}(f)  \}$ can be chosen so that as $ n \to \infty $,
	\[
	\begin{pmatrix}
	{n}^{1/2} ( \mathrm{vect} [    \Gamma\{ \widehat{ D}_0 , \widehat{ D}(f)  \} -  I_p ] )\\
	{n}^{1/2} (  \mathrm{diag} [  \Lambda\{\widehat{ D}_0 , \widehat{ D}(f)  \}  -   D(f) ] )
	\end{pmatrix}
	=
	\begin{pmatrix}
	A & B \\
	C &  D
	\end{pmatrix}
	\begin{pmatrix}
	{n}^{1/2} \{  \mathrm{vect} ( \widehat{ D}_0 -  D_0 )  \} \\
	{n}^{1/2} [ \mathrm{vect}  \{ \widehat{ D}(f) -  D(f)  \} ]
	\end{pmatrix}
	+ o_p(1).
	\]
\end{lemma}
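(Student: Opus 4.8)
The plan is to treat the simultaneous diagonalization \eqref{eq:bss:on:Z} as an implicitly defined smooth map and to carry out a first-order, delta-method style expansion around the population point $(\widehat{D}_0, \widehat{D}(f)) = (I_p, D(f))$, at which the solution is exactly $(\Gamma, \Lambda) = (I_p, D(f))$. First I would set $E_0 = \widehat{D}_0 - I_p$ and $E_1 = \widehat{D}(f) - D(f)$ and record, from Lemma \ref{lem:TCL:hat:D}, that $n^{1/2}\mathrm{vect}(E_0)$ and $n^{1/2}\mathrm{vect}(E_1)$ are bounded in probability, so that $E_0, E_1 = O_p(n^{-1/2})$. Since the diagonal entries $\lambda_1 > \cdots > \lambda_p$ of $D(f)$ are distinct for $n \geq n_0$ and the generalized eigenvalues of the pair $(\widehat{D}_0, \widehat{D}(f))$ depend continuously on the two matrices, with probability tending to one $\widehat{D}_0$ is positive definite and these eigenvalues are distinct and in the same decreasing order; hence $\Gamma\{\widehat{D}_0, \widehat{D}(f)\}$ and $\Lambda\{\widehat{D}_0, \widehat{D}(f)\}$ satisfying \eqref{eq:bss:on:Z} exist. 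I would remove the residual sign ambiguity in the rows of $\Gamma$ by requiring its diagonal entries to be positive, which is the choice forcing $\Gamma \to I_p$.

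Next I would write $\Gamma = I_p + \Delta$ and substitute into the two equations in \eqref{eq:bss:on:Z}, grouping terms by order in $(\Delta, E_0, E_1)$. The first-order part of $\Gamma \widehat{D}_0 \Gamma^\T = I_p$ reads $\Delta + \Delta^\T + E_0 = 0$, which fixes the symmetric part of $\Delta$; on the diagonal it gives $\Delta_{ii} = -\tfrac{1}{2}(E_0)_{ii}$, and off the diagonal $\Delta_{ji} = -(E_0)_{ij} - \Delta_{ij}$. Imposing that $\Lambda$ be diagonal, the first-order part of $\Gamma \widehat{D}(f) \Gamma^\T = \Lambda$ gives, for $i \neq j$, the relation $(E_1)_{ij} + \lambda_j \Delta_{ij} + \lambda_i \Delta_{ji} = 0$. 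Eliminating $\Delta_{ji}$ and solving yields $\Delta_{ij} = \{(E_1)_{ij} - \lambda_i (E_0)_{ij}\} / (\lambda_i - \lambda_j)$, which under the vectorization is precisely the linear action of the blocks $A$ and $B$ on $\mathrm{vect}(E_0)$ and $\mathrm{vect}(E_1)$. Reading off the diagonal of $\Lambda$ then gives $\Lambda_{ii} - \lambda_i = (E_1)_{ii} - \lambda_i (E_0)_{ii}$, which matches the blocks $C$ and $D$; this identifies all the claimed linear coefficients.

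The step I expect to be most delicate is justifying that the remainder is $o_p(1)$ after the $n^{1/2}$ scaling. The cleanest route is the implicit function theorem applied to the analytic system defining $(\Delta, \mathrm{diag}\,\Lambda)$ as a function of $(E_0, E_1)$ near the origin: the relevant Jacobian is invertible precisely because $\lambda_1, \ldots, \lambda_p$ are distinct, so there is a $C^1$ solution whose differential at the origin is the matrix $\left(\begin{smallmatrix} A & B \\ C & D \end{smallmatrix}\right)$ identified above. A Taylor expansion then produces a remainder of order $\|(E_0, E_1)\|^2 = O_p(n^{-1})$, and multiplying by $n^{1/2}$ leaves $O_p(n^{-1/2}) = o_p(1)$. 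The one extra point to verify is that the branch selected by the positivity and decreasing-order conventions coincides with the implicit-function-theorem solution, which follows from the convergence $\Gamma \to I_p$ secured in the first step.
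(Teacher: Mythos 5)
Your linearization is exactly the one the paper uses: expanding $\Gamma\widehat{D}_0\Gamma^\T=I_p$ and $\Gamma\widehat{D}(f)\Gamma^\T=\Lambda$ around $(\Gamma,\Lambda)=(I_p,D(f))$ gives $\Delta_{ii}=-\tfrac12 (E_0)_{ii}$, $(\lambda_i-\lambda_j)\Delta_{ij}=(E_1)_{ij}-\lambda_i (E_0)_{ij}$ and $\Lambda_{ii}-\lambda_i=(E_1)_{ii}-\lambda_i(E_0)_{ii}$, which are precisely the entrywise identities the paper derives and which define $A$, $B$, $C$, $D$. Where you genuinely diverge is in the two surrounding steps. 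For consistency of $\widehat\Gamma$, the paper does not argue by generic continuity of the generalized eigendecomposition; it invokes a quantitative perturbation result (Lemma~4.3 of \cite{sun2002strong}) to produce an orthogonal $U_n\to I_p$ with $U_n\widehat D_0^{-1/2}\widehat D(f)\widehat D_0^{-1/2}U_n^\T$ diagonal, and then identifies $\widehat\Gamma=U_n\widehat D_0^{-1/2}$ under its sign convention (row sums nonnegative, rather than your positive-diagonal convention; both work). For the remainder, the paper does not use the implicit function theorem at all: it keeps the exact identities, uses the $O_p(n^{-1/2})$ rates from Lemma~\ref{lem:TCL:hat:D} together with $\widehat\Gamma\to I_p$ to absorb the higher-order terms into $o_p(1)$, and wraps everything in a proof by contradiction with subsequence extraction. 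Your IFT/Taylor route is cleaner in that it gives an explicit $O_p(n^{-1})$ remainder, whereas the paper's route avoids having to set up a smooth parametrization of the solution branch.

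The one point you must make explicit for your route to close is that $D(f)$, and hence the eigenvalue gaps $\lambda_i-\lambda_j$ and the IFT solution map itself, depend on $n$ and are not assumed to converge. Your Jacobian-invertibility and quadratic-remainder bounds therefore need to be uniform in $n$: the $2\times 2$ systems in $(\Delta_{ij},\Delta_{ji})$ have determinant $\lambda_j-\lambda_i$, so uniform invertibility follows from Assumption~\ref{assumption:idendifiability:two:matrices:asymptotic} (gaps bounded below by some $\delta>0$ for large $n$), and the second-derivative bounds are uniform because $\lambda_1\{D(f)\}$ is bounded (as in the proof of Proposition~\ref{prop:consistency}). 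Without saying this, the statement ``a Taylor expansion produces a remainder of order $\|(E_0,E_1)\|^2$'' has an implicit constant that could a priori blow up with $n$. The paper sidesteps exactly this issue by passing to subsequences along which $D(f)\to D_\infty(f)$; you should either do the same or state the uniform bounds. With that addition your argument is complete.
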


\begin{proof}
	Let us assume that $n \geq n_0$ throughout the proof.
	From Proposition \ref{prop:consistency},
	with probability going to one, the eigenvalues of $D_0^{-1/2}\widehat{D}(f) D_0^{-1/2}$ are {distinct}. In the rest of the proof, we set ourselves on the event when this is the case.
	Then, choose $\widehat{ \Gamma} =  \Gamma\{ \widehat{ D}_0 , \widehat{ D}(f) \}$ and $\widehat{ \Lambda} =  \Lambda\{ \widehat{ D}_0 , \widehat{ D}(f) \}$ satisfying \eqref{eq:bss:on:Z} and such that
	\begin{equation} \label{eq:unicity:condition}
	\sum_{j=1}^p 
	\widehat{\Gamma}_{i,j} \geq 0 \;\;  (i=1,\ldots,p).
	\end{equation}
	We remark that $\widehat{ \Gamma}$ and $\widehat{ \Lambda} $ indeed exist, since when $\Gamma\{ \widehat{ D}_0 , \widehat{ D}(f) \}$ and $  \Lambda\{ \widehat{ D}_0 , \widehat{ D}(f) \}$  satisfy \eqref{eq:bss:on:Z}, one can multiply each row of $\Gamma\{ \widehat{ D}_0 , \widehat{ D}(f) \} $ by $1$ or $-1$ and still satisfy \eqref{eq:bss:on:Z}.

	Let
	\[
	T_1 =
	\begin{pmatrix}
	{n}^{1/2} \mathrm{vect} (   \widehat{ \Gamma}  -  I_p )\\
	{n}^{1/2} \mathrm{diag} \{   \widehat{ \Lambda} -  D(f) \} 
	\end{pmatrix}
	\]
	and
	\[
	T_2 =
	\begin{pmatrix}
	A &  B \\
	C &  D
	\end{pmatrix}
	\begin{pmatrix}
	{n}^{1/2} \{  \mathrm{vect} (   \widehat{ D}_0 - D_0 ) \} \\
	{n}^{1/2} [  \mathrm{vect} \{   \widehat{ D}(f) -  D(f)  \} ]
	\end{pmatrix}.
	\]
	Assume that $ T_1 -  T_2 \not \to 0$ in probability when $n \to \infty$.
	Then there exist $\epsilon >0$ and a subsequence $n_m \to \infty$ so that along $n_m$
	\begin{equation} \label{eq:for:gamma:lambda:ABCD:absurd}
	P( | T_1 -  T_2| \geq \epsilon ) \geq \epsilon.
	\end{equation}
	
	One can show, as for the proof of Proposition \ref{prop:consistency}, that $\limsup \lambda_1\{  D(f) \} < + \infty$ when $n \to \infty$. Hence, up to extracting a further subsequence, we can assume that when $n_m \to \infty$, $ D(f) \to D_{\infty}(f)$, where $ D_{\infty}(f)$ has {distinct}, decreasing, eigenvalues.
	
	From Lemma 4.3 in \cite{sun2002strong}, since $D_0^{-1/2} D_{\infty}(f) D_0^{-1/2} = D_{\infty}(f)$ is diagonal, there exists a sequence of random orthogonal matrices $U_n$ such that $U_{n_m} \widehat{D}_0^{-1/2} \widehat{D}(f) \widehat{D}_0^{-1/2} U_{n_m}^\T = \Lambda_{n_m}$ is diagonal and goes to $ D_{\infty}(f)$ in probability and so that $U_{n_m} \to I_p$ in probability when $n_m \to \infty$.  
	Hence, the pair $(U_{n_m} \widehat{D}_0^{-1/2} , \Lambda_{n_m})$ satisfies \eqref{eq:bss:on:Z} with probability going to one. Furthermore, all the matrices $\Gamma\{ \widehat{ D}_0 , \widehat{ D}(f) \}$ for which $\Gamma\{ \widehat{ D}_0 , \widehat{ D}(f) \}$ and $\widehat{\Lambda}$ satisfy \eqref{eq:bss:on:Z} satisfy $\Gamma\{ \widehat{ D}_0 , \widehat{ D}(f) \}  = S U_{n_m} \widehat{D}_0^{-1/2}$ where $S$ is a diagonal matrix with diagonal elements equal to $-1$ or $1$. Hence with probability going to one, we must have $S = I_p$ for \eqref{eq:unicity:condition} to be also satisfied. Hence, with probability going to one,  $\widehat{\Gamma} = U_{n_m} \widehat{D}_0^{-1/2}$. 
	Hence we have finally obtained $\widehat{\Gamma} \to I_p$  and $| \widehat{\Lambda} - D(f) | \to 0 $ in probability when $n_m \to \infty$.

	The rest of the proof is similar to those given in \cite{ilmonen2010characteristics} and \cite{MiettinenNordhausenOjaTaskinen2012}.
	By definition of $\widehat{\Gamma}$ and $\widehat{\Lambda}$, we have
	\begin{equation*}
	\widehat{\Gamma} \widehat{D}_0 \widehat{\Gamma}^\T = I_p
	~ ~ \mbox{and} ~ ~
	\widehat{\Gamma} \widehat{D}(f) \widehat{\Gamma}^\T = \widehat{\Lambda}.
	\end{equation*}
	
	Hence
	\begin{eqnarray*}
		& ( \widehat{\Gamma} - I_p ) \widehat{D}_0 \widehat{\Gamma}^\T
		+ ( \widehat{D}_0 - I_p ) \widehat{\Gamma}^\T + ( \widehat{\Gamma} - I_p)^\T = 0  ~ ~ \mbox{and} \\
		& ( \widehat{\Gamma} - I_p ) \widehat{D}(f) \widehat{\Gamma}^\T
		+ \{\widehat{D}(f) - D(f)\} \widehat{\Gamma}^\T + D(f) ( \widehat{\Gamma} - I_p)^\T = \widehat{\Lambda} - D(f).
	\end{eqnarray*}
	Also, from {Lemma~\ref{lem:TCL:hat:D}}, we have 
	${n_m}^{1/2} ( \widehat{D}_0 - I_p) = O_p(1)$ and ${n_m}^{1/2} \{\widehat{D}(f) - D(f)\} = O_p(1)$. Thus, we get
	\begin{eqnarray*}
		{n_m}^{1/2} ( \widehat{D}_0 - I_p )
		&=& - {n_m}^{1/2} ( \widehat{\Gamma} - I_p)
		- {n_m}^{1/2}  ( \widehat{\Gamma} - I_p)^\T + o_p(1)
		~ ~ \mbox{and} \\
		{n_m}^{1/2} \{ \widehat{D}(f) - D(f) \}
		&= &- {n_m}^{1/2} ( \widehat{\Gamma} - I_p)  D(f)
		- {n_m}^{1/2} D(f)  ( \widehat{\Gamma} - I_p)^\T
		+ {n_m}^{1/2}  \{ \widehat{\Lambda} - D(f) \}
		+ o_p(1).
	\end{eqnarray*}
	This then yields
	\begin{eqnarray*}
		& {n_m}^{1/2} ( \widehat{\Gamma}_{ii} - 1 )
		= - \frac{1}{2} {n_m}^{1/2} ( \widehat{D}_{0,i,i} - 1)
		+ o_p(1) \\
		& (\lambda_i - \lambda_j)  {n_m}^{1/2} \widehat{\Gamma}_{i,j}
		=  {n_m}^{1/2} \widehat{D}(f)_{i,j}
		- \lambda_i {n_m}^{1/2} \widehat{D}_{0,i,j} + o_p(1),
		~ ~ \mbox{$i \neq j$, and} \\
		&  {n_m}^{1/2} ( \widehat{\Lambda}_{i,i} - \lambda_i )
		=  {n_m}^{1/2} \{\widehat{D}(f)_{i,i} - \lambda_i  \}
		- \lambda_i {n_m}^{1/2} ( \widehat{D}_{0,i,i} - 1) + o_p(1).
	\end{eqnarray*}
	This is in contradiction with \eqref{eq:for:gamma:lambda:ABCD:absurd}, by definition of $A$, $B$, $C$ and $D$. Hence the proof is finished.
\end{proof}

From {Lemmas~\ref{lem:TCL:hat:D} and \ref{lem:DL:proba:for:TCL}}, we now obtain a central limit theorem for $\Gamma\{\widehat{ D}_0 , \widehat{ D}(f)  \} - I_p $ and $\Lambda\{\widehat{ D}_0 , \widehat{ D}(f)  \} - \Lambda(f)$. Note that $\Lambda(f) = D(f)$.

\begin{lemma} \label{lem:TCL:deux:matrices:D}
	Assume the same conditions as in {Lemma~\ref{lem:DL:proba:for:TCL}} and let $n_0$ be defined as in {Lemma~\ref{lem:DL:proba:for:TCL}}. Let, for $n \geq n_0$,
	\[
	G = \begin{pmatrix}
	A & B \\
	C & D
	\end{pmatrix},
	\]
	from {Lemma~\ref{lem:DL:proba:for:TCL}}. 
	For $\Gamma\{\widehat{D}_0 , \widehat{ D}(f) \}$ and $\Lambda\{\widehat{D}_0 , \widehat{ D}(f) \}$ satisfying \eqref{eq:bss:on:Z},
	let
	\[
	X_n = {n}^{1/2}
	\begin{pmatrix}
	\mathrm{vect} [
	\Gamma\{\widehat{D}_0 , \widehat{ D}(f) \}
	-
	I_p
	]
	\\
	\mathrm{diag} [
	\Lambda\{\widehat{M}_0 , \widehat{ M}(f) \}
	-
	D(f)
	]
	\end{pmatrix}.
	\]
	Let $Q_n$ be the distribution of $X_n$.
	Let $\tilde{V}(f)$ be 
	defined as in {Lemma~\ref{lem:TCL:hat:D}}.
	Then, we can choose $\Gamma\{\widehat{D}_0 , \widehat{ D}(f) \}$ and $\Lambda\{ \widehat{D}_0 , \widehat{ D}(f) \}$ satisfying \eqref{eq:bss:on:Z} such that when $n\to \infty$,
	\[
	d_w\{Q_n , \mathcal{N}( 0,G \tilde{V}(f) G^\T ) \} \to 0.
	\]
\end{lemma}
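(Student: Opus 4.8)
The plan is to combine the two preceding lemmas through a subsequence argument of the same type as in the proof of Theorem~\ref{prop:general:TCL}. Note first that, by the equivariance relation \eqref{eq:equivariance:property}, $\Lambda\{\widehat{M}_0,\widehat{M}(f)\}=\Lambda\{\widehat{D}_0,\widehat{D}(f)\}$, so that the vector $X_n$ is exactly the left-hand side appearing in Lemma~\ref{lem:DL:proba:for:TCL}. I would therefore fix the choice of $\Gamma\{\widehat{D}_0,\widehat{D}(f)\}$ and $\Lambda\{\widehat{D}_0,\widehat{D}(f)\}$ to be the one singled out there (normalised by the sign condition \eqref{eq:unicity:condition}). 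With $Y_n$ the vector of Lemma~\ref{lem:TCL:hat:D} and $G$ the matrix in the statement, Lemma~\ref{lem:DL:proba:for:TCL} then gives directly $X_n = G Y_n + o_p(1)$, so the task reduces to pushing the central limit theorem of Lemma~\ref{lem:TCL:hat:D} for $Y_n$ through the linear map $G$ and absorbing the $o_p(1)$ remainder, all while keeping track of the fact that both $G$ and $\tilde{V}(f)$ depend on $n$ and need not converge.

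Before the limiting argument I would record two boundedness facts. The matrix $G$, that is $A,B,C,D$, is bounded uniformly in $n\geq n_0$: the diagonal elements $\lambda_1>\cdots>\lambda_p$ of $D(f)$ are bounded above since $\limsup\lambda_1\{D(f)\}<\infty$ (as in the proof of Proposition~\ref{prop:consistency}), while the eigengaps $|\lambda_{I_p(i)}-\lambda_{J_p(i)}|$ are bounded away from zero by Assumption~\ref{assumption:idendifiability:two:matrices:asymptotic}, so the entries $\{\lambda_{I_p(i)}-\lambda_{J_p(i)}\}^{-1}$ stay bounded. In addition $\lambda_1\{\tilde{V}(f)\}$ is bounded by Lemma~\ref{lem:TCL:hat:D}.

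For the main step I would argue by contradiction. Suppose $d_w\{Q_n,\mathcal{N}(0,G\tilde{V}(f)G^\T)\}\not\to0$; then some $\epsilon>0$ and a subsequence $n_m$ satisfy $d_w\{Q_{n_m},\mathcal{N}(0,G\tilde{V}(f)G^\T)\}\geq\epsilon$. Using the boundedness above and compactness, I would extract a further subsequence along which $G\to G_\infty$ and $\tilde{V}(f)\to\tilde{V}_\infty$. Along it, Lemma~\ref{lem:TCL:hat:D} together with $\mathcal{N}(0,\tilde{V}(f))\to\mathcal{N}(0,\tilde{V}_\infty)$ and the triangle inequality for $d_w$ give $Y_{n_m}\to\mathcal{N}(0,\tilde{V}_\infty)$ weakly, so that $\{Y_{n_m}\}$ is tight. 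Writing $G Y_{n_m}=G_\infty Y_{n_m}+(G-G_\infty)Y_{n_m}$, the first term converges weakly to $\mathcal{N}(0,G_\infty\tilde{V}_\infty G_\infty^\T)$ by the continuous mapping theorem, and the second is $o_p(1)$ because $G-G_\infty\to0$ and $Y_{n_m}$ is tight; with $X_{n_m}=GY_{n_m}+o_p(1)$, Slutsky's theorem then yields $X_{n_m}\to\mathcal{N}(0,G_\infty\tilde{V}_\infty G_\infty^\T)$ weakly. Since also $\mathcal{N}(0,G\tilde{V}(f)G^\T)\to\mathcal{N}(0,G_\infty\tilde{V}_\infty G_\infty^\T)$, both $Q_{n_m}$ and the target Gaussian converge in $d_w$ to the same fixed law, whence $d_w\{Q_{n_m},\mathcal{N}(0,G\tilde{V}(f)G^\T)\}\to0$ by the triangle inequality, contradicting the choice of the subsequence.

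I expect the main obstacle to be precisely the bookkeeping forced by the moving target law: because $d_w$ is only assumed to generate the weak topology, one cannot treat the linear push-forward as continuous in a uniform sense between two non-convergent sequences of distributions, and the compactness-plus-subsequence device, which reduces everything to convergence toward a \emph{fixed} Gaussian where $d_w$-closeness is equivalent to weak convergence, is what makes the argument rigorous. The only other point requiring care is verifying that the remainder $(G-G_\infty)Y_{n_m}$ is negligible, which rests on the tightness obtained from the weak limit of $Y_{n_m}$.
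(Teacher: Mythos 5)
Your proposal is correct and follows essentially the same route as the paper's own proof, which likewise combines Lemma~\ref{lem:TCL:hat:D} and Lemma~\ref{lem:DL:proba:for:TCL} and then argues by contradiction, extracting subsequences along which the bounded sequences $G$ and $\tilde{V}(f)$ converge and applying Slutsky's lemma. Your write-up simply supplies the details (boundedness of the entries of $G$ via the uniform eigengap from Assumption~\ref{assumption:idendifiability:two:matrices:asymptotic}, tightness of $Y_{n_m}$, and the final triangle inequality) that the paper leaves implicit.
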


\begin{proof}
	The {lemma} is a direct consequence of {Lemmas~\ref{lem:TCL:hat:D} and \ref{lem:DL:proba:for:TCL}}. The proof is carried out by contradiction, by taking subsequences along which the bounded sequences of matrices $G$ and $\tilde{V}(f)$ converge, and by applying Slutsky's lemma.
\end{proof}

We now use the equivariance property \eqref{eq:equivariance:property} to conclude.

\begin{proposition} \label{prop:TCL:deux:matrices}
	Assume the same conditions as in {Lemma~\ref{lem:DL:proba:for:TCL}}. 
	Let $\tilde{V}(f)$ and  $G$ be defined as in {Lemmas~\ref{lem:TCL:hat:D} and \ref{lem:TCL:deux:matrices:D}}.
	Let $M_{\Omega^{-1}}$ be the $p^2 \times p^2$ matrix defined by
	\[
	( M_{\Omega^{-1}} )_{a,b}
	=
	\begin{cases}
	( \Omega^{-1} )_{ J_p(b) , J_p(a) } & ~ ~ \mbox{if} ~ ~ I_p(a) = I_p(b), \\
	0 & ~ ~ \mbox{if} ~ ~ I_p(a) \neq I_p(b).
	\end{cases}
	\]
	Let $\bar{M}_{\Omega^{-1}}$ be the matrix of size $(p^2+p) \times (p^2 + p)$ defined by
	\[
	\bar{M}_{\Omega^{-1}}
	=
	\begin{pmatrix}
	M_{\Omega^{-1}} & 0 \\
	0 & I_p
	\end{pmatrix}.
	\]
	For $\Gamma\{\widehat{M}_0,\widehat{M}(f)\}$ and $\Lambda\{\widehat{M}_0 , \widehat{ M}(f) \}$ satisfying Definition \ref{sBSSunmixest}, let
	\[
	X_n = {n}^{1/2}
	\begin{pmatrix}
	\mathrm{vect} [
	\Gamma\{\widehat{M}_0 , \widehat{ M}(f) \}
	-
	\Omega^{-1}
	]
	\\
	\mathrm{diag} [
	\Lambda\{\widehat{M}_0 , \widehat{ M}(f) \}
	-
	\Lambda(f)
	]
	\end{pmatrix}.
	\]
	Let $Q_n$ be the distribution of $X_n$. Let, for $n \geq n_0$ with $n_0$ as in {Lemma~\ref{lem:DL:proba:for:TCL}},
	\[
	F = \bar{M}_{\Omega^{-1}} G \tilde{V}(f) G^\T \bar{M}_{\Omega^{-1}}^\T.
	\]
	Then, we can choose $\Gamma\{\widehat{M}_0 , \widehat{ M}(f) \},\Lambda\{ \widehat{M}_0 , \widehat{ M}(f) \}$, satisfying Definition \ref{sBSSunmixest}, so that when $n\to \infty$,
	\[
	d_w\{Q_n , \mathcal{N}( 0,F ) \} \to 0.
	\]
\end{proposition}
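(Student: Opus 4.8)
The plan is to obtain the result directly from the central limit theorem already established in Lemma~\ref{lem:TCL:deux:matrices:D} for the latent quantities $\Gamma\{\widehat{D}_0,\widehat{D}(f)\} - I_p$ and $\Lambda\{\widehat{D}_0,\widehat{D}(f)\} - D(f)$, and to transport it to the observed quantities through the equivariance property \eqref{eq:equivariance:property}. The key observation is that, once we take $\Gamma\{\widehat{M}_0,\widehat{M}(f)\} = \Gamma\{\widehat{D}_0,\widehat{D}(f)\}\Omega^{-1}$ and $\Lambda\{\widehat{M}_0,\widehat{M}(f)\} = \Lambda\{\widehat{D}_0,\widehat{D}(f)\}$, which satisfy Definition~\ref{sBSSunmixest} by \eqref{eq:equivariance:property}, the vector $X_n$ of the proposition becomes an \emph{exact} fixed linear image of the vector appearing in Lemma~\ref{lem:TCL:deux:matrices:D}. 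Since $\Omega$ is fixed, this linear map does not depend on $n$, so the Gaussian approximation should transfer without difficulty.

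First I would record the algebraic identity that produces $\bar{M}_{\Omega^{-1}}$. Writing $H = \Gamma\{\widehat{D}_0,\widehat{D}(f)\} - I_p$ with rows $h_1^\T,\ldots,h_p^\T$, the equivariance property gives $\Gamma\{\widehat{M}_0,\widehat{M}(f)\} - \Omega^{-1} = H\Omega^{-1}$, so I must express the row vectorization of $H\Omega^{-1}$ in terms of that of $H$. A direct index computation shows that, for $a = p\{I_p(a)-1\} + J_p(a)$, the $a$th entry of $\mathrm{vect}(H\Omega^{-1})$ equals $\sum_{j=1}^p (\Omega^{-1})_{j,J_p(a)}\,(h_{I_p(a)})_j$, which is exactly $\{M_{\Omega^{-1}}\,\mathrm{vect}(H)\}_a$ by the definition of $M_{\Omega^{-1}}$, because the nonzero entries $(M_{\Omega^{-1}})_{a,b} = (\Omega^{-1})_{J_p(b),J_p(a)}$ occur precisely for the indices $b$ with $I_p(b) = I_p(a)$. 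Hence $\mathrm{vect}(H\Omega^{-1}) = M_{\Omega^{-1}}\,\mathrm{vect}(H)$. For the diagonal block, the equivariance property gives $\Lambda\{\widehat{M}_0,\widehat{M}(f)\} = \Lambda\{\widehat{D}_0,\widehat{D}(f)\}$ together with $\Lambda(f) = D(f)$, so this block is left unchanged, which accounts for the identity block $I_p$ in $\bar{M}_{\Omega^{-1}}$. Stacking the two blocks, $X_n = \bar{M}_{\Omega^{-1}} X_n^{D}$, where $X_n^{D}$ denotes the vector from Lemma~\ref{lem:TCL:deux:matrices:D}.

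Then I would transfer the limit. By Lemma~\ref{lem:TCL:deux:matrices:D}, for the chosen representatives one has $d_w$-convergence of the distribution of $X_n^{D}$ to $\mathcal{N}\{0, G\tilde{V}(f)G^\T\}$, and $F = \bar{M}_{\Omega^{-1}} G\tilde{V}(f)G^\T \bar{M}_{\Omega^{-1}}^\T$ by definition. To make the passage through a fixed linear map rigorous in the moving-target $d_w$ framework, I would argue by contradiction exactly as in the proofs of Theorem~\ref{prop:general:TCL} and Lemma~\ref{lem:TCL:deux:matrices:D}: if $d_w\{Q_n, \mathcal{N}(0,F)\} \not\to 0$, pass to a subsequence on which it stays at least $\epsilon > 0$; the matrices $G$ and $\tilde{V}(f)$ are bounded, the latter by Lemma~\ref{lem:TCL:hat:D} and the former because Assumption~\ref{assumption:idendifiability:two:matrices:asymptotic} keeps the eigenvalues $\lambda_i$ bounded and the gaps $\lambda_i - \lambda_j$ bounded away from zero, so along a further subsequence $G\tilde{V}(f)G^\T \to \Sigma_\infty$ and hence $F \to \bar{M}_{\Omega^{-1}}\Sigma_\infty\bar{M}_{\Omega^{-1}}^\T$. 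Along this subsequence $X_n^{D}$ converges weakly to $\mathcal{N}(0,\Sigma_\infty)$, so by the continuous mapping theorem $X_n = \bar{M}_{\Omega^{-1}} X_n^{D}$ converges weakly to $\mathcal{N}(0, \bar{M}_{\Omega^{-1}}\Sigma_\infty\bar{M}_{\Omega^{-1}}^\T)$, contradicting the lower bound.

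I expect the substantive content to be the vectorization identity $\mathrm{vect}(H\Omega^{-1}) = M_{\Omega^{-1}}\,\mathrm{vect}(H)$, whose verification is a routine but careful index computation, while the only mildly delicate point is the passage through the moving-target $d_w$ framework. The latter introduces no new idea, since it is handled by the same subsequence-and-boundedness technique already used repeatedly in the preceding proofs; everything else is a direct consequence of the equivariance property \eqref{eq:equivariance:property} and Lemma~\ref{lem:TCL:deux:matrices:D}.
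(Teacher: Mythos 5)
Your proposal is correct and follows essentially the same route as the paper: the paper's proof also reduces the statement to Lemma~\ref{lem:TCL:deux:matrices:D} via the equivariance property \eqref{eq:equivariance:property} and the exact identity $X_n = \bar{M}_{\Omega^{-1}} X_n^{D}$, merely stating this identity without the index verification and the subsequence argument that you spell out. Your added details (the check that $\mathrm{vect}(H\Omega^{-1}) = M_{\Omega^{-1}}\mathrm{vect}(H)$ and the boundedness-plus-subsequence passage through $d_w$) are accurate elaborations of what the paper leaves implicit.
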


\begin{proof}
	The proof directly follows from {Lemma~\ref{lem:TCL:deux:matrices:D}} and from \eqref{eq:equivariance:property}. Indeed, for $\Gamma\{\widehat{D}_0 , \widehat{ D}(f) \}$ and $\Lambda\{ \widehat{D}_0 , \widehat{ D}(f) \}$ satisfying the central limit theorem in {Lemma~\ref{lem:TCL:deux:matrices:D}}, we can choose $\Gamma\{\widehat{M}_0 , \widehat{ M}(f) \}$ and $\Lambda\{ \widehat{M}_0 , \widehat{ M}(f) \}$ satisfying Definition \ref{sBSSunmixest} and such that
	\[
	\begin{pmatrix}
	\mathrm{vect} [
	\Gamma\{\widehat{M}_0 , \widehat{ M}(f) \}
	-
	\Omega^{-1}
	]
	\\
	\mathrm{diag} [
	\Lambda\{\widehat{M}_0 , \widehat{ M}(f) \}
	-
	D(f)
	]
	\end{pmatrix}
	=
	\bar{M}_{\Omega^{-1}}
	\begin{pmatrix}
	\mathrm{vect} [
	\Gamma\{\widehat{D}_0 , \widehat{ D}(f) \}
	-
	I_p
	]
	\\
	\mathrm{diag} [
	\Lambda\{\widehat{D}_0 , \widehat{ D}(f) \}
	-
	D(f)
	]
	\end{pmatrix}.
	\]
	We also remark that $\Lambda(f) = D(f)$.
\end{proof}

\subsection{{Proof of Proposition  \ref{prop:identifiability:k:matrices}}}
\label{supplement:proof:identifiability:two}

{
	We let $D(f) = \Omega^{-1} M(f) \Omega^{-\T}$ for $f: \mathbb{R}^d \to \mathbb{R}$.
	We restate Proposition \ref{prop:identifiability:k:matrices} and prove it.}

\begin{proposition} \label{prop:identifiability:k:matrices:in:supplement}
	{
		The unmixing problem given by $f_1,\ldots,f_k$ is identifiable if and only if for every pair $ i \neq j $, $i, j = 1, \ldots, p$, there exists $ l = 1, \ldots , k$ such that $   \{\Omega^{-1} M(f_l) \Omega^{- \T}\}_{i,i} \neq  \{\Omega^{-1} M(f_l) \Omega^{- \T}\}_{j,j}  $.}
\end{proposition}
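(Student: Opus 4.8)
The plan is to reduce the constrained maximization \eqref{gamma:k:matrices} to a statement about orthogonal matrices. First I would record that $M(f_0) = \Omega \Omega^\T$ and $M(f_l) = \Omega D(f_l) \Omega^\T$, where each $D(f_l) = \Omega^{-1} M(f_l) \Omega^{-\T}$ is diagonal (it inherits diagonality from Assumption \ref{assumption:cov:D:Z}); write $d_{l,i} = \{D(f_l)\}_{i,i}$. For any competitor $\Gamma$, the constraint $\Gamma M(f_0) \Gamma^\T = I_p$ is equivalent to $U := \Gamma \Omega$ being orthogonal, and then each row satisfies $\gamma_j^\T M(f_l) \gamma_j = u_j^\T D(f_l) u_j = \{U D(f_l) U^\T\}_{j,j}$, where $u_j^\T$ is the $j$th row of $U$. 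Hence $\Gamma$ solves \eqref{gamma:k:matrices} if and only if $U = \Gamma \Omega$ maximizes $\sum_{l=1}^k \sum_{j=1}^p \{U D(f_l) U^\T\}_{j,j}^2$ over orthogonal $U$, with $\Gamma = U \Omega^{-1}$. Since $\Gamma$ has the form $P S \Omega^{-1}$ exactly when $U = \Gamma\Omega$ is a signed permutation matrix, identifiability in the sense of Definition \ref{def:identifiability:k:matrices} is equivalent to the statement that every orthogonal maximizer $U$ is a signed permutation.

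Next I would reduce this maximization to an exact joint-diagonalisation problem. For each $l$ the map $U \mapsto U D(f_l) U^\T$ preserves the Frobenius norm, so $\sum_{j} \{U D(f_l) U^\T\}_{j,j}^2 = \|D(f_l)\|_F^2 - \sum_{j \neq j'} \{U D(f_l) U^\T\}_{j,j'}^2$. Summing over $l$, the objective equals $\sum_l \|D(f_l)\|_F^2$ minus a nonnegative off-diagonal term, so its maximum is $\sum_l \|D(f_l)\|_F^2$ and is attained precisely by those orthogonal $U$ for which $U D(f_l) U^\T$ is diagonal for every $l$. Thus the maximizers are exactly the orthogonal matrices that simultaneously diagonalize the diagonal matrices $D(f_1), \ldots, D(f_k)$, and identifiability holds if and only if every such simultaneous diagonalizer is a signed permutation.

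The heart of the argument is then a linear-algebra characterization of these simultaneous diagonalizers. Associate to coordinate $i$ the value vector $\delta_i = (d_{1,i}, \ldots, d_{k,i})^\T$; the condition in the proposition is precisely that the $\delta_i$ are pairwise distinct. I would show that a unit vector $u$ is a common eigenvector of all the $D(f_l)$ if and only if its support is contained in a single class of the equivalence relation $i \sim i'$ defined by $\delta_i = \delta_{i'}$, because $D(f_l) u = \mu_l u$ forces $d_{l,i} = \mu_l$ for every $i$ in the support and every $l$. Consequently an orthogonal $U$ diagonalizes all the $D(f_l)$ if and only if its rows form an orthonormal basis of such block-supported eigenvectors, i.e. $U$ is, up to a row permutation, block-diagonal with an arbitrary orthogonal block on each coordinate subspace $\mathrm{span}\{e_i : \delta_i = c\}$. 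If all $\delta_i$ are distinct, every block is one-dimensional, hence equal to $\pm 1$, so $U$ is a signed permutation and identifiability follows. If instead $\delta_i = \delta_j$ for some $i \neq j$, a block of dimension at least two exists; inserting a nontrivial rotation there and the identity elsewhere yields an orthogonal $U$ that still diagonalizes every $D(f_l)$, since each $D(f_l)$ is a scalar multiple of the identity on that block, but is not a signed permutation, so $\Gamma = U \Omega^{-1}$ is a maximizer not of the form $P S \Omega^{-1}$ and identifiability fails.

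I expect the main obstacle to be the clean justification of the block-orthogonal description of all simultaneous diagonalizers, and in particular the bookkeeping that the rows supported on a given class are exactly as many as the dimension of that class and together form an orthonormal basis of it. The reduction via the off-diagonal Frobenius identity and the two directions of the final equivalence are then routine.
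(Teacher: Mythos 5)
Your proposal is correct and follows essentially the same route as the paper: the paper likewise reduces \eqref{gamma:k:matrices} to a maximization over orthogonal $U=\Gamma\Omega$, invokes the Frobenius-norm identity (in the proof of Lemma~\ref{lem:consistency:k:matrices}) to show the maximizers are exactly the simultaneous diagonalizers of the $D(f_l)$, characterizes common eigenvectors by the same support argument forcing $D(f_l)_{i,i}=D(f_l)_{q,q}$ for all $l$, and uses the same $2\times 2$ rotation block for the converse. Your explicit equivalence-class/block-orthogonal description of all simultaneous diagonalizers is a slightly more systematic packaging of the paper's column-by-column argument, but the substance is identical.
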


\begin{proof}[{of Proposition \ref{prop:identifiability:k:matrices:in:supplement} (Proposition \ref{prop:identifiability:k:matrices})}]
	{We have that $\Gamma$ satisfies \eqref{gamma:k:matrices} if and only if
		\begin{equation} \label{eq:Gamma:Omega:k}
		\Gamma \Omega
		\in 
		\argmax_{
			\substack{
				{L}: {L} {L} ^\T = I_p  \\
				{L} \mbox{\small \; has rows } l_1^\T,\ldots,l_p^\T
			} 
		}
		\sum_{l=1}^k \sum_{j=1}^p \{ l_j^\T D(f_l) l_j \}^2.
		\end{equation}
		If the condition of the proposition holds, then only orthogonal matrices of the form $PS$ satisfy \eqref{eq:Gamma:Omega:k}, with the double sum being equal to 
		\[
		\sum_{l=1}^k \sum_{j=1}^p  D(f_l)_{j,j}^2,
		\]
		where $P$ is a permutation matrix and $S$ is a diagonal matrix with diagonal elements equal to $-1$ or $1$, see the end of the proof of {Lemma \ref{lem:consistency:k:matrices}} below.
		Assume now that there exist $ i \neq j $, $i, j = 1, \ldots, p$, such that for $ l = 1, \ldots , k$, $   \{\Omega^{-1} M(f_l) \Omega^{- \T}\}_{i,i} =  \{\Omega^{-1} M(f_l) \Omega^{- \T}\}_{j,j}  $. Without loss of generality, assume that $i=1$ and $j=2$.
		Then consider the $p \times p$ block diagonal matrix $Q$ with first $2 \times 2$ block equal to $\{(2^{-1/2},2^{-1/2})^\T , (2^{-1/2},-2^{-1/2}){^\T}\}$ and second  $(p-2) \times (p-2)$ block equal to $I_{p-2}$. Then one can show that $Q$ satisfies \eqref{eq:Gamma:Omega:k}.
		In this case, $\Gamma = Q \Omega^{-1} $ satisfies \eqref{gamma:k:matrices}, and is not of the form $P S \Omega^{-1}$.}
\end{proof}

\subsection{Asymptotics when diagonalising more than two matrices}
\label{appendix:more:two:matrices}

\begin{lemma} \label{lem:consistency:k:matrices}
	Let Conditions \ref{cond:minimal:distance} and \ref{cond:bound:covariance} hold.
	Let $k \in \mathbb{N}$ be fixed. Let $f_1,\ldots,f_k: \mathbb{R}^d \to \mathbb{R}$ satisfy Condition \ref{cond:bound:weight:function}. Assume that Assumption \ref{assumption:identifiability:k:matrices} holds. Let $\widehat{\Gamma} = \widehat{\Gamma}\{\widehat{D}_0,\widehat{D}(f_1),\ldots,\widehat{D}(f_k)\}$ be such that
	\begin{equation} \label{eq:hat:gamma:k:matrices:in:appendix}
	\widehat{\Gamma}
	\in 
	\argmax_{
		\substack{
			\Gamma: \Gamma \widehat{D}_0 \Gamma^\T = I_p  \\
			\Gamma \mbox{ has rows } \gamma_1^\T,\ldots,\gamma_p^\T
		} 
	}
	\sum_{l=1}^k \sum_{j=1}^p \{\gamma_j^\T \widehat{D}(f_l) \gamma_j \}^2.
	\end{equation}
	Then we can choose $\widehat{\Gamma}$ so that $\widehat{\Gamma} \to  I_p$ in probability when $n \to \infty$.
\end{lemma}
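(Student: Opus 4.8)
The plan is to reduce the constrained joint-diagonalisation problem to an unconstrained maximisation over the compact orthogonal group, and then to run an argmax-consistency argument, handling the $n$-dependence of the population matrices through subsequences exactly as in the proof of Theorem \ref{prop:general:TCL}. First I would record the population inputs: applying Proposition \ref{prop:consistency} to the latent field $Z$, which obeys the model with $\Omega = I_p$ so that $\widehat{D}(f_l)$ and $D(f_l)$ play the roles of $\widehat{M}(f_l)$ and $M(f_l)$, yields $\widehat{D}(f_l) - D(f_l) \to 0$ and $\widehat{D}_0 - I_p \to 0$ in probability for each $l = 1,\ldots,k$; moreover each $D(f_l)$ is diagonal with uniformly bounded diagonal entries, by the bounds used in the proof of Proposition \ref{prop:consistency}. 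On the event $\{\widehat{D}_0 \succ 0\}$, whose probability tends to one, I would write any feasible $\Gamma$ as $\Gamma = L\,\widehat{D}_0^{-1/2}$ with $L$ orthogonal and set $\widehat{E}(f_l) = \widehat{D}_0^{-1/2}\widehat{D}(f_l)\widehat{D}_0^{-1/2}$, so that $\widehat{E}(f_l) - D(f_l) \to 0$ in probability by continuous mapping. Problem \eqref{eq:hat:gamma:k:matrices:in:appendix} then becomes the maximisation over $L \in O(p)$, with rows $l_1^\T,\ldots,l_p^\T$, of $H_n(L) = \sum_{l=1}^k\sum_{j=1}^p (l_j^\T \widehat{E}(f_l) l_j)^2$, whose population counterpart is $H(L) = \sum_{l=1}^k\sum_{j=1}^p (l_j^\T D(f_l) l_j)^2$.

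Second, I would establish the variational characterisation underlying identifiability. For orthogonal $L$, writing $W_{jm} = L_{jm}^2$ produces a doubly stochastic matrix and $l_j^\T D(f_l) l_j = \sum_m W_{jm} D(f_l)_{m,m}$, so $H(L) = \sum_l\sum_j (\sum_m W_{jm} D(f_l)_{m,m})^2$ is a convex function of $W$ on the Birkhoff polytope. Its maximum is therefore attained at a vertex, that is a permutation matrix, where the common value is $\sum_l\sum_m D(f_l)_{m,m}^2$; hence $H(L) \le \sum_l\sum_m D(f_l)_{m,m}^2$ for every orthogonal $L$. Using Assumption \ref{assumption:identifiability:k:matrices}, which separates the profile vectors $\{D(f_l)_{m,m}\}_{l=1}^k$ across $m$ by at least $\delta$, I would show that equality forces each $l_j$ to be a signed unit vector, i.e.\ $L$ is a signed permutation; the $\delta$-separation is what makes this maximum \emph{strict} and, crucially, \emph{uniform}, so that any $L$ with $H(L)$ within $\eta$ of the maximum lies within distance $\omega(\eta)$ of the set of signed permutations for a modulus $\omega$ with $\omega(\eta) \to 0$ not depending on $n$.

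Third, I would run the argmax argument by contradiction along subsequences, mirroring the structure used in the proof of Theorem \ref{prop:general:TCL}. Since $D(f_l)$ need not converge, along any subsequence I would extract a further subsequence on which each $D(f_l)$ converges to a limit $D_\infty(f_l)$; by Assumption \ref{assumption:identifiability:k:matrices} the limiting profiles remain $\delta$-separated, so the limiting objective $H_\infty$ is strictly maximised precisely at signed permutations. Uniform convergence of $H_n$ to $H_\infty$ over the compact group $O(p)$, which follows from $\widehat{E}(f_l)\to D_\infty(f_l)$ and boundedness, then forces any maximiser $\widehat{L}$ to concentrate near the signed permutations. Exploiting the non-uniqueness allowed in the statement, I would choose the representative $\widehat\Gamma = \widehat{L}\,\widehat{D}_0^{-1/2}$ with $\widehat{L}$ taken closest to $I_p$, equivalently imposing a sign and permutation normalisation, and conclude $\widehat{L} \to I_p$ and hence $\widehat\Gamma \to I_p$ in probability.

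The main obstacle is the second step: upgrading the inequality $H(L) \le \sum_l\sum_m D(f_l)_{m,m}^2$ into a strict, quantitatively uniform identification of the signed permutations as the only near-maximisers. Convexity and the Birkhoff theorem pin down the maximal value and show permutations are optimal, but ruling out all other orthogonal matrices, and doing so with an $n$-independent modulus $\omega$, requires carefully exploiting the $\delta$-gap, most naturally via the compactness of $O(p)$ combined with the subsequence extraction of the $D_\infty(f_l)$. Once this uniform well-separatedness is secured, the remaining argmax-consistency bookkeeping and the selection of the signed-permutation representative are routine.
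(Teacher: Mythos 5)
Your proposal is correct and its overall architecture coincides with the paper's: reduce to a maximisation over the orthogonal group via $\Gamma = L\,\widehat{D}_0^{-1/2}$, extract subsequences along which the bounded matrices $D(f_l)$ converge to limits $D_\infty(f_l)$ that inherit the $\delta$-separation, invoke uniform convergence of the empirical objective on a compact set, and finish with a standard M-estimator argument after fixing a canonical representative modulo signed permutations (the paper uses an explicit normalised set $E_0$ where you propose the representative closest to $I_p$; these are equivalent). The one place where you genuinely diverge is the identification of the population maximisers. The paper writes $g_\infty(U) = \sum_l \|U^\T D_\infty(f_l) U\|_F^2 - \sum_l\sum_{i\neq j}\{U^\T D_\infty(f_l)U\}_{i,j}^2$, so that a maximiser must diagonalise every $D_\infty(f_l)$ simultaneously, and then runs a common-eigenvector argument: a column with two non-zero entries would force $D_\infty(f_l)_{1,1}=D_\infty(f_l)_{q,q}$ for all $l$, contradicting the $\delta$-gap. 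You instead pass to the doubly stochastic matrix $W_{jm}=L_{jm}^2$, use convexity on the Birkhoff polytope to obtain the same upper bound $\sum_l\sum_m D_\infty(f_l)_{m,m}^2$, and then need the equality case; the clean way to close the step you flag as the main obstacle is Jensen's inequality $\bigl(\sum_m W_{jm} d_{lm}\bigr)^2 \le \sum_m W_{jm} d_{lm}^2$ row by row, whose equality case forces $d_{lm}$ to be constant on the support of each row of $W$ for every $l$, whence the $\delta$-separation collapses each support to a singleton and $L$ is a signed permutation. Your route is arguably more elementary (no spectral argument) and isolates the role of the identifiability assumption more transparently; the paper's route has the minor advantage of directly exhibiting the maximisers as simultaneous diagonalisers, which it then reuses in the expansion of Lemma \ref{lem:DL:k:matrices}. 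Note also that the quantitative, $n$-uniform modulus $\omega(\eta)$ you mention is not actually needed: as you yourself observe, the subsequence extraction reduces everything to the qualitative unique-maximiser statement for a fixed limit, which is exactly how the paper proceeds.
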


\begin{proof}
	Let, for $U$ a $p \times p$ orthogonal matrix with rows $u_1^\T,\ldots,u_p^\T$,
	\[
	\widehat{g}(U) = \sum_{l=1}^k \sum_{j=1}^p
	\{ u_j^\T \widehat{D}_0^{-1/2} \widehat{D}(f_l) \widehat{D}_0^{-1/2} u_j \}^2.
	\]
	Let
	\begin{eqnarray*}
		E_0  & =&
		\{
		U \mbox{ orthogonal with rows $u_1^\T$,\ldots,$u_p^\T$};\\\
		& & 
		\sum_{j=1}^p j (u_1)_j^2 \leq \cdots \leq \sum_{j=1}^p j (u_p)_j^2
		\mbox{ and for } \; i=1,\ldots,p, \sum_{j=1}^p U_{i,j} \geq 0
		\}.
	\end{eqnarray*}
	
	We observe that any orthogonal matrix can be obtained from a matrix in $E_0$, by row permutation and row multiplication by $1$ or $-1$. Hence, for any $n$, there exists $\widehat{U}$ so that $\widehat{U} \in \argmax_{U \in E_0} \widehat{g}(U)$ and $\widehat{U} \widehat{D}_0^{-1/2}$ satisfies \eqref{eq:hat:gamma:k:matrices:in:appendix}.
	
	We now aim at showing that
	$\widehat{U} \to I_p$ in probability as $n \to \infty$, which will conclude the proof since $\widehat{D}_0 \to I_p$ in probability.
	Assume that this is not the case. Then, there exists $\epsilon >0$ and a subsequence $(n_m)_{m \in \mathbb{N}}$ so that for all $m \in \mathbb{N}$ and along $n_m$
	\begin{equation} \label{eq:consistency:morethantwo:hatU}
	\mathrm{pr}( \| \widehat{U} - I_p  \|_F \geq \epsilon ) \geq \epsilon.
	\end{equation}
	The matrices $D(f_1),\ldots,D(f_l)$ are bounded (this can be shown as in Proposition \ref{prop:consistency}). Hence, by compacity, up to extracting a further subsequence, we have that \eqref{eq:consistency:morethantwo:hatU} holds along $n_m$ and, as $m \to \infty$ and along $n_m$, $D(f_1) \to D_{\infty}(f_1),\ldots,D(f_k) \to D_{\infty}(f_k)$.
	
	We let 
	\[
	g_{\infty}(U) = \sum_{l=1}^k \sum_{j=1}^p \{u_j^\T D_{\infty}(f_l) u_j \}^2.
	\]
	We have, from Proposition \ref{prop:consistency} and as observed in \cite{MiettinenIllnerNordhausenOjaTaskinenTheis2016}, that, as $m \to \infty$ and along $n_m$, 
	\[
	\sup_{ U \in U_0 } \left| \widehat{g}(U) - g_{\infty}(U) \right|
	\to 0
	\]
	in probability as $m \to \infty$.
	Hence, using a standard M-estimator argument and because $E_0$ is compact, if the unique maximum of $g_{\infty}$ on $E_0$ is $I_p$, we obtain that, as $m \to \infty$ and along $n_m$, $\widehat{U} \to I_p$ in probability. This is contradictory to \eqref{eq:consistency:morethantwo:hatU}.
	
	Hence, to conclude the proof, it suffices to show that the unique maximum of $g_{\infty}$ on $E_0$ is $I_p$.
	We have
	\begin{align} \label{eq:gU:frobenius}
	g_{\infty}(U) & =
	\sum_{l=1}^k
	\| U^\T D_{\infty}(f_l) U \|_F^2
	-
	\sum_{i \neq j} \{U^\T D_{\infty}(f_l) U  \}_{i,j}^2
	\\
	& \leq
	\sum_{l=1}^k \| U^\T D_{\infty}(f_l) U \|_F^2 
	\nonumber \\
	& = \sum_{l=1}^k \| D_{\infty}(f_l) \|_F^2. \nonumber
	\end{align}
	Also, 
	\[
	g_{\infty}(I_p) = \sum_{l=1}^k \sum_{j=1}^p D_{\infty}(f_l)_{j,j}^2
	=
	\sum_{l=1}^k \| D_{\infty}(f_l) \|_F^2.
	\]
	We next show that the identity matrix $ I_p $ is the unique maximizer of $ g_{\infty} $ in $ E_0 $. To see this, consider an arbitrary orthogonal matrix $ U $ which maximizes $ g_{\infty} $. From \eqref{eq:gU:frobenius} we see that $ U^\T D_{\infty}(f_l) U $ is a diagonal matrix for all $l = 1, \ldots, k$. Then, by its non-singularity, the matrix $ U $ must have a column with a non-zero first element. Call the first (from the left) such column of $ U $ by $ u $. We show that all other elements of $ u $ must be zero. By the previous, $ u $ is an eigenvector of all $ D_{\infty}(f_l) $ and we have,
	\[ 
	D_{\infty}(f_l) u = \psi_l u, \quad \mbox{for all } l = 1, \ldots, k,
	\]
	for some eigenvalues $ \psi_l \in \mathbb{R}$, $ l = 1, \ldots, k $. Assume then that $ u $ has a second non-zero element at some arbitrary position $ q \neq 1 $, meaning that both $ u_1, u_q \neq 0 $. Then we write
	\[ 
	D_{\infty}(f_l)_{1,1} u_1 = \psi_l u_1 \quad \mbox{and} \quad D_{\infty}(f_l)_{q,q} u_q = \psi_l u_q, \quad \mbox{for all } l = 1, \ldots, k,
	\]
	which in turn implies that $ D_{\infty}(f_l)_{1,1} = D_{\infty}(f_l)_{q,q} $ for all $ l = 1, \ldots , k $. By a continuity argument, this is a contradiction with Assumption \ref{assumption:identifiability:k:matrices}. As the choice of $ q $ was arbitrary, the only non-zero element in $ u $ is the first. Repeating now the same reasoning for other elements besides the first, we observe that each column of the maximizer $ U $ must have a single non-zero element, and by its orthogonality we have $ U = P D$ for some permutation matrix $ P $ and some diagonal matrix $ D $ with diagonal components in $ \{-1, 1\} $. The only matrix of that form belonging to $ E_0 $ is $ I_p $ and thus, for all $U \in E_0$ with $U \neq I_p$, we have $g(U) < g(I_p)$. 
\end{proof}

\begin{lemma} \label{lem:DL:k:matrices}
	Assume the same setting and conditions as in {Lemma~\ref{lem:consistency:k:matrices}}. For a diagonal matrix $\Lambda$, let $\Lambda_r = \Lambda_{r,r}$. Let $A_0,A_1,\ldots,A_k$ and $B$ be $p^2 \times p^2$ matrices defined by, for $n \geq n_0$ with the notation of Assumption \ref{assumption:identifiability:k:matrices},
	\[
	A_{0,i,j} =
	\begin{cases}
	- 1/2 & ~ ~\mbox{for} ~ ~ i=j  \in \mathcal{D}(p), \\
	- 
	\sum_{l=1}^k 
	\{
	D(f_l)_{I_p(i)} - D(f_l)_{J_p(i)}
	\}
	D(f_l)_{{I_p(i)}}
	& ~ ~\mbox{for} ~ ~ i=j \not \in \mathcal{D}(p), \\
	0  & ~ ~ \mbox{otherwise}, ~ ~
	\end{cases}
	\]
	for $l=1,\ldots,k$,
	\[
	A_{l,i,j} =
	\begin{cases} 
	D(f_l)_{I_p(i)} - D(f_l)_{J_p(i)}
	& ~ ~\mbox{for} ~ ~ i=j \not \in \mathcal{D}(p), \\
	0  & ~ ~ \mbox{otherwise}, ~ ~
	\end{cases}
	\]
	and
	\[
	B_{i,j} =
	\begin{cases}
	1 & ~ ~\mbox{for} ~ ~ i=j  \in \mathcal{D}(p), \\
	[\sum_{l=1}^k 
	\{
	D(f_l)_{I_p(i)} - D(f_l)_{J_p(i)}
	\}^2 ]^{-1}
	& ~ ~\mbox{for} ~ ~ i=j \not \in \mathcal{D}(p), \\
	0  & ~ ~ \mbox{otherwise}. ~ ~
	\end{cases}
	\]
	Then, as $n \to \infty$, there exists $\widehat{\Gamma}$ satisfying \eqref{eq:hat:gamma:k:matrices:in:appendix} so that
	\[
	{n}^{1/2}
	\mathrm{vect}
	(
	\widehat{\Gamma} - I_p
	)
	=
	B
	\begin{pmatrix}
	A_0 & A_1 & \ldots & A_k 
	\end{pmatrix}
	\begin{pmatrix}
	{n}^{1/2} \mathrm{vect} (
	\widehat{D}_0
	-
	D_0
	) \\
	{n}^{1/2} \mathrm{vect} \{
	\widehat{D}(f_1)
	-
	D(f_1)
	\} \\
	\vdots \\
	{n}^{1/2} \mathrm{vect} \{
	\widehat{D}(f_k)
	-
	D(f_k)
	\}
	\end{pmatrix}.
	\]
\end{lemma}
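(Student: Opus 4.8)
The plan is to follow the two-matrix argument of Lemma~\ref{lem:DL:proba:for:TCL}, replacing the exact generalized-eigenvector equations by the first-order stationarity conditions of the criterion in \eqref{eq:hat:gamma:k:matrices:in:appendix}. As in the proof of Lemma~\ref{lem:consistency:k:matrices}, I would write $\widehat{\Gamma} = \widehat{U}\widehat{D}_0^{-1/2}$ with $\widehat{U}$ orthogonal, so that the problem reduces to maximizing $\widehat{g}(U)=\sum_{l=1}^k\sum_{j=1}^p(u_j^\T \widehat{E}_l\, u_j)^2$ over orthogonal $U$, where $\widehat{E}_l=\widehat{D}_0^{-1/2}\widehat{D}(f_l)\widehat{D}_0^{-1/2}$ is symmetric. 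By Lemma~\ref{lem:consistency:k:matrices} the maximizer can be chosen so that $\widehat{U}\to I_p$ in probability; since the defining inequalities of $E_0$ hold strictly at $I_p$, for large $n$ these constraints are inactive near $\widehat{U}$, so $\widehat{U}$ is a local maximizer of $\widehat{g}$ over the whole orthogonal group and all its Givens-rotation derivatives vanish. Differentiating $\widehat{g}$ along the rotation mixing rows $i$ and $j$ then gives the stationarity equations
\[
\sum_{l=1}^k(\widehat{u}_i^\T \widehat{E}_l\,\widehat{u}_j)(\widehat{u}_i^\T \widehat{E}_l\,\widehat{u}_i-\widehat{u}_j^\T \widehat{E}_l\,\widehat{u}_j)=0,\qquad i\neq j.
\]

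Next I would linearize. Setting $\widehat{U}=I_p+\widehat{V}$, orthogonality forces $\widehat{V}+\widehat{V}^\T=-\widehat{V}\widehat{V}^\T$, so $\widehat{V}_{ii}=O_p(\|\widehat{V}\|^2)$ and $\widehat{V}_{ij}=-\widehat{V}_{ji}+O_p(\|\widehat{V}\|^2)$ for $i\neq j$. With $\Delta_0=\widehat{D}_0-I_p$ and $\Delta_l=\widehat{D}(f_l)-D(f_l)$, a Taylor expansion of $\widehat{D}_0^{-1/2}$ yields $\widehat{E}_l=D(f_l)+\Delta_l-\tfrac12\{\Delta_0 D(f_l)+D(f_l)\Delta_0\}+O_p(n^{-1})$; since $D(f_l)$ is diagonal, its $(i,j)$ entry for $i\neq j$ is $(\Delta_l)_{ij}-\tfrac12\{D(f_l)_{ii}+D(f_l)_{jj}\}(\Delta_0)_{ij}+O_p(n^{-1})$. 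Abbreviating $\theta^{(l)}_{ij}=D(f_l)_{ii}-D(f_l)_{jj}$ and inserting these expansions into the stationarity equations, the leading $O_p(n^{-1/2})$ balance (the $O_p(n^{-1})$ cross terms being negligible) reads
\[
\Big(\sum_{l=1}^k(\theta^{(l)}_{ij})^2\Big)\widehat{V}_{ij}=\sum_{l=1}^k\theta^{(l)}_{ij}\Big[(\Delta_l)_{ij}-\tfrac12\{D(f_l)_{ii}+D(f_l)_{jj}\}(\Delta_0)_{ij}\Big]+o_p(n^{-1/2}).
\]
Assumption~\ref{assumption:identifiability:k:matrices} is exactly what guarantees $\sum_l(\theta^{(l)}_{ij})^2\geq\delta^2>0$ uniformly in $n$, so this system is solvable, simultaneously delivering the rate $\widehat{V}_{ij}=O_p(n^{-1/2})$ that retroactively justifies discarding the quadratic remainders.

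Finally I would reconstruct $\widehat{\Gamma}-I_p=\widehat{U}\widehat{D}_0^{-1/2}-I_p=\widehat{V}-\tfrac12\Delta_0+O_p(n^{-1})$. On the diagonal, $\widehat{V}_{ii}=O_p(n^{-1})$ gives $(\widehat{\Gamma}-I_p)_{ii}=-\tfrac12(\Delta_0)_{ii}+o_p(n^{-1/2})$, matching the definitions $A_{0,ii}=-1/2$, $A_{l,ii}=0$ and $B_{ii}=1$ at the diagonal positions. Off the diagonal, substituting the solution for $\widehat{V}_{ij}$ and writing $-\tfrac12(\Delta_0)_{ij}$ over the common denominator $\sum_l(\theta^{(l)}_{ij})^2$, the coefficient of $(\Delta_0)_{ij}$ becomes $-\tfrac12\{\sum_l(\theta^{(l)}_{ij})^2\}^{-1}\sum_l\theta^{(l)}_{ij}[\{D(f_l)_{ii}+D(f_l)_{jj}\}+\theta^{(l)}_{ij}]$; since $\theta^{(l)}_{ij}=D(f_l)_{ii}-D(f_l)_{jj}$ the bracket equals $2D(f_l)_{ii}$, which reproduces precisely the off-diagonal entries of $A_0,A_1,\ldots,A_k$ and $B$. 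Collecting the diagonal and off-diagonal components into the row vectorization and multiplying by $n^{1/2}$ then gives the stated identity, up to an additive $o_p(1)$ as in Lemma~\ref{lem:DL:proba:for:TCL}. To make this rigorous I would run the whole argument by contradiction along a subsequence on which the bounded diagonal matrices $D(f_l)$ converge, invoking Lemma~\ref{lem:consistency:k:matrices} for consistency, exactly as was done for two matrices. I expect the main difficulty to lie in this linearization step: establishing the $O_p(n^{-1/2})$ rate for $\widehat{V}$ and showing that the quadratic remainders in the stationarity equations are genuinely $o_p(n^{-1/2})$, both of which rest on the denominators $\sum_l(\theta^{(l)}_{ij})^2$ being bounded away from zero, the role played by Assumption~\ref{assumption:identifiability:k:matrices}.
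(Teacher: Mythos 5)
Your proposal is correct and follows essentially the same route as the paper: both derive the first-order stationarity (estimating) equations of the orthogonal maximization for $\widehat{U}$ with $\widehat{\Gamma}=\widehat{U}\widehat{D}_0^{-1/2}$, linearize around $I_p$ using the consistency from Lemma~\ref{lem:consistency:k:matrices} together with the linearized orthogonality constraint, invoke Assumption~\ref{assumption:identifiability:k:matrices} to keep the denominators $\sum_{l}\{D(f_l)_{I_p(i)}-D(f_l)_{J_p(i)}\}^2$ bounded away from zero (the paper phrases this as invertibility of the $2\times 2$ blocks of the linearized system), and arrive at the identical entry-wise expansions for $\widehat{\Gamma}_{i,i}$ and $\widehat{\Gamma}_{i,j}$. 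The only difference is presentational: you work entry-wise via Givens-rotation derivatives, while the paper writes the same linearization in vectorized form with the commutation matrix, citing Miettinen et al.\ and Virta for the details you spell out.
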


\begin{proof}
	Assume that $n \geq n_0$ throughout the proof.
	Let $\widehat{\Gamma}$ satisfy \eqref{eq:hat:gamma:k:matrices:in:appendix} and $\widehat{\Gamma} \to I_p$ in probability when $n\to \infty$ (the existence follows from {Lemma~\ref{lem:consistency:k:matrices}}). The proof of the {lemma} follows the proofs of ii) in Theorem 2 of \cite{MiettinenIllnerNordhausenOjaTaskinenTheis2016} and Theorem 3 in \cite{virta2018asymptotically} and as such, we present below only some key steps.  
	
	From {Lemma~\ref{lem:TCL:hat:D}}, we have 
	$n^{1/2} ( \widehat{D}_0 - I_p) = O_p(1)$ and $n^{1/2} \{\widehat{D}(f_l) - D(f_l)\} = O_p(1)$, for all $ l = 1, \ldots , k $. By a continuity argument and our assumptions, we further have $D(f_1) \to D_{\infty}(f_1),\ldots,D(f_k) \to D_{\infty}(f_k)$ such that the limit matrices satisfy: there exists a fixed $\delta >0$ so that  for every pair $ i \neq j $, $i, j = 1, \ldots, p$, there exists $ l = 1, \ldots , k$ such that $ | D_\infty(f_l)_{i,i} - D_\infty(f_l)_{j,j} | \geq \delta $. The previous convergence holds up to extracting a subsequence. We omit this step in this proof for concision, but see the proof of  {Lemma~\ref{lem:DL:proba:for:TCL}}.
	Finally, the rotation $ \widehat{U} $ so that $\widehat{\Gamma} = \widehat{U} \widehat{D}_0^{-1/2}$ also satisfies $ \widehat{U} \to I_p$ in probability.

	Then, as in \cite{virta2018asymptotically}, the maximisation problem,
	\[
	\argmax_{
		\substack{
			U: U U^\T = I_p  \\
			U \mbox{ has rows } u_1^\T,\ldots,u_p^\T }}
	\sum_{l=1}^k \sum_{j=1}^p
	\{ u_j^\T \widehat{D}_0^{-1/2} \widehat{D}(f_l) \widehat{D}_0^{-1/2} u_j \}^2,
	\]
	yields the estimation equations $ n^{1/2}\widehat{Y} = n^{1/2}\widehat{Y}^\T $, where
	\[ 
	n^{1/2}\widehat{Y} = n^{1/2}\sum_{l=1}^k \widehat{U} \widehat{R}(f_l) \widehat{U}^\T \mathrm{Diag}\{\widehat{U} \widehat{R}(f_l) \widehat{U}^\T\},
	\]
	where we have used the shorthand $ \widehat{R}(f_l) = \widehat{D}_0^{-1/2} \widehat{D}(f_l) \widehat{D}_0^{-1/2} $ and $ \mathrm{Diag}(M) $ is equal to the square matrix $ M $ but with its off-diagonal elements set to zero. 
	Linearizing the estimating equations asymptotically and vectorizing, we arrive at the following form,
	\begin{align}\label{eq:estimating_eq_1}
	\begin{split}
	& (I_p - K) \sum_{l=1}^k \left( [ \mathrm{Diag}\{\widehat{U} D(f_l) \widehat{U}^\T\} \widehat{U} D(f_l) \otimes I_p ] +  [\mathrm{Diag}\{\widehat{U} D(f_l) \widehat{U}^\T\} \otimes  D(f_l)] K \right)\\
	& \cdot n^{1/2}\mathrm{vec} ( \widehat{U} - I_p ) = - (I_p - K) n^{1/2}\mathrm{ vec}( \widehat{F} ) + o_p(1),
	\end{split}
	\end{align}
	where $ K $ is the $ p^2 \times p^2 $ commutation matrix satisfying $ K^2 = I_p $, $ n^{1/2}\widehat{F} = \sum_{l=1}^k n^{1/2}\{ \widehat{R}(f_l) - D(f_l) \} D_\infty(f_l) = O_p(1)$ and $\mathrm{vec}( M) = (c_1^\T,\ldots,c_{q}^\T)^\T$ is the column vectorization where $c_1,\ldots,c_{q}$ are the ${q}$ columns of a matrix $ M$. The orthogonality constraint can be similarly linearized to yield,
	\begin{align}\label{eq:estimating_eq_2}
	\{(\widehat{U} \otimes I_p ) + K \} n^{1/2}\mathrm{vec} ( \widehat{U} - I_p ) = 0.
	\end{align}
	Summing \eqref{eq:estimating_eq_1} and \eqref{eq:estimating_eq_2}, we obtain,
	\begin{align}\label{eq:short_form} 
	\widehat{A} n^{1/2}\mathrm{vec} ( \widehat{U} - I_p ) = - (I_p - K) n^{1/2}\mathrm{ vec}( \widehat{F} ) + o_p(1),
	\end{align}
	where $\widehat{A} \rightarrow A = (I_p - K) \{ ( \sum_{l=1}^k D_l^2 \otimes I_p ) + ( \sum_{l=1}^k D_l \otimes  D_l ) K \} + I_p + K $ in probability, where we use the notation $ D_l = D_\infty(f_l) $, $ l = 1, \ldots , k $. Using the fact that $ K (A \otimes B) K = B \otimes A $ for any conformable matrices $ A, B $, we get the alternative form,
	\[ 
	A = \left(\sum_{l=1}^k D_l^2 \otimes I_p - \sum_{l=1}^k D_l \otimes  D_l + I_p \right) + \left( \sum_{l=1}^k D_l \otimes  D_l - \sum_{l=1}^k I_p  \otimes D_l^2 + I_p  \right) K.
	\]
	Continuing as in \cite{virta2018asymptotically}, each diagonal element of $ \widehat{U} $ has a corresponding $ 1 \times 1 $ diagonal block equal to 2 in $ A $. Similarly, each pair of $ (a, b) $th and $ (b, a) $th off-diagonal elements in $ \widehat{U} $ has a corresponding $ 2 \times 2 $ sub-matrix in $ A $ of the form,
	\[ 
	A_{ab} = \begin{pmatrix}
	1 + \sum_{l=1}^k d_{la}^2 - \sum_{l=1}^k d_{la} d_{lb} & 1 - \sum_{l=1}^k d_{lb}^2 + \sum_{l=1}^k d_{la} d_{lb}\\
	1 - \sum_{l=1}^k d_{la}^2 + \sum_{l=1}^k d_{la} d_{lb} & 1 + \sum_{l=1}^k d_{lb}^2 - \sum_{l=1}^k d_{la} d_{lb},
	\end{pmatrix},
	\]
	where $ d_{la} $ is the $ a $th diagonal element of $ D_l $.
	The inverse of the sub-matrix is
	\[ 
	A_{ab}^{-1} = \left\{ 2 \sum_{l=1}^k ( d_{la} - d_{lb} )^2 \right\}^{-1}
	\begin{pmatrix}
	1 + \sum_{l=1}^k d_{lb}^2 - \sum_{l=1}^k d_{la} d_{lb} & \sum_{l=1}^k d_{lb}^2 - \sum_{l=1}^k d_{la} d_{lb} - 1\\
	\sum_{l=1}^k d_{la}^2 - \sum_{l=1}^k d_{la} d_{lb} - 1 & 1 + \sum_{l=1}^k d_{la}^2 - \sum_{l=1}^k d_{la} d_{lb},
	\end{pmatrix},
	\]
	showing that $ A $ is invertible as by our assumptions $ \sum_{l=1}^k ( d_{la} - d_{lb} )^2 \neq 0 $ for all distinct pairs $ a, b = 1, \ldots , p $. Thus, by Slutsky's theorem, we obtain from \eqref{eq:short_form} that,
	\[ 
	n^{1/2}\mathrm{vec} ( \widehat{U} - I_p ) = - A^{-1} n^{1/2}\mathrm{ vec}( \widehat{F} - \widehat{F}^\T ) + o_p(1),
	\]
	showing that, $ n^{1/2}( \widehat{U} - I_p ) = O_p(1) $. Consequently, also $n^{1/2}( \widehat{\Gamma} - I_p ) = O_p(1)$.
	
	%

	Finally, we next proceed as in the proof of ii) in Theorem 2 of \cite{MiettinenIllnerNordhausenOjaTaskinenTheis2016} to obtain that, as $n \to \infty$, 
	\[ 
	{n}^{1/2} ( \widehat{\Gamma}_{i,i} - 1 )
	= - (1/2) {n}^{1/2} ( \widehat{D}_{0,i,i} - 1)
	+ o_p(1)
	\]
	and for $i \neq j$,
	\[ 
	{n}^{1/2}  \widehat{\Gamma}_{i,j}
	=  
	\frac{
		\sum_{l=1}^k
		\{ D(f_l)_{i} - D(f_l)_{j} \}
		{n}^{1/2}\{ \widehat{D}(f_l)_{i,j} 
		-
		D(f_l)_{i} \widehat{D}_{0,i,j}  \}
	}{
		\sum_{r=1}^k
		\{D(f_r)_{i} - D(f_r)_{j} \}^2
	}
	+o_p(1).
	\]
	Hence, the {lemma} follows from the definition of $A_0,A_1,\ldots,A_k,B$.
\end{proof}

\begin{lemma} \label{lem:TCL:k:matrix}
	Assume the same settings and conditions as in {Lemma~\ref{lem:consistency:k:matrices}}.
	Let $\Sigma(f,g)$ be as in Proposition \ref{prop:TCL:for:gamma:lambda} with $R$ replaced by $\mathrm{cov}(z)$  where $z$ is the $np \times 1$ vector defined by, for $i=1,\ldots,n$, $j=1,\ldots,p$, $z_{(i-1)p + j} = Z_j( s_i)$. Let $f_0(x) = I(x=0)$.
	Let $\tilde{V}(f_1,\ldots,f_k)$ be the $(k+1)p^2 \times (k+1) p^2$ matrix, composed of $(k+1)^2$ blocks of sizes $p^2 \times p^2$ with block $(i+1),(j+1)$ equal to $\Sigma(f_i,f_j)$ for $i,j=0,\ldots,p$.
	
	Let $G$ be the $p^2 \times (k+1)p^2$ matrix defined by $G = B
	(
	A_0,  A_1,  \ldots  A_k 
	)$, for $n \geq n_0$, with the notation of {Lemma~\ref{lem:DL:k:matrices}}. Let $M_{\Omega^{-1}}$ be as in Proposition \ref{prop:TCL:deux:matrices} and let, for $n \geq n_0$,
	\[
	F = M_{\Omega^{-1}} G \tilde{V}(f_1,\ldots,f_k) G^\T M_{\Omega^{-1}}^\T.
	\]
	Then, $\widehat{\Gamma} = \widehat{\Gamma}\{\widehat{M}_0,\widehat{M}(f_1),\ldots,\widehat{M}(f_k)\}$ satisfying \eqref{eq:hat:gamma:k:matrices:in:appendix}, with $\widehat{D}_0,\widehat{D}(f_1),\ldots,\widehat{D}(f_k)$ replaced by $\widehat{M}_0,\widehat{M}(f_1),\ldots,\widehat{M}(f_k)$, can be chosen so that, with $Q_n$ the distribution of ${n}^{1/2} \mathrm{vect} ( \widehat{\Gamma} - \Omega^{-1} )$, we have as $n \to \infty$
	\[
	d_w\{Q_n , \mathcal{N}(0,F) \}
	\to 0.
	\]
\end{lemma}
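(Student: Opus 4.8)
The plan is to establish the result first for the source-based estimator $\widehat{\Gamma}\{\widehat{D}_0,\widehat{D}(f_1),\ldots,\widehat{D}(f_k)\}$ and then to transport it to $\widehat{\Gamma}\{\widehat{M}_0,\widehat{M}(f_1),\ldots,\widehat{M}(f_k)\}$ via the affine equivariance of the joint-diagonalisation criterion~\eqref{eq:hat:gamma:k:matrices:in:appendix}, mirroring the two-matrix argument that led from {Lemma~\ref{lem:TCL:deux:matrices:D}} to Proposition~\ref{prop:TCL:deux:matrices}.

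First I would prove the joint asymptotic normality of the $k+1$ source scatter matrices. Each entry of $\widehat{D}_0,\widehat{D}(f_1),\ldots,\widehat{D}(f_k)$ is a quadratic form $n^{-1} z^\T T z$ in the Gaussian vector $z$, exactly as in the proof of Proposition~\ref{prop:TCL:for:gamma:lambda}; applying Theorem~\ref{prop:general:TCL} with $k+1$ groups of symmetric matrices in place of two shows that
\[
d_w[ Q_{\mathrm{st},n}, \mathcal{N}\{0,\tilde{V}(f_1,\ldots,f_k)\} ] \to 0,
\]
where $Q_{\mathrm{st},n}$ is the distribution of $n^{1/2}(\mathrm{vect}(\widehat{D}_0-D_0)^\T, \ldots,\mathrm{vect}\{\widehat{D}(f_k)-D(f_k)\}^\T)^\T$, and the spectral-norm bounds used in {Lemma~\ref{lem:TCL:hat:D}} give that $\lambda_1\{\tilde{V}(f_1,\ldots,f_k)\}$ stays bounded. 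Next I would couple this with the linearisation of {Lemma~\ref{lem:DL:k:matrices}}, which furnishes a choice of $\widehat{\Gamma}$ obeying~\eqref{eq:hat:gamma:k:matrices:in:appendix} with
\[
n^{1/2}\mathrm{vect}(\widehat{\Gamma}\{\widehat{D}_0,\ldots\}-I_p) = G\, n^{1/2}(\mathrm{vect}(\widehat{D}_0-D_0)^\T,\ldots,\mathrm{vect}\{\widehat{D}(f_k)-D(f_k)\}^\T)^\T + o_p(1),
\]
for $G=B(A_0,A_1,\ldots,A_k)$. Since Assumption~\ref{assumption:identifiability:k:matrices} keeps the denominators in $B$ bounded away from zero and the diagonal entries of the $D(f_l)$ bounded, the sequence $G$ is bounded; together with the boundedness of $\tilde{V}(f_1,\ldots,f_k)$ this lets me argue along subsequences as in {Lemma~\ref{lem:TCL:deux:matrices:D}}, extracting a further subsequence along which $G$ and $\tilde{V}$ converge, applying the two displays with Slutsky's lemma, and concluding that $d_w\{\cdot,\mathcal{N}(0,G\tilde{V}(f_1,\ldots,f_k)G^\T)\}\to 0$.

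Finally I would pass to the mixed process. As $X=\Omega Z$ gives $\widehat{M}_0=\Omega\widehat{D}_0\Omega^\T$ and $\widehat{M}(f_l)=\Omega\widehat{D}(f_l)\Omega^\T$, the substitution $\Gamma=\Lambda\Omega^{-1}$ maps the constraint and objective of~\eqref{eq:hat:gamma:k:matrices:in:appendix} for the $X$-scatters onto the identical problem for the $D$-scatters, so one may take $\widehat{\Gamma}\{\widehat{M}_0,\ldots\}=\widehat{\Gamma}\{\widehat{D}_0,\ldots\}\Omega^{-1}$ up to the sign and permutation symmetries common to both argmax sets. Writing $\widehat{\Gamma}\{\widehat{M}_0,\ldots\}-\Omega^{-1}=(\widehat{\Gamma}\{\widehat{D}_0,\ldots\}-I_p)\Omega^{-1}$ and using that row vectorisation converts right multiplication by $\Omega^{-1}$ into left multiplication by $M_{\Omega^{-1}}$, that is $\mathrm{vect}(C\Omega^{-1})=M_{\Omega^{-1}}\mathrm{vect}(C)$, the claim follows with $F=M_{\Omega^{-1}}G\tilde{V}(f_1,\ldots,f_k)G^\T M_{\Omega^{-1}}^\T$.

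The main obstacle is running the central limit theorem inside the $d_w$-metric framework when $\tilde{V}(f_1,\ldots,f_k)$ and the transformation $G$ do not converge: there is no fixed limit law to target, so one must pass to subsequences along which both stabilise before invoking Slutsky's lemma, reproducing the device of {Lemma~\ref{lem:TCL:deux:matrices:D}}. A secondary but delicate point is checking the equivariance identity with a choice of signs and permutations compatible with the consistent branch isolated in {Lemma~\ref{lem:consistency:k:matrices}}, so that the linearisation applies to the same estimator.
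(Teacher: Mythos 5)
Your proposal is correct and follows essentially the same route as the paper: the paper's proof of this lemma simply invokes the argument of Proposition~\ref{prop:TCL:deux:matrices}, i.e., the $(k+1)$-block extension of the scatter-matrix central limit theorem of Lemma~\ref{lem:TCL:hat:D} combined with the linearisation of Lemma~\ref{lem:DL:k:matrices} via the subsequence-plus-Slutsky device, followed by the equivariance identity $\widehat{\Gamma}\{\widehat{M}_0,\ldots\}=\widehat{\Gamma}\{\widehat{D}_0,\ldots\}\Omega^{-1}$ which produces the factor $M_{\Omega^{-1}}$. Your write-up makes explicit the steps the paper leaves implicit, including the verification that $\mathrm{vect}(C\Omega^{-1})=M_{\Omega^{-1}}\mathrm{vect}(C)$ and the compatibility of the sign and permutation choices, and there is no gap.
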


\begin{proof}
	The proof is the same as that of Proposition \ref{prop:TCL:deux:matrices}. In particular, for $\widehat{\Gamma}\{ \widehat{D}_0,\widehat{D}(f_1),\ldots,\widehat{D}(f_k) \}$ satisfying \eqref{eq:hat:gamma:k:matrices:in:appendix}, the matrix $\widehat{\Gamma}\{ \widehat{D}_0,\widehat{D}(f_1),\ldots,\widehat{D}(f_k) \}\Omega^{-1}$ satisfies \eqref{eq:hat:gamma:k:matrices:in:appendix}, with $\widehat{D}_0,\widehat{D}(f_1),\ldots,\widehat{D}(f_k)$ replaced by $\widehat{M}_0,\widehat{M}(f_1),\ldots,\widehat{M}(f_k)$.
\end{proof}

\begin{proposition} \label{prop:permuting:lines:consistency}
	{Assume the same settings and conditions as in {Lemma~\ref{lem:consistency:k:matrices}}.
		Let
		$(\widehat{\Gamma}_n)_{n \in \mathbb{N}}$ be any sequence of  $p \times p$ matrices so that for any $n \in \mathbb{N}$, $\widehat{\Gamma}_n = \widehat{\Gamma}_n\{\widehat{M}_0,\widehat{M}(f_1),\ldots,\widehat{M}(f_k)\}$  satisfies \eqref{eq:hat:gamma:k:matrices:in:appendix}, with $\widehat{D}_0,\widehat{D}(f_1),\ldots,\widehat{D}(f_k)$ replaced by $\widehat{M}_0,\widehat{M}(f_1),\ldots,\widehat{M}(f_k)$. Then, there exists a sequence of permutation matrices $(P_n)$ and a sequence of diagonal matrices $(D_n)$, with diagonal components in $\{-1,1\}$ so that, with $\check{\Gamma}_n = D_n P_n \widehat{\Gamma}_n$, the sequence $(\check{\Gamma}_n)$ satisfies the conclusions of {Lemma \ref{lem:consistency:k:matrices}}, with the limit $I_p$ replaced by $\Omega^{-1}$.}
\end{proposition}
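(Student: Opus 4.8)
The plan is to deduce the statement from the $Z$-based consistency result of Lemma~\ref{lem:consistency:k:matrices} by exploiting the equivariance of the joint diagonalisation problem. Since $X(s_i) = \Omega Z(s_i)$, we have $\widehat{M}(f) = \Omega \widehat{D}(f) \Omega^\T$ for every kernel $f$, and in particular $\widehat{M}_0 = \Omega \widehat{D}_0 \Omega^\T$. Writing $\Gamma' = \Gamma \Omega$, the constraint $\Gamma \widehat{M}_0 \Gamma^\T = I_p$ becomes $\Gamma' \widehat{D}_0 (\Gamma')^\T = I_p$, and since row $j$ of $\Gamma'$ is $(\Omega^\T \gamma_j)^\T$, the objective $\sum_{l=1}^k \sum_{j=1}^p (\gamma_j^\T \widehat{M}(f_l) \gamma_j)^2$ equals $\sum_{l=1}^k \sum_{j=1}^p \{(\Omega^\T\gamma_j)^\T \widehat{D}(f_l) (\Omega^\T\gamma_j)\}^2$, which is exactly the $Z$-objective evaluated at $\Gamma'$. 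Hence $\Gamma$ solves the $\widehat{M}$-version of \eqref{eq:hat:gamma:k:matrices:in:appendix} if and only if $\Gamma \Omega$ solves the $\widehat{D}$-version, so that $\widetilde{\Gamma}_n := \widehat{\Gamma}_n \Omega$ is a solution of \eqref{eq:hat:gamma:k:matrices:in:appendix} and $\widehat{\Gamma}_n = \widetilde{\Gamma}_n \Omega^{-1}$.

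Next I would describe the full solution set of the $\widehat{D}$-version. The constraint $\Gamma \widehat{D}_0 \Gamma^\T = I_p$ forces $\Gamma = U \widehat{D}_0^{-1/2}$ for an orthogonal matrix $U$, and the objective then equals $\widehat{g}(U)$ in the notation of the proof of Lemma~\ref{lem:consistency:k:matrices}. Because $\widehat{g}$ is invariant under permutations and sign changes of the rows of $U$, and because every orthogonal matrix can be brought into the fundamental domain $E_0$ by such operations, every maximizer $U$ can be written as $U = \Pi V$ with $\Pi$ a signed permutation matrix and $V \in E_0$ itself a maximizer of $\widehat{g}$. The proof of Lemma~\ref{lem:consistency:k:matrices} establishes, via a uniform M-estimation argument over the compact set $E_0$ together with the uniqueness of $I_p$ as maximizer of the limit criterion, that \emph{any} such $E_0$-maximizer $V_n$ satisfies $V_n \widehat{D}_0^{-1/2} \to I_p$ in probability, since $\widehat{D}_0 \to I_p$ in probability.

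Combining the two steps, write $\widetilde{\Gamma}_n = \Pi_n V_n \widehat{D}_0^{-1/2}$, so that $\Pi_n^{-1} \widehat{\Gamma}_n = V_n \widehat{D}_0^{-1/2} \Omega^{-1} \to \Omega^{-1}$ in probability. Since the inverse of a signed permutation matrix is again a signed permutation matrix, I can decompose $\Pi_n^{-1} = D_n P_n$ with $P_n$ a permutation matrix and $D_n$ a diagonal matrix with entries in $\{-1,1\}$; setting $\check{\Gamma}_n = D_n P_n \widehat{\Gamma}_n = \Pi_n^{-1}\widehat{\Gamma}_n$ then yields $\check{\Gamma}_n \to \Omega^{-1}$ in probability, the desired conclusion. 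I expect the only delicate point to be the bookkeeping around $E_0$: one must argue that the solution set of \eqref{eq:hat:gamma:k:matrices:in:appendix} is exactly the signed-permutation orbit of the $E_0$-maximizers and that \emph{all} of these maximizers, not merely a single chosen representative, are consistent, so that the data-dependent signed permutation $\Pi_n$ relating $\widetilde{\Gamma}_n$ to its $E_0$-representative can be stripped off uniformly. The measurability of the sequences $(P_n)$ and $(D_n)$ is not a concern here, since the proposition only asserts their existence.
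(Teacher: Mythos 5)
Your proposal is correct and follows essentially the same route as the paper: establish the equivariance $\widehat{M}(f)=\Omega\widehat{D}(f)\Omega^\T$ so that $\widehat{\Gamma}_n$ solves the $\widehat{M}$-problem if and only if $\widehat{\Gamma}_n\Omega$ solves the $\widehat{D}$-problem, normalise the latter by a signed permutation into the fundamental domain $E_0$, and invoke the M-estimation consistency argument of Lemma~\ref{lem:consistency:k:matrices}, which indeed applies to every $E_0$-maximiser and not only a chosen representative. The ``delicate point'' you flag is precisely the step the paper's proof uses implicitly when it asserts that any solution can be brought into $E_0$ and then handled by ``the same argument as in the proof of the last part'' of that lemma.
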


\begin{proof}
	{With the notation of the proof of {Lemma \ref{lem:consistency:k:matrices}}, for $\widehat{\Gamma}_n$ satisfying \eqref{eq:hat:gamma:k:matrices:in:appendix}, there exist $P_n, D_n$, as described in the proposition, so that $ D_n P_n \widehat{\Gamma}_n \widehat{D}_0^{1/2} \in E_0$ and $ D_n P_n \widehat{\Gamma}_n $ satisfies \eqref{eq:hat:gamma:k:matrices:in:appendix}. Hence, with the same argument as in the proof of the last part of {Lemma \ref{lem:consistency:k:matrices}}, we have $D_n P_n \widehat{\Gamma}_n \to I_p$ in probability as $n \to \infty$. Furthermore, as in the proof of {Lemma \ref{lem:TCL:k:matrix}}, we can show that $D_n P_n \widehat{\Gamma}_n \Omega^{-1}$ satisfies the conclusion of this {lemma}. The proof is concluded by observing that any matrix $\bar{\Gamma}$ satisfies \eqref{eq:hat:gamma:k:matrices:in:appendix}, with $\widehat{D}_0,\widehat{D}(f_1),\ldots,\widehat{D}(f_k)$ replaced by $\widehat{M}_0,\widehat{M}(f_1),\ldots,\widehat{M}(f_k)$, if and only if the corresponding matrix $\bar{\Gamma} \Omega$ satisfies \eqref{eq:hat:gamma:k:matrices:in:appendix}.}
\end{proof}

\begin{proposition} \label{prop:permuting:lines:asymptotic:normality}
	{Assume the same settings and conditions as in {Lemma \ref{lem:consistency:k:matrices}}.
		Let
		$(\widehat{\Gamma}_n)_{n \in \mathbb{N}}$ be any sequence of  $p \times p$ matrices so that for any $n \in \mathbb{N}$, $\widehat{\Gamma}_n = \widehat{\Gamma}_n\{\widehat{M}_0,\widehat{M}(f_1),\ldots,\widehat{M}(f_k)\}$  satisfies \eqref{eq:hat:gamma:k:matrices:in:appendix}, with $\widehat{D}_0,\widehat{D}(f_1),\ldots,\widehat{D}(f_k)$ replaced by $\widehat{M}_0,\widehat{M}(f_1),\ldots,\widehat{M}(f_k)$. Then, there exists a sequence of permutation matrices $(P_n)$ and a sequence of diagonal matrices $(D_n)$, with diagonal components in $\{-1,1\}$ so that, with $\check{\Gamma}_n = D_n P_n \widehat{\Gamma}_n$, the sequence $(\check{\Gamma}_n)$ satisfies the conclusions of {Lemma \ref{lem:TCL:k:matrix}}.}
\end{proposition}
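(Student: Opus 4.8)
The plan is to reduce to the latent-field problem via equivariance, reuse the permutation and sign selection already available from Proposition~\ref{prop:permuting:lines:consistency}, and then push the resulting central limit theorem through the fixed linear map $M_{\Omega^{-1}}$. First I would record the equivariance identity $\widehat{M}(f) = \Omega \widehat{D}(f) \Omega^\T$ (and $\widehat{M}_0 = \Omega \widehat{D}_0 \Omega^\T$), which gives that a matrix $\bar{\Gamma}$ maximizes \eqref{eq:hat:gamma:k:matrices:in:appendix} in the $\widehat{M}$-scatters if and only if $\bar{\Gamma}\Omega$ maximizes the same problem in the $\widehat{D}$-scatters: the constraint $\bar{\Gamma}\widehat{M}_0\bar{\Gamma}^\T = I_p$ is equivalent to $(\bar{\Gamma}\Omega)\widehat{D}_0(\bar{\Gamma}\Omega)^\T = I_p$, and each summand $\bar{\gamma}_j^\T\widehat{M}(f_l)\bar{\gamma}_j$ equals $(\Omega^\T\bar{\gamma}_j)^\T\widehat{D}(f_l)(\Omega^\T\bar{\gamma}_j)$, where $\Omega^\T\bar{\gamma}_j$ is the $j$th row of $\bar{\Gamma}\Omega$. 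Writing $\widehat{\Gamma}_n^Z = \widehat{\Gamma}_n\Omega$, the given arbitrary sequence of $\widehat{M}$-maximizers becomes a sequence of $\widehat{D}$-maximizers.

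Next, exactly as in the proof of Proposition~\ref{prop:permuting:lines:consistency}, I would choose for each $n$ a permutation matrix $P_n$ and a sign matrix $D_n$ so that $D_n P_n \widehat{\Gamma}_n^Z \widehat{D}_0^{1/2}$ belongs to the set $E_0$ from the proof of Lemma~\ref{lem:consistency:k:matrices}. Permuting rows and flipping their signs preserves both the constraint and the objective in \eqref{eq:hat:gamma:k:matrices:in:appendix}, so $\check{\Gamma}_n = D_n P_n \widehat{\Gamma}_n$ satisfies $\check{\Gamma}_n\Omega = D_n P_n \widehat{\Gamma}_n^Z$, which is still a $\widehat{D}$-maximizer, and the consistency argument of Lemma~\ref{lem:consistency:k:matrices} gives $\check{\Gamma}_n\Omega \to I_p$ in probability. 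The point I would stress is that the asymptotic linearization of Lemma~\ref{lem:DL:k:matrices} is proved for every maximizer of \eqref{eq:hat:gamma:k:matrices:in:appendix} that converges to $I_p$ in probability, not only for the one singled out in its statement. Hence it applies verbatim to $\check{\Gamma}_n\Omega$, expressing $n^{1/2}\mathrm{vect}(\check{\Gamma}_n\Omega - I_p)$ as $G$ applied to the stacked increment vector with entries $n^{1/2}\mathrm{vect}(\widehat{D}_0 - D_0)$ and $n^{1/2}\mathrm{vect}\{\widehat{D}(f_l) - D(f_l)\}$, where $G = B(A_0,A_1,\ldots,A_k)$, up to $o_p(1)$.

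I would then repeat the argument of Lemma~\ref{lem:TCL:k:matrix} for this sequence: combining the linearization with the joint central limit theorem of Lemma~\ref{lem:TCL:hat:D} for the increments, and passing to subsequences along which the bounded, possibly non-converging covariance matrices converge before applying Slutsky's lemma, yields $d_w[\cdot, \mathcal{N}\{0, G\tilde{V}(f_1,\ldots,f_k)G^\T\}] \to 0$ for $n^{1/2}\mathrm{vect}(\check{\Gamma}_n\Omega - I_p)$. Finally, since $\check{\Gamma}_n - \Omega^{-1} = (\check{\Gamma}_n\Omega - I_p)\Omega^{-1}$ and $\mathrm{vect}(A\Omega^{-1}) = M_{\Omega^{-1}}\mathrm{vect}(A)$, applying the fixed matrix $M_{\Omega^{-1}}$ transports this into asymptotic normality of $n^{1/2}\mathrm{vect}(\check{\Gamma}_n - \Omega^{-1})$ with covariance $F_k = M_{\Omega^{-1}}G\tilde{V}(f_1,\ldots,f_k)G^\T M_{\Omega^{-1}}^\T$, which is the assertion. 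The only genuine obstacle, as opposed to bookkeeping, is justifying that Lemmas~\ref{lem:DL:k:matrices} and \ref{lem:TCL:k:matrix} apply to an arbitrary adjusted maximizer; this I would resolve by observing that their proofs use only that $\check{\Gamma}_n\Omega$ maximizes \eqref{eq:hat:gamma:k:matrices:in:appendix} and converges to $I_p$, both guaranteed by the selection of $P_n$ and $D_n$ above.
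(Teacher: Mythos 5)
Your proposal is correct and follows essentially the same route as the paper: reduce to the latent-field problem via the equivalence between $\widehat{M}$-maximizers $\bar{\Gamma}$ and $\widehat{D}$-maximizers $\bar{\Gamma}\Omega$, select $P_n, D_n$ so that $D_nP_n\widehat{\Gamma}_n\Omega\widehat{D}_0^{1/2}\in E_0$, invoke the consistency and linearization arguments of Lemmas~\ref{lem:consistency:k:matrices} and \ref{lem:DL:k:matrices} for this adjusted maximizer, and transport through $M_{\Omega^{-1}}$ as in Lemma~\ref{lem:TCL:k:matrix}. Your explicit observation that the linearization applies to any maximizer converging to $I_p$ in probability, not just the one exhibited in Lemma~\ref{lem:DL:k:matrices}, is exactly the point the paper leaves implicit.
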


\begin{proof}
	{The proof is the same as that of Proposition \ref{prop:permuting:lines:consistency}.}
\end{proof}

The results of Propositions \ref{prop:consistency:k:matricesb} and \ref{asymp_bssmulti} derive directly from Propositions \ref{prop:permuting:lines:consistency} and \ref{prop:permuting:lines:asymptotic:normality}.

\begin{lemma} \label{lem:mean}
	Let Conditions \ref{cond:minimal:distance} and \ref{cond:bound:covariance} hold.
	Let $f$ satisfy Condition \ref{cond:bound:weight:function}. Let $\bar{X} = n^{-1} \sum_{i=1}^n X( s_i)$. Let
	\[
	\widehat{M}_{\mathrm{st}}(f)
	=
	n^{-1}
	\sum_{i=1}^n
	\sum_{j=1}^n
	f( s_i- s_j)
	\{X( s_i) - \bar{X}\}
	\{X( s_j) - \bar{X}\}^\T.
	\]
	Then as $n \to \infty$
	\[
	\widehat{M}_{\mathrm{st}}(f)
	-
	\widehat{M}(f)
	=
	O_p
	\left(
	n^{-1}
	\right).
	\]
\end{lemma}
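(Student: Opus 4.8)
The plan is to expand the centred estimator around $\bar{X}$ and to bound each resulting cross term, exploiting that $\bar{X}$ and certain weighted averages of the $X(s_i)$ are all $O_p(n^{-1/2})$ while the row sums of $f$ stay bounded. First I would substitute $X(s_i)-\bar X$ and $X(s_j)-\bar X$ into the double sum and expand, writing $w_i = \sum_{j=1}^n f(s_i-s_j)$, $v_j = \sum_{i=1}^n f(s_i-s_j)$, $S = \sum_{i,j=1}^n f(s_i-s_j)$, $U = n^{-1}\sum_{i=1}^n w_i X(s_i)$ and $V = n^{-1}\sum_{j=1}^n v_j X(s_j)$. This gives the identity
\[
\widehat{M}_{\mathrm{st}}(f) - \widehat{M}(f)
= - U\bar X^\T - \bar X V^\T + (n^{-1}S)\,\bar X\bar X^\T,
\]
so it suffices to control $\bar X$, $U$, $V$ and $n^{-1}S$.

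The three rate facts are as follows. First, each row sum satisfies $|w_i|\le B$ uniformly in $i$ and $n$: this follows from Condition~\ref{cond:bound:weight:function} together with Lemma~4 of \cite{furrer16asymptotic}, exactly as in the proof of Proposition~\ref{prop:consistency}; hence $|S|\le nB$ and $|n^{-1}S|\le B$, and likewise $|v_j|\le B$. Second, since $\mathrm{E}\{X(s_i)\}=0$ we have $\mathrm{E}(\bar X)=0$ and, for any two components,
\[
\left| \mathrm{cov}(\bar X_k,\bar X_l) \right|
= n^{-2}\left| \sum_{i,i'=1}^n \mathrm{cov}\{X_k(s_i),X_l(s_{i'})\} \right|
\le n^{-2}\sum_{i,i'=1}^n \frac{C'}{1+|s_i-s_{i'}|^{d+\alpha}}
\le C'' n^{-1},
\]
where the covariance bound is \eqref{eq:bound:cov:fun:X} and the final inequality uses the uniform summability of such decaying weights from Lemma~4 of \cite{furrer16asymptotic}; by Chebyshev's inequality $\bar X = O_p(n^{-1/2})$. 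Third, the same computation with the bounded weights $w_i$ inserted gives $\mathrm{var}(U_k)\le n^{-2}B^2\sum_{i,i'=1}^n C'/(1+|s_i-s_{i'}|^{d+\alpha}) = O(n^{-1})$, so $U = O_p(n^{-1/2})$, and identically $V = O_p(n^{-1/2})$.

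Combining these, each term of the displayed identity is a product of two $O_p(n^{-1/2})$ factors times an $O(1)$ factor, whence $U\bar X^\T$, $\bar X V^\T$ and $(n^{-1}S)\bar X\bar X^\T$ are all $O_p(n^{-1})$, which yields the claim. No step presents a serious obstacle: the only genuine content lies in the variance bounds of the second and third facts, and these rest entirely on the polynomial decay of the covariances (Condition~\ref{cond:bound:covariance}) and the uniform summability of such decaying weights (Lemma~4 of \cite{furrer16asymptotic}), both already used in Lemma~\ref{lem:bounded:covy} and Proposition~\ref{prop:consistency}; the remainder is elementary algebra.
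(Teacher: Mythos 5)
Your proposal is correct and follows essentially the same route as the paper's proof: the paper expands the difference entrywise into exactly your three terms (its $\epsilon_q$ is your $U_q$), establishes $\bar{X}=O_p(n^{-1/2})$ and the weighted averages $=O_p(n^{-1/2})$ via the same variance computations using \eqref{eq:bound:cov:fun:X} and Lemma~4 of \cite{furrer16asymptotic}, and concludes identically. Writing the decomposition in matrix form rather than componentwise is only a cosmetic difference.
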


\begin{proof}
	Let ${a},l \in \{1,\ldots,p\}$ and let $f_{i,j} = f( s_i -  s_j)$. We have
	\begin{align} \label{eq:for:proof:barX}
	\widehat{M}_{\mathrm{st}}(f)_{{a},l}
	-
	\widehat{M}(f)_{{a},l}
	&=
	n^{-1}
	\sum_{i=1}^n
	\sum_{j=1}^n
	f_{i,j}
	\{
	X_{a}( s_i) X_l( s_j)
	- X_{a}( s_i) \bar{X}_l
	- \bar{X}_{a} X_l( s_j)
	+  \bar{X}_{a} \bar{X}_l
	\} \nonumber \\
	& ~ ~ - n^{-1}
	\sum_{i=1}^n
	\sum_{j=1}^n
	f_{i,j}
	X_{a}( s_i) X_l( s_j) \nonumber \\
	& =
	-\bar{X}_l \{
	n^{-1}
	\sum_{i=1}^n
	\sum_{j=1}^n
	f_{i,j} X_{a}( s_i)
	\}
	-\bar{X}_{a} \{
	n^{-1}
	\sum_{i=1}^n
	\sum_{j=1}^n
	f_{i,j} X_l( s_j)
	\}\nonumber \\
	& ~ ~
	+ 
	n^{-1}
	\sum_{i=1}^n
	\sum_{j=1}^n
	f_{i,j} \bar{X}_{a} \bar{X}_l.
	\end{align}
	Now, for $q=1,\ldots,p$, ${E}(\bar{X}_q) = 0$ and
	\[
	\mathrm{var} (\bar{X}_q) 
	= 
	\frac{1}{n^2}
	\sum_{i=1}^n
	\sum_{j=1}^n
	\mathrm{cov}\{ X_q( s_i),X_q( s_j) \}.
	\]
	Also, $\max_{i=1,\ldots,n} \sum_{j=1}^n | \mathrm{cov}\{X_q( s_i),X_q( s_j)\} | $ is bounded because of \eqref{eq:bound:cov:fun:X} and Lemma 4 in \cite{furrer16asymptotic}. Hence $\mathrm{var}(\bar{X}_q) =O(1/n)  $ and so $\bar{X}_q = O_p(n^{-1/2})$.
	
	Also, let 
	\[
	\epsilon_q
	=
	n^{-1}
	\sum_{i=1}^n
	\sum_{j=1}^n
	f_{i,j} X_q( s_i). 
	\]
	Then ${E}(\epsilon_q) = 0$ and 
	\[
	\mathrm{var}(\epsilon_q)
	=
	n^{-2}\sum_{i=1}^n
	\sum_{{b}=1}^n
	(\sum_{j=1}^n f_{i,j}
	)
	(
	\sum_{j=1}^n f_{{b},j}
	)
	\mathrm{cov}\{X_q( s_i),X_q( s_{b}) \}.
	\]
	From Condition \ref{cond:bound:weight:function} and Lemma 4 in \cite{furrer16asymptotic}, there exists a finite constant $H$ so that
	\[
	\max_{i=1,\ldots,n}
	\sum_{j=1}^n | f_{i,j} |
	\leq H.
	\]
	Hence
	\begin{align*}
	\mathrm{var}(\epsilon_q)
	& \leq
	H^2 n^{-2}
	\sum_{i=1}^n
	\sum_{{b}=1}^n
	\left|
	\mathrm{cov}\{ X_q( s_i),X_q( s_{b}) \}
	\right| \\
	& = O \left( n^{-1} \right)
	\end{align*}
	as before. Hence $\epsilon_q = O_p(n^{-1/2})$.
	Also, we have seen above that 
	\[
	n^{-1}
	\sum_{i=1}^n
	\sum_{j=1}^n
	|f_{i,j}|
	\]
	is bounded.
	Hence, from \eqref{eq:for:proof:barX}, we conclude the proof of the lemma.
\end{proof}

\section{Simulation complements}
\subsection{Asymptotic approximate distribution of the unmixing matrix estimator}
In Fig.~\ref{fig:simulation_1_1} of Section~\ref{subsec:simu_1}, we used only the expected value of the {asymptotic approximation}. In Fig.~\ref{fig:simulation_1_2} we have plotted the estimated densities of $n (p - 1) \MDI(\widehat{{\Gamma}})^2$, in solid lines, against the {densities of the corresponding asymptotic approximations}, in dashed lines, for all local covariance matrices and a few selected sample sizes. The density functions of the {asymptotic approximations} are estimated from a sample of 100,000 random variables drawn from the corresponding distributions. Overall, the two densities fit each other rather well, especially for the local covariance matrices involving the ring kernel. This shows that the asymptotic distribution of $n (p - 1) \MDI(\widehat{{\Gamma}})^2$ is a good approximation of the true distribution already for small sample sizes.

\begin{figure}[t]
	\centering
	\includegraphics[width=\textwidth]{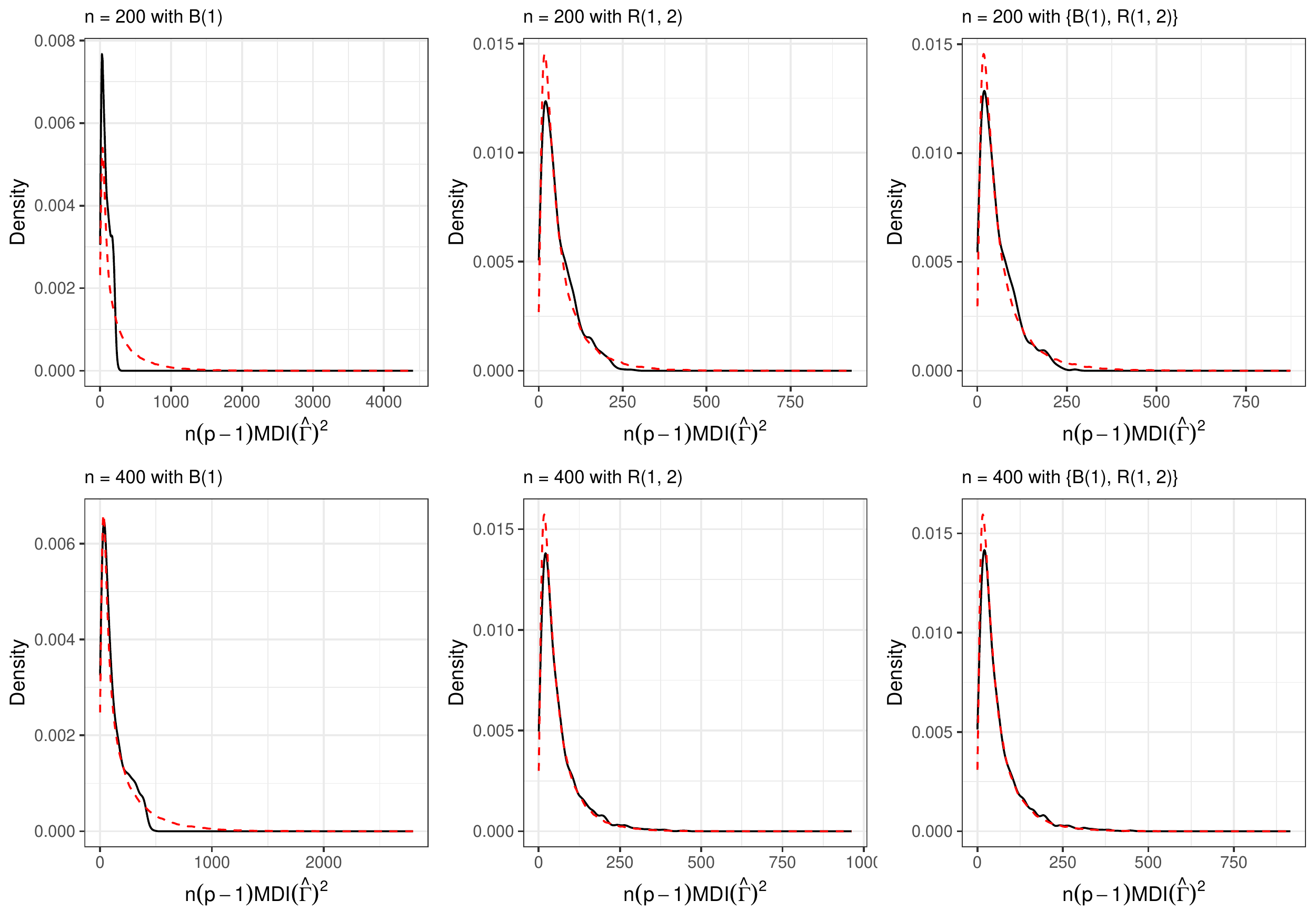}
	\caption{The density of $n (p - 1) \MDI(\widehat{{\Gamma}})^2$ over 2000 replications, in solid lines, against the density of its {asymptotic approximation}, in dashed lines, for different combinations of sample size and local covariance matrices.}
	\label{fig:simulation_1_2}
\end{figure}

\subsection{A simulation study on individual component estimation accuracy}
\label{SuppSimuComp}

In this simulation, we investigate the estimation accuracies of the individual latent fields under the spatial blind source separation model. 

We use the same setting as in the first simulation study in Section \ref{Simu}. That is, let $X({s}) = {\Omega} {Z}({s}) $ where each of the three independent latent fields has a Mat\'ern covariance function with shape and range parameters $(\kappa,\phi) \in \{(6,\text{1$\cdot$2}),(1,\text{1$\cdot$5}),(\text{0$\cdot$25},1)\}$, illustrated in the left panel in Fig.~\ref{fig:cov_functions}. The location pattern is taken to be the same growing pattern of nested squares depicted on the left of Fig.~\ref{fig:grids}.

To quantify the estimation accuracies of the individual components, we use the same strategy as in the real data example of Section \ref{Example}. Let $ \widehat{Z}(s) = \{\widehat{Z}_1(s), \widehat{Z}_2(s), \widehat{Z}_3(s)\}^\T = \widehat{ \Gamma} X({s}) $ contain the estimated components for a single repetition of the simulation. For each of the three true sources, $ j = 1, 2, 3 $, we record the maximum absolute sample correlation between $Z_j(s)$ and $\widehat{Z}_l(s) $ over $ l = 1, 2, 3 $. The larger the maximum absolute correlation, the better the source field $ j $ was estimated.

Due to the affine equivariance of the estimators, the estimated components $ \widehat{ \Gamma} X({s}) $ are invariant to the choice of $ \Omega $, up to their signs and order. 
More precisely, let $\widehat{\Gamma}(I_p)$ be computed from $\{Z(s_i)\}_{i=1,\ldots,n}$ according to \eqref{hat:gamma:k:matrices}, let $\widehat{Z}_{I_p}(s) = \widehat{\Gamma}(I_p) Z(s)$ and recall that $\widehat{\Gamma}$ is computed from $\{X(s_i)\}_{i=1,\ldots,n}$ according to \eqref{hat:gamma:k:matrices}. 
Then we have
\[
\widehat{Z}(s)
=
\widehat{\Gamma} X(s)
=
\widehat{\Gamma} \Omega Z(s)
=
\widehat{\Gamma}(I_p) \Omega^{-1} \Omega Z(s)
=
\widehat{\Gamma}(I_p) Z(s)
=
\widehat{Z}_{I_p}(s),
\]
up to order and signs of the components.
Therefore, it is without loss of generality that we may again assume that $ \Omega  = I_3 $. Any other choice of $ \Omega $ would lead to exactly the same results.

The mean maximum source-wise absolute correlations over 2000 repetitions are shown in 
Fig.~\ref{fig:corSimu} for a range of sample sizes. We have used two choices of kernels, $ R(1, 2) $ and $ R(7, 9) $. The first one was chosen due to its good performance in the main simulation study and the second one to see how the individual components are estimated under a bad choice of kernel. 

\begin{figure}[t]
	\centering
	\includegraphics[width=1.0\textwidth]{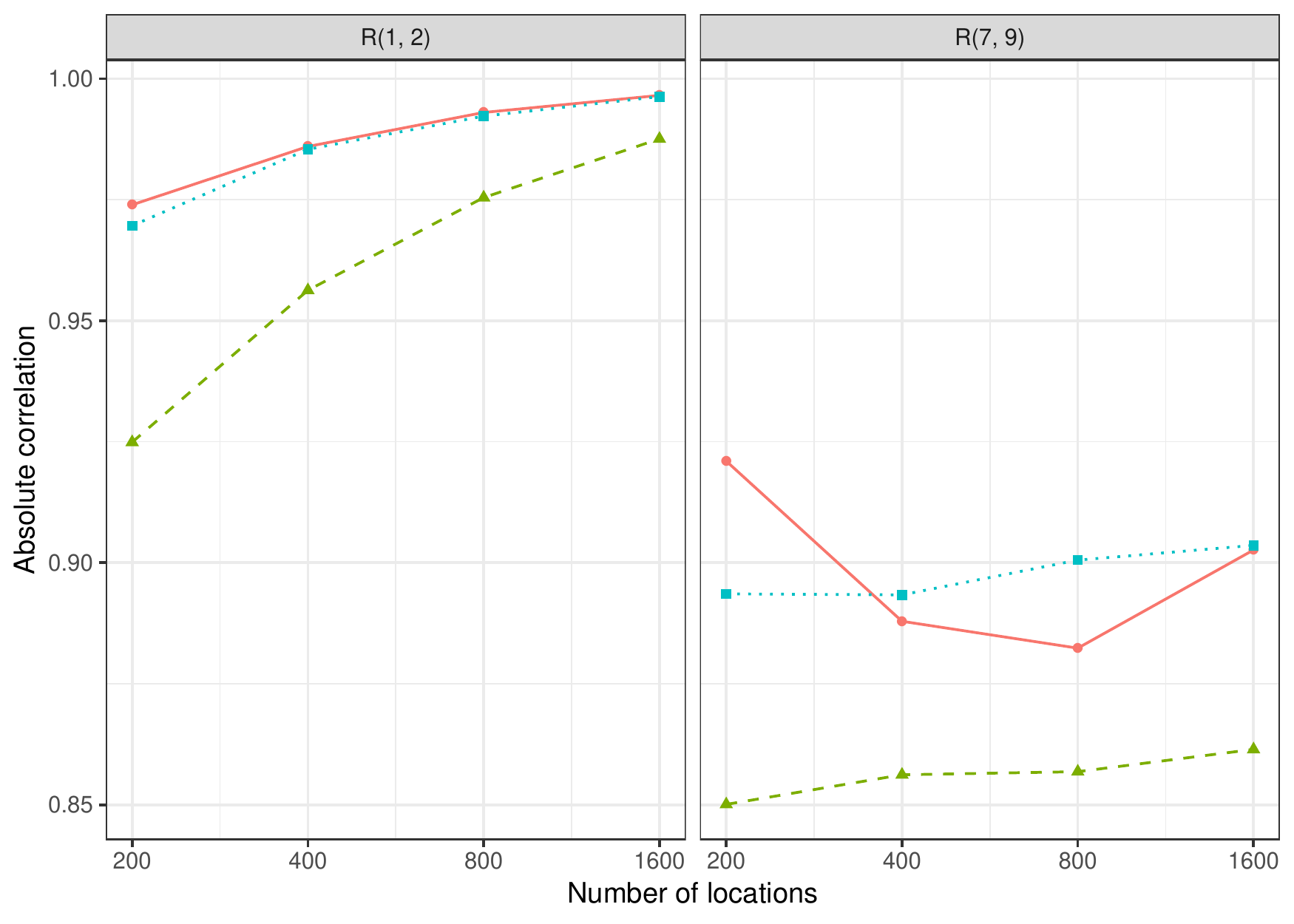}
	\caption{Maximal absolute correlations for {the first (solid red line), second (dashed green line) and third (dotted blue line) source fields} under two kernels and a range of sample sizes.}
	\label{fig:corSimu}
\end{figure}

The results indicate that the first and third sources are estimated almost equally well, but that the second source is somewhat more difficult to estimate. We postulate that this is caused by its corresponding covariance function being the middle one in the left panel in 
Fig.~\ref{fig:cov_functions}. That is, the first and third sources are unlikely to be mixed with each other due to their extremal covariance functions. The second source, on the other hand, is between the other two and can be mistaken for both the first and the third source. We also note in the right panel of {Fig.~\ref{fig:corSimu}} that using an inferior choice of a kernel leads to an overall worse estimation accuracy for all sources.
Finally, on the left-hand side of Fig.~\ref{fig:corSimu}, we observe a convergence to one of the maximum absolute correlation, under the appropriate choice of kernel $ R(1, 2) $.

\section{Further details concerning the real data example}
\label{section:details:real:data}

Figure~\ref{fig:KolaInfo} describes the sampling area from the Kola data and Figures~\ref{fig:B50}-\ref{fig:B100} visualize the components discussed in Section~\ref{Example}.  

\begin{figure}[t]
	\centering
	\includegraphics[width=0.6\textwidth]{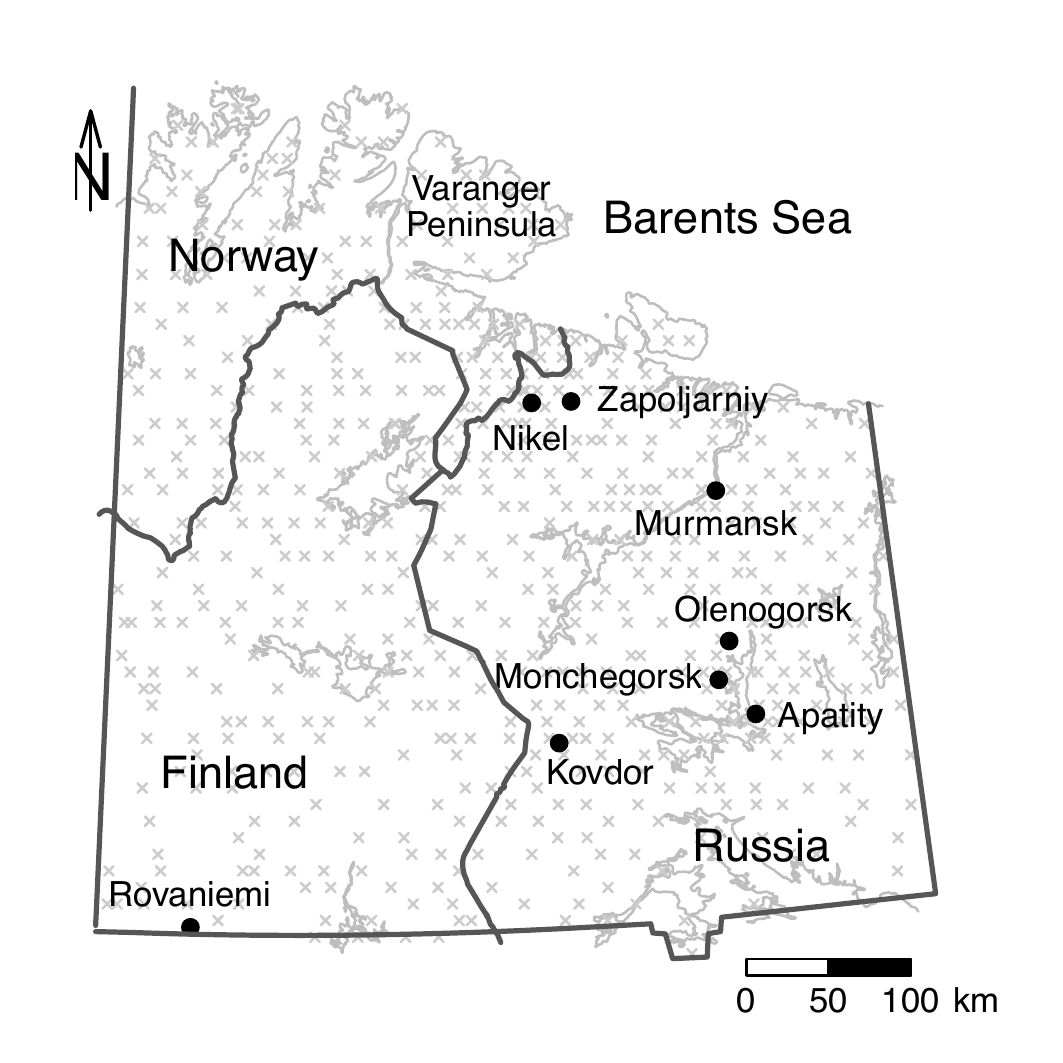}
	\caption{Sampling area of the Kola data. The gray crosses indicate sampling locations. }
	\label{fig:KolaInfo}
\end{figure}

\begin{figure}[t]
	\centering
	\includegraphics[width=0.8\textwidth]{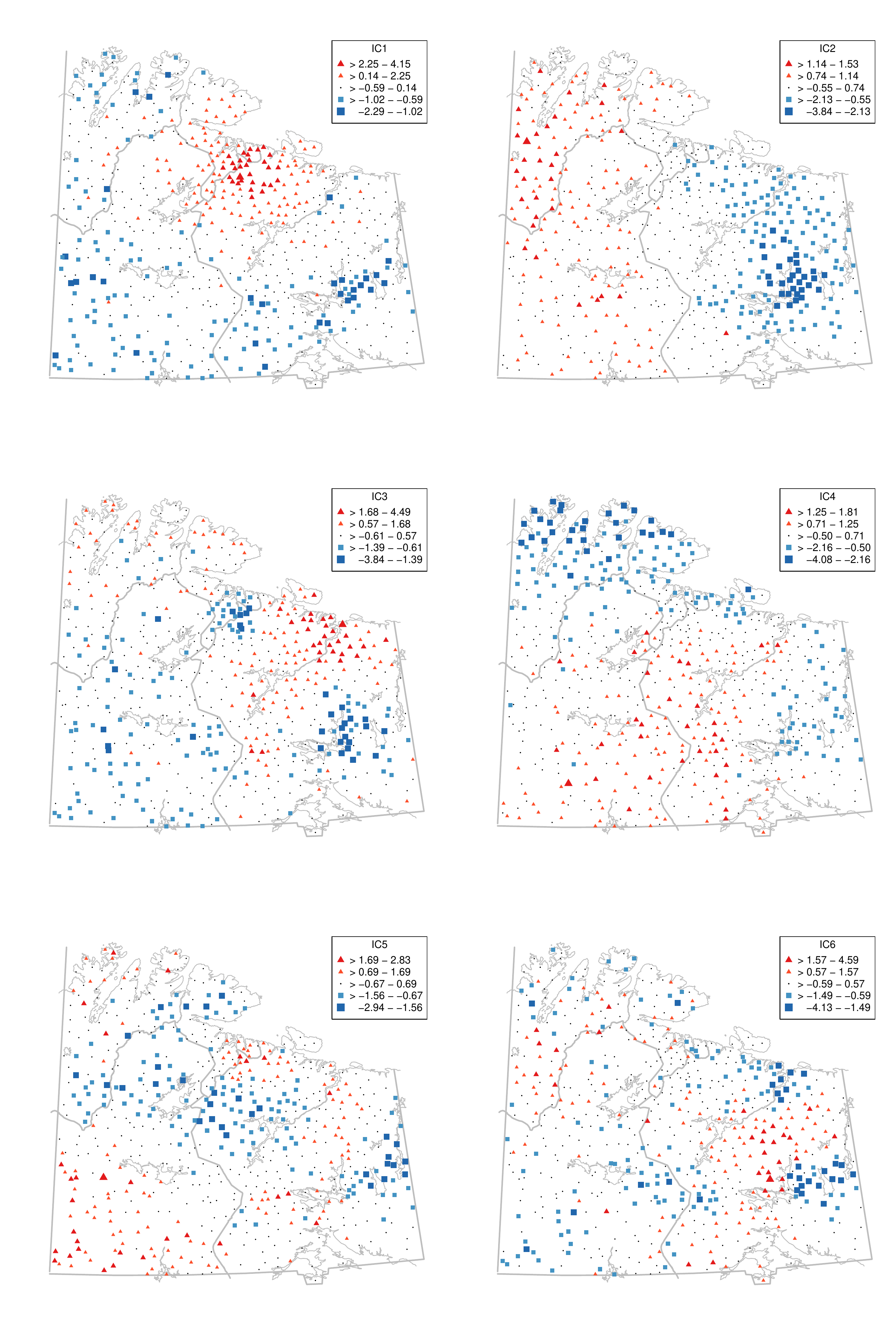}
	\caption{The six most interesting components  using the covariance matrix and $B(50)$. Used as {gold} standard following \citet{NordhausenOjaFilzmoserReimann2015}.}
	\label{fig:B50}
\end{figure}

\begin{figure}[t]
	\centering
	\includegraphics[width=0.8\textwidth]{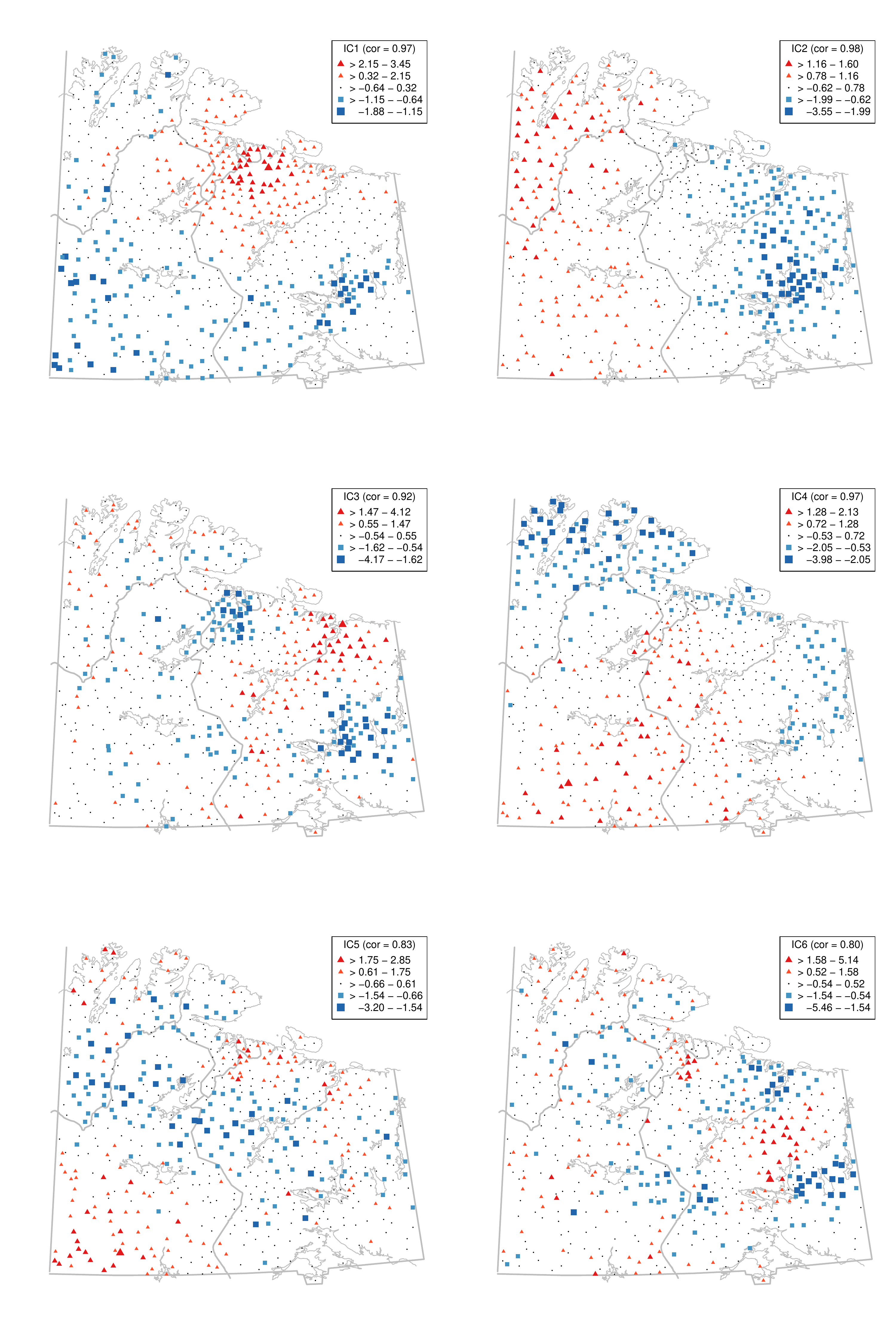}
	\caption{The six components with highest absolute correlations with the components from Figure~\ref{fig:B50} when jointly diagonalising the covariance matrix and the ring kernels $R(0,25)$, $R(25,50)$, $R(50,75)$ , $R(75,100)$. The correlations to the corresponding components of Fig.~\ref{fig:B50} are given in the legends.}
	\label{fig:JD}
\end{figure}

\begin{figure}[t]
	\centering
	\includegraphics[width=0.8\textwidth]{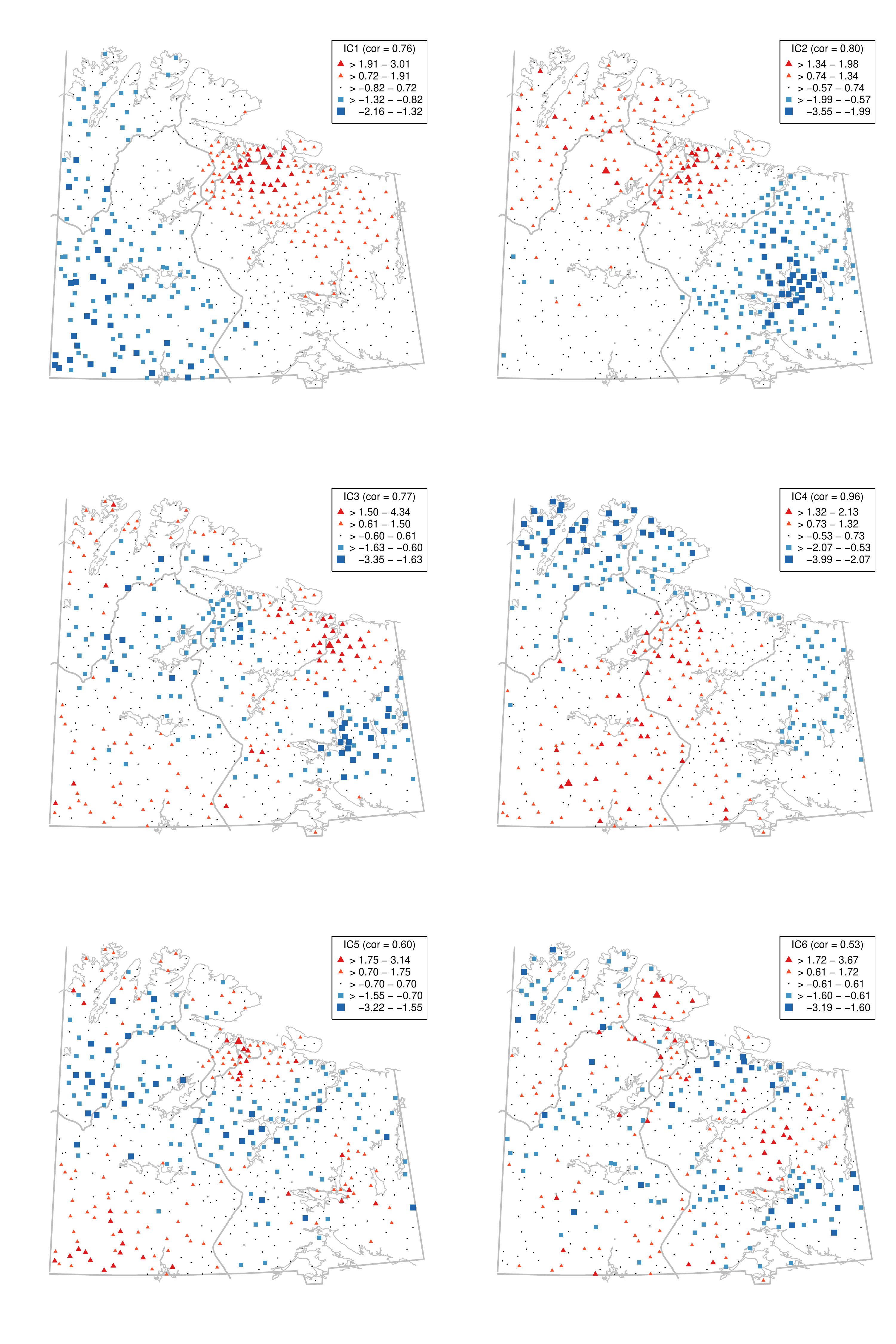}
	\caption{The six components with highest absolute correlations with the components from Fig.~\ref{fig:B50} when jointly diagonalising the covariance matrix and  $B(100)$. The correlations to the corresponding components of Fig.~\ref{fig:B50} are given in the legends.}
	\label{fig:B100}
\end{figure}

\bibliographystyle{biometrika}
\bibliography{SpatialBSS_references}

\end{document}